\documentclass[reqno]{amsart}
\usepackage[colorlinks=true]{hyperref}
\usepackage{amsmath}
  \usepackage{paralist}
  \usepackage{amssymb}
 \usepackage{amsthm}

\AtBeginDocument{{\noindent\small \emph{}} \vspace{8mm}}
\begin{document}
\title[\hfil Standing waves to upper critical Choquard equation]
{Standing waves to upper critical Choquard equation with a local
perturbation: multiplicity, qualitative properties and stability}

\author[X. F. Li]
{Xinfu Li}

\address{Xinfu Li \newline
School of Science, Tianjin University of Commerce, Tianjin 300134,
Peoples's Republic of China} \email{lxylxf@tjcu.edu.cn}

\subjclass[2020]{35J20, 35B06, 35B33, 35B35.} \keywords{normalized
solutions; multiplicity; symmetry; stability; upper critical
exponent.}

\begin{abstract}
In this paper, we consider the upper critical Choquard equation with
a local perturbation
\begin{equation*}
\begin{cases}
-\Delta u=\lambda u+(I_\alpha\ast|u|^{p})|u|^{p-2}u+\mu|u|^{q-2}u,\
x\in
\mathbb{R}^{N},\\
u\in H^1(\mathbb{R}^N),\ \int_{\mathbb{R}^N}|u|^2=a,
\end{cases}
\end{equation*}
where $N\geq 3$, $\mu>0$, $a>0$, $\lambda\in \mathbb{R}$, $\alpha\in
(0,N)$, $p=\bar{p}:=\frac{N+\alpha}{N-2}$, $q\in (2,2+\frac{4}{N})$
and $I_\alpha=\frac{C}{|x|^{N-\alpha}}$ with $C>0$. When $\mu
a^{\frac{q(1-\gamma_q)}{2}}\leq
(2K)^{\frac{q\gamma_q-2\bar{p}}{2(\bar{p}-1)}}$ with
$\gamma_q=\frac{N}{2}-\frac{N}{q}$ and  $K$ being some positive
constant, we prove

(1) Existence and orbital stability of the ground states.

(2) Existence, positivity, radial symmetry, exponential decay and
orbital instability of the ``second class' solutions.

This paper generalized  and improved parts of the results obtained
in \cite{{JEANJEAN-JENDREJ},{Jeanjean-Le},{Soave JFA},{Wei-Wu 2021}}
to the Schr\"{o}dinger equation.
\end{abstract}

\maketitle \numberwithin{equation}{section}
\newtheorem{theorem}{Theorem}[section]
\newtheorem{lemma}[theorem]{Lemma}
\newtheorem{definition}[theorem]{Definition}
\newtheorem{remark}[theorem]{Remark}
\newtheorem{proposition}[theorem]{Proposition}
\newtheorem{corollary}[theorem]{Corollary}
\allowdisplaybreaks

\section{Introduction and main results}

\setcounter{section}{1} \setcounter{equation}{0}

In this paper, we study standing waves of prescribed mass to the
Choquard equation with a local perturbation
\begin{equation}\label{e1.2}
i\partial_t\psi+\Delta
\psi+(I_\alpha\ast|\psi|^{p})|\psi|^{p-2}\psi+\mu|\psi|^{q-2}\psi=0,\
(t,x)\in \mathbb{R}\times \mathbb{R}^N,
\end{equation}
where $N\geq 3$, $\psi:\mathbb{R}\times \mathbb{R}^N\to \mathbb{C}$,
$\mu>0$, $\alpha\in (0,N)$,  $I_{\alpha}$ is the Riesz potential
defined for every $x\in \mathbb{R}^N \setminus \{0\}$ by
\begin{equation}\label{e1.3}
I_{\alpha}(x):=\frac{A_\alpha(N)}{|x|^{N-\alpha}},\
A_\alpha(N):=\frac{\Gamma(\frac{N-\alpha}{2})}{\Gamma(\frac{\alpha}{2})\pi^{N/2}2^\alpha}
\end{equation}
with $\Gamma$ denoting the Gamma function (see \cite{Riesz1949AM},
P.19), $p$ and $q$ will be defined later.

The equation (\ref{e1.2}) has several physical origins. When $N =
3$, $p = 2$, $\alpha = 2$ and $\mu=0$, (\ref{e1.2}) was investigated
by Pekar in \cite{Pekar 1954} to study the quantum theory of a
polaron at rest. In \cite{Lieb 1977}, Choquard applied it as an
approximation to Hartree-Fock theory of one component plasma. It
also arises in multiple particles systems \cite{Gross 1996} and
quantum mechanics \cite{Penrose 1996}. When $p = 2$, equation
(\ref{e1.2}) reduces to the well-known Hartree equation. The
Choquard equation (\ref{e1.2}) with or without a local perturbation
has attracted much attention nowadays, see \cite{{Bonanno-Avenia
2014},{Cazenave 2003},{Chen-Guo 2007},{Feng-Yuan 2015},{Liu-Shi
2018},{Miao-Xu 2010}} for the local existence, global existence,
blow up and more in general dynamical properties.

Standing waves to (\ref{e1.2}) are solutions of the form $\psi(t, x)
=e^{-i\lambda t}u(x)$, where $\lambda\in \mathbb{R}$ and
$u:\mathbb{R}^N\to \mathbb{C}$. Then $u$ satisfies the equation
\begin{equation}\label{e1.4}
-\Delta u=\lambda u+(I_\alpha\ast |u|^{p})|u|^{p-2}u+\mu|u|^{q-2}u,\
x\in \mathbb{R}^{N}.
\end{equation}
When looking for solutions to (\ref{e1.4}) one choice is to fix
$\lambda<0$ and to search for solutions to (\ref{e1.4}) as critical
points of the action functional
\begin{equation*}
J(u):=\int_{\mathbb{R}^N}\left(\frac{1}{2}|\nabla
u|^2-\frac{\lambda}{2}|u|^2-\frac{1}{2p}(I_\alpha\ast
|u|^{p})|u|^{p}-\frac{\mu}{q}|u|^q\right)dx,
\end{equation*}
see for example \cite{{Li-Ma 2020},{Li-Ma-Zhang 2019},{Luo
2020},{Moroz-Schaftingen 2017}} and the references therein. Another
choice is to fix the $L^2$-norm of the unknown $u$, that is, to
consider the problem
\begin{equation}\label{e1.5}
\begin{cases}
-\Delta u=\lambda u+(I_\alpha\ast|u|^{p})|u|^{p-2}u+\mu|u|^{q-2}u,\
x\in
\mathbb{R}^{N},\\
u\in H^1(\mathbb{R}^N),\ \int_{\mathbb{R}^N}|u|^2=a
\end{cases}
\end{equation}
with fixed $a>0$ and unknown $\lambda\in \mathbb{R}$. In this
direction, define on $H^1(\mathbb{R}^N)$ the energy functional
\begin{equation*}
E(u):=\frac{1}{2}\int_{\mathbb{R}^N}|\nabla
u|^2-\frac{1}{2p}\int_{\mathbb{R}^N}(I_\alpha\ast|u|^{p})|u|^{p}
-\frac{\mu}{q}\int_{\mathbb{R}^N}|u|^{q}.
\end{equation*}
It is standard to check that $E\in C^1$ under some assumptions on
$p$ and $q$, and a critical point of $E$ constrained to
\begin{equation*}
S_a:=\left\{u\in
H^1(\mathbb{R}^N):\int_{\mathbb{R}^N}|u|^2=a\right\}
\end{equation*}
gives rise to a solution to (\ref{e1.5}). Such solution is usually
called a normalized solution of (\ref{e1.4}) on $S_a$, which is the
aim of this paper.

For future reference, we recall

\begin{definition}\label{def1.1}
We say that $u$ is a normalized ground state to (\ref{e1.4}) on
$S_a$ if
\begin{equation*}
E(u)=c_a^{g}:=\inf\{E(v):v\in S_a,\ (E|_{S_a})'(v)=0\}.
\end{equation*}
The set of the normalized ground states will be denoted by
$\mathcal{G}_a$.
\end{definition}

\begin{definition}\label{def1.2}
$\mathcal{G}_a$ is orbitally stable if for every $\epsilon>0$ there
exists $\delta>0$ such that, for any $\psi_0\in H^1(\mathbb{R}^N)$
with $\inf_{v\in \mathcal{G}_a}\|\psi_0-v\|_{H^1}<\delta$, we have
\begin{equation*}
\inf_{v\in \mathcal{G}_a}\|\psi(t,\cdot)-v\|_{H^1}<\epsilon\ \
\mathrm{for\ any\ }t>0,
\end{equation*}
where $\psi(t,x)$ denotes the solution to (\ref{e1.2}) with initial
value $\psi_0$.

A standing wave $e^{-i\lambda t}u$ is strongly unstable if for every
$\epsilon>0$ there exists $\psi_0\in H^1(\mathbb{R}^N)$ such that
$\|\psi_0-u\|_{H^1}<\epsilon$, and $\psi(t,x)$ blows up in finite
time.
\end{definition}

When studying normalized solutions to the Choquard equation
\begin{equation}\label{e1.12}
-\Delta u=\lambda u+(I_\alpha\ast|u|^p)|u|^{p-2}u,\ x\in
\mathbb{R}^{N},
\end{equation}
the $L^2$-critical exponent $p^*:=1+\frac{2+\alpha}{N}$, the
Hardy-Littlewood-Sobolev upper critical exponent
$\bar{p}:=\frac{N+\alpha}{N-2}$ and lower critical exponent
$\underline{p}:=\frac{N+\alpha}{N}$ play an important role. For
$\underline{p}<p<p^*$, the existence of normalized ground state to
(\ref{e1.12}) was studied by Cazenave and Lions \cite{Cazenave-Lions
1982} and Ye \cite{Ye 2016} by considering the minimizer of $E$
constrained on $S_a$. \cite{Cazenave-Lions 1982} also studied the
orbital stability of the normalized ground states set by using the
concentration compactness principle. For $p^*<p<\bar{p}$, the
functional $E$ is no longer bounded from below on $S_a$.  By
considering the minimizer of $E$ constrained on the Poho\v{z}aev
set, Luo \cite{Luo 2019} obtained the existence and instability of
normalized ground state to (\ref{e1.12}). For $p=p^*$, by scaling
invariance, the result is delicate, see \cite{Cazenave-Lions 1982}
and \cite{Ye 2016} for details. See
\cite{{Bartsch-Liu-Liu_2020},{Li-Ye_JMP_2014},{Yuan-Chen-Tang_2020}}
for studies to Choquard equation with general nonlinearity. For
(\ref{e1.12}) with $p=\bar{p}$, Moroz and Van Schaftingen
\cite{Moroz-Schaftingen JFA 2013} showed that (\ref{e1.12}) has no
solutions in $H^1(\mathbb{R}^N)$ for fixed $\lambda<0$. While Gao
and Yang \cite{Gao-Yang-1} obtained the solution to the equation
\begin{equation}\label{e1.13}
-\Delta u=(I_\alpha\ast|u|^{\bar{p}})|u|^{\bar{p}-2}u,\ x\in
\mathbb{R}^N
\end{equation}
in $D^{1,2}(\mathbb{R}^N)$. So it is interesting to study the
 normalized solutions to (\ref{e1.12}) with $p=\bar{p}$
under a local perturbation $\mu|u|^{q-2}u$, namely equation
(\ref{e1.5}). In a recent paper, Li \cite{Li-2021} considered the
existence and symmetry of solutions to (\ref{e1.5}) with $p=\bar{p}$
and $2+\frac{4}{N}\leq q<2^*:=\frac{2N}{N-2}$. Note that
$2+\frac{4}{N}$ is the $L^2$-critical exponent in studying
normalized solutions to the Schr\"{o}dinger equation
\begin{equation*}
-\Delta u=\lambda u+|u|^{q-2}u,\ x\in \mathbb{R}^{N}.
\end{equation*}
In this paper, we consider (\ref{e1.5}) with $p=\bar{p}$ and
$2<q<2+\frac{4}{N}$. This paper is motivated by
\cite{{JEANJEAN-JENDREJ},{Jeanjean-Le},{Soave JFA},{Wei-Wu 2021}}
which considered normalized solutions to the Schr\"{o}dinger
equation with mixed nonlinearities
\begin{equation}\label{e7.28}
-\Delta u=\lambda u+|u|^{2^*-2}u+\mu|u|^{q-2}u,\ x\in
\mathbb{R}^{N}.
\end{equation}
We should point out that Liu and Shi \cite{Liu-Shi 2018} studied the
existence and orbital stability of ground states to (\ref{e1.5})
with $p=p^*$ and $2<q<2+\frac{4}{N}$.

Before stating the main results of this paper, we make some
notations. In the following, we assume
$p=\bar{p}:=\frac{N+\alpha}{N-2}$ in (\ref{e1.5}). Set
\begin{equation}\label{e3.14}
\begin{split}
S_\alpha:&=\inf_{ u\in
D^{1,2}(\mathbb{R}^N)\setminus\{0\}}\frac{\int_{\mathbb{R}^N}|\nabla
u|^2}{\left(\int_{\mathbb{R}^N}\left(I_{\alpha}\ast
|u|^{\bar{p}}\right)|u|^{\bar{p}}\right)^{{1}/{\bar{p}}}},
\end{split}
\end{equation}
\begin{equation}\label{e1.14}
K:=
\frac{2\bar{p}-q\gamma_q}{2\bar{p}(2-q\gamma_q)S_\alpha^{\bar{p}}}
\left(\frac{\bar{p}(2-q\gamma_q)C_{N,q}^qS_\alpha^{\bar{p}}}{q(\bar{p}-1)}\right)^{\frac{2\bar{p}-2}{2\bar{p}-q\gamma_q}}
\end{equation}
with $\gamma_q:=\frac{N}{2}-\frac{N}{q}$ and $C_{N,q}$ defined in
Lemma \ref{lem2.5},
\begin{equation}\label{e**}
\rho_0:=\left(\frac{\bar{p}(2-q\gamma_q)S_\alpha^{\bar{p}}}{2\bar{p}-q\gamma_q}\right)^{\frac{1}{\bar{p}-1}},
\end{equation}
\begin{equation*}
B_{\rho_0}:=\{u\in H^1(\mathbb{R}^N):\|\nabla u\|_2^2<\rho_0\},\ \ \
V_a:=S_a\cap B_{\rho_0},\ \ m_a:=\inf_{u\in V_a}E(u).
\end{equation*}

Now we state the first two main results of this paper.

\begin{theorem}\label{thm6.1}
Let $N\geq 3$, $\alpha\in (0,N)$, $2<q<2+\frac{4}{N}$, $p=\bar{p}$,
$\mu>0$, $a>0$, $\mu a^{\frac{q(1-\gamma_q)}{2}}\leq
(2K)^{\frac{q\gamma_q-2\bar{p}}{2(\bar{p}-1)}}$. Then

(1) $E|_{S_a}$ has a critical point $\tilde{u}$ at negative level
$m_a<0$ which is an interior local minimizer of $E$ on the set
$V_a$.

(2) $m_a=c_a^g$ (that is, $\tilde{u}$ is a ground state to
(\ref{e1.5})), and any other ground state to (\ref{e1.5}) is a local
minimizer of $E$ on $V_a$.

(3) $\mathcal{G}_a$ is compact, up to translation.

(4) $c_a^g$ is reached by a positive and  radially symmetric
non-increasing function.

(5) For any $u\in \mathcal{G}_a$, there exists $\lambda<0$ such that
$u$ satisfies (\ref{e1.5}).
\end{theorem}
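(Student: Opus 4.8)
The plan is to run the fibering/local-minimization scheme that is by now standard for mass-subcritical-type perturbations of a (mass-)supercritical or critical leading term. First I would analyze the function $g(t):=E(t\star u)$, where $t\star u(x):=t^{N/2}u(tx)$ preserves the $L^2$-norm, so that $\|\nabla(t\star u)\|_2^2=t^2\|\nabla u\|_2^2$, $\int(I_\alpha*|t\star u|^{\bar p})|t\star u|^{\bar p}=t^{2\bar p}\int(I_\alpha*|u|^{\bar p})|u|^{\bar p}$ (using $2\bar p=2^*\cdot\frac{N+\alpha}{N}$, equivalently $\bar p$ being the HLS-upper-critical exponent so the term scales like the Sobolev term), and $\int|t\star u|^q=t^{q\gamma_q}\int|u|^q$. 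Using the Sobolev-type inequality behind $S_\alpha$ in \eqref{e3.14} and the Gagliardo–Nirenberg inequality of Lemma \ref{lem2.5} with constant $C_{N,q}$, I would bound $E(u)$ from below on $S_a$ by a function of $\rho:=\|\nabla u\|_2^2$ of the form $h(\rho)=\tfrac12\rho-\tfrac{1}{2\bar p}S_\alpha^{-\bar p}\rho^{\bar p}-\tfrac{\mu}{q}C_{N,q}^q a^{q(1-\gamma_q)/2}\rho^{q\gamma_q/2}$; the hypothesis $\mu a^{q(1-\gamma_q)/2}\le (2K)^{(q\gamma_q-2\bar p)/(2(\bar p-1))}$ is precisely the threshold (with $K$ as in \eqref{e1.14} and $\rho_0$ as in \eqref{e**}) guaranteeing that $h$ has a strictly negative local minimum on $(0,\rho_0)$ and that $h(\rho_0)\ge 0\ge\sup_{[0,\rho_*]}h$ for the relevant interval, so that $\inf_{V_a}E=m_a<0$ and $E>0$ (or at least $\ge 0$) on the boundary-in-energy set $\{u\in S_a:\|\nabla u\|_2^2=\rho_0\}$, and moreover $m_a<\inf_{\partial V_a}E$. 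This gives item (1): a minimizing sequence for $m_a$ stays in the interior of $V_a$.

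Next I would upgrade a minimizing sequence for $m_a$ to a Palais–Smale sequence via Ekeland's variational principle on the complete metric space $V_a$ (or its $S_a$-closure), note that such a PS sequence is bounded in $H^1$ because it lies in $B_{\rho_0}$, and then recover compactness. For the compactness step one passes to the radial rearrangement: replacing $u_n$ by $|u_n|^*$ decreases or preserves $E$ (the kinetic term by Pólya–Szegő, the Riesz-potential term increases under symmetrization by Riesz's inequality, the $L^q$ term is invariant), keeps the $L^2$-constraint, and keeps $\|\nabla u\|_2^2<\rho_0$; so I may assume the minimizing sequence is radially symmetric non-increasing, hence by the compact embedding $H^1_{rad}\hookrightarrow L^q$ ($2<q<2^*$) it converges strongly in $L^q$. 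Weak convergence $u_n\rightharpoonup\tilde u$ in $H^1$, together with Brezis–Lieb for the Riesz term and the $L^q$ strong convergence, then forces $\tilde u\in S_a$ and $E(\tilde u)=m_a$ — the key point being that if mass is lost, the vanishing-Sobolev part cannot compensate because of the strict sign of $m_a$ and the structure of $h$; more precisely one compares $m_a$ with $m_b$ for $b<a$ using the subadditivity/monotonicity $m_a<m_b$, which follows from $h_a(\rho)$ being strictly decreasing in $a$ on the negative range. This handles items (3) (compactness up to translation — here translation is irrelevant after rearrangement, but the statement is kept for the non-radial formulation) and (4) (the minimizer is, up to the rearrangement, positive — by the strong maximum principle applied to the equation once we know $\tilde u$ solves it — and radially symmetric non-increasing).

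For (5) and the identification $m_a=c_a^g$ in (2): by the Lagrange multiplier rule, $\tilde u$ solves $-\Delta\tilde u=\lambda\tilde u+(I_\alpha*|\tilde u|^{\bar p})|\tilde u|^{\bar p-2}\tilde u+\mu|\tilde u|^{q-2}\tilde u$ for some $\lambda\in\mathbb{R}$; testing with $\tilde u$ gives $\lambda a=\|\nabla\tilde u\|_2^2-\int(I_\alpha*|\tilde u|^{\bar p})|\tilde u|^{\bar p}-\mu\int|\tilde u|^q$, and combining with the Pohozaev identity for \eqref{e1.4} (which eliminates the Riesz term because it is scaling-critical) yields $\lambda a=-\mu(1-\gamma_q)\int|\tilde u|^q\cdot(\text{positive constant})<0$ since $\gamma_q<1$ for $q<2+\tfrac4N$ and $\mu>0$; so $\lambda<0$, giving (5). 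Since any normalized ground state $v$ has $(E|_{S_a})'(v)=0$, the same Pohozaev/Nehari analysis shows $\|\nabla v\|_2^2$ must equal the unique small critical value of $h$-type, i.e. $v$ lies in the interior of $V_a$, so $E(v)\ge m_a$; as $\tilde u$ is itself an admissible critical point, $c_a^g\le E(\tilde u)=m_a$, whence $c_a^g=m_a$ and every ground state is a local minimizer on $V_a$ — this is (2). The main obstacle I anticipate is the compactness/dichotomy step: because the leading term is exactly Sobolev-critical (via HLS), one must carefully rule out the escape of energy to the ``bubble at infinity'' carrying the critical norm; the device that makes it work is keeping everything inside $B_{\rho_0}$ (so the critical norm is quantitatively controlled below $S_\alpha^{\bar p}$-threshold) combined with the strict negativity $m_a<0$, which the hypothesis on $\mu a^{q(1-\gamma_q)/2}$ is designed to produce.
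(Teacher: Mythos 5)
Your treatment of items (1), (2), (4) and (5) is essentially the paper's: the lower bound $E(u)\geq\|\nabla u\|_2^2 f_{\mu,a}(\|\nabla u\|_2^2)$ with the threshold condition on $\mu a^{q(1-\gamma_q)/2}$ giving $m_a<0\leq\inf_{\partial V_a}E$ (Lemmas \ref{lem6.1}--\ref{lem6.3}), the Schwartz rearrangement of the minimizer for (4), and the Lagrange multiplier combined with $P(u)=0$ to get $\lambda a=\mu(\gamma_q-1)\|u\|_q^q<0$ for (5). Your identification $c_a^g=m_a$ via ``$P(v)=0$ and $E(v)<0$ force $v\in V_a$'' is also the paper's Lemma \ref{lem6.3}(2), phrased through the fiber map $\Psi_v$.

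The genuine gap is item (3). Your compactness step replaces the minimizing sequence by its Schwartz rearrangement and invokes the compact embedding $H^1_{rad}\hookrightarrow L^q$. That yields the \emph{existence} of a radial minimizer, but it cannot prove that $\mathcal{G}_a$ is compact up to translation: a sequence $\{u_n\}\subset\mathcal{G}_a$ need not be radial, and once you pass to $|u_n|^*$ you have lost all information about convergence of the original $u_n$ (symmetrization is not continuous in $H^1$ and does not control $\|u_n-|u_n|^*\|_{H^1}$). You half-acknowledge this by saying ``translation is irrelevant after rearrangement, but the statement is kept for the non-radial formulation,'' which is exactly the point at which the proof is missing. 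The paper instead proves Proposition \ref{pro6.1} for \emph{arbitrary} sequences $\{u_n\}\subset B_{\rho_0}$ with $\|u_n\|_2^2\to a$ and $E(u_n)\to m_a$: vanishing is excluded by Lions' lemma together with $m_a<0$ and the sign of $\frac12-\frac{1}{2\bar p}S_\alpha^{-\bar p}\rho_0^{\bar p-1}=\frac{\mu}{q}C_{N,q}^q a_0^{q(1-\gamma_q)/2}\rho_0^{(q\gamma_q-2)/2}>0$, and dichotomy is excluded via the Brezis--Lieb splitting and the strict subadditivity $m_a<m_{a_1}+m_{a-a_1}$ of Lemma \ref{lem6.4}. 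This stronger, translation-invariant statement is what gives (3), and it is also the ingredient the paper later needs for the Cazenave--Lions orbital stability argument in Theorem \ref{thm6.2} (where one must handle perturbed initial data with $\|\psi_0\|_2^2$ only close to $a$); your radial shortcut would not suffice there either.
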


\begin{theorem}\label{thm6.2}
Let the assumptions in Theorem \ref{thm6.1} hold, $\alpha\geq N-4$
(i.e., $\bar{p}\geq 2$) and $\alpha< N-2$. Then the set
$\mathcal{G}_a$ is orbitally stable.
\end{theorem}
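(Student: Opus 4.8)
The plan is to derive orbital stability of $\mathcal{G}_a$ from the variational characterization obtained in Theorem \ref{thm6.1} via the classical Cazenave--Lions argument, once we know that the Cauchy problem for \eqref{e1.2} is locally well posed in $H^1(\mathbb{R}^N)$ and that solutions with initial data near $\mathcal{G}_a$ exist globally and conserve mass and energy. The extra hypotheses $\alpha\geq N-4$ and $\alpha<N-2$ are precisely what is needed to make the local nonlinearity $\mu|u|^{q-2}u$ with $q\in(2,2+4/N)$ and the Choquard term $(I_\alpha\ast|u|^{\bar p})|u|^{\bar p -2}u$ energy-subcritical in a compatible range (here $\bar p\geq 2$ keeps the convolution term $C^1$ and Strichartz-amenable, and $\alpha<N-2$ keeps $\bar p>1$ and the problem in the mass-subcritical regime for the relevant scaling), so that standard Strichartz estimates yield local well-posedness in $H^1$; I would cite \cite{Cazenave 2003,Miao-Xu 2010,Feng-Yuan 2015} for this.

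First I would record the conservation laws: along the $H^1$-flow $\psi(t)$, both $\|\psi(t)\|_2^2$ and $E(\psi(t))$ are constant in time. Next, the key a priori bound: by Theorem \ref{thm6.1}(1)--(2), $m_a<0$ is attained only in the interior of $V_a=S_a\cap B_{\rho_0}$, and, as established in the proof of that theorem, $E|_{S_a}$ is bounded below on $V_a$ and strictly positive near the boundary $\partial B_{\rho_0}$ (where $\|\nabla u\|_2^2=\rho_0$), so there is a ``mountain-pass geometry'' separating the local minimum from the rest of $S_a$. Concretely, for $u\in S_a$ with $\|\nabla u\|_2^2$ close to $\rho_0$ one has $E(u)\geq 0 > m_a$; this barrier, together with continuity of $t\mapsto\|\nabla\psi(t)\|_2^2$ and the energy/mass conservation, forces any solution starting in $V_a$ with energy close to $m_a$ to remain trapped in $B_{\rho_0}$ for all $t$, hence to be global.

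Then I would run the contradiction argument. Suppose $\mathcal{G}_a$ were not orbitally stable: there exist $\epsilon_0>0$, a sequence $\psi_{0,n}\to$ (in $H^1$) some element of $\mathcal{G}_a$, i.e. $\inf_{v\in\mathcal{G}_a}\|\psi_{0,n}-v\|_{H^1}\to0$, and times $t_n>0$ with $\inf_{v\in\mathcal{G}_a}\|\psi_n(t_n,\cdot)-v\|_{H^1}\geq\epsilon_0$. By continuity of $E$ and of the mass, and since every element of $\mathcal{G}_a$ lies in the interior of $V_a$ with energy $m_a$, we get $\|\psi_{0,n}\|_2^2\to a$ and $E(\psi_{0,n})\to m_a$; by conservation and the trapping above, $\|\psi_n(t_n)\|_2^2\to a$, $E(\psi_n(t_n))\to m_a$ and $\|\nabla\psi_n(t_n)\|_2^2<\rho_0$. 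A small correction (rescaling the $L^2$-norm to exactly $a$, which costs $o(1)$ in $H^1$ because the mass is already $a+o(1)$ and the gradient stays below $\rho_0$) produces $w_n\in V_a$ with $E(w_n)\to m_a=m_a$, i.e. a minimizing sequence for $m_a$ on $V_a$. By the compactness statement Theorem \ref{thm6.1}(3) (compactness of $\mathcal{G}_a$ up to translations, established through a Lions-type concentration-compactness analysis of minimizing sequences of $m_a$), there are translations $y_n$ and $v\in\mathcal{G}_a$ with $w_n(\cdot+y_n)\to v$ in $H^1$ along a subsequence. Since the Choquard equation \eqref{e1.2} is translation invariant, $\psi_n(t_n,\cdot+y_n)$ is again the flow at time $t_n$ of a translated datum, and $\|\psi_n(t_n)-v(\cdot-y_n)\|_{H^1}\to0$, contradicting $\inf_{v\in\mathcal{G}_a}\|\psi_n(t_n)-v\|_{H^1}\geq\epsilon_0$.

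The main obstacle I anticipate is not the abstract Cazenave--Lions scheme but two technical points. First, local well-posedness and the conservation laws in $H^1$ for the mixed-nonlinearity Choquard equation at the Hardy--Littlewood--Sobolev upper-critical exponent $p=\bar p$: the convolution term is $H^1$-critical, so although the energy $E$ is still $C^1$, the Cauchy theory requires care, and this is exactly why the restriction $\alpha\geq N-4$ (so $\bar p\geq2$, giving enough smoothness of $u\mapsto|u|^{\bar p}$) is imposed; I would invoke the known $H^1$ local theory from \cite{Cazenave 2003,Miao-Xu 2010} and combine it with the global-existence trapping above. Second, one must make sure that the ``projection'' of $\psi_n(t_n)$ onto $S_a$ is genuinely $o(1)$ and lands inside $B_{\rho_0}$; this is where the strict separation $E\geq0>m_a$ near $\partial B_{\rho_0}$ from the proof of Theorem \ref{thm6.1} is used in an essential way, ensuring the flow cannot leak across the gradient barrier and that the corrected sequence is a legitimate minimizing sequence for $m_a$. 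Once these are in place, compactness (Theorem \ref{thm6.1}(3)) closes the argument.
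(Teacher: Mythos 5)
Your proposal follows essentially the same route as the paper: the paper also proves Theorem \ref{thm6.2} by the Cazenave--Lions contradiction argument (it defers to Section 4 of \cite{JEANJEAN-JENDREJ} verbatim), resting on exactly the two pillars you identify, namely the compactness of minimizing sequences in $B_{\rho_0}$ (Proposition \ref{pro6.1}) combined with the potential-well trapping $\inf_{\partial V_a}E\geq 0>m_a$, and a local well-posedness theory with conservation of mass and energy. The only place you are slightly optimistic is in treating the $H^1$ local theory as citable off the shelf: because the Choquard term is at the Hardy--Littlewood--Sobolev upper critical exponent, the paper has to prove its own Strichartz-type nonlinear estimates (Lemma \ref{lem5.2} and Proposition \ref{pro5.1}), and it is there --- not in any mass-subcriticality consideration --- that the hypothesis $\alpha<N-2$ enters (via the exponent $a_1=\frac{2N}{N-2-\alpha}$), while $\alpha\geq N-4$ gives $\bar p\geq 2$ and hence enough regularity of the nonlinearity, as you correctly guessed.
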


To prove Theorems \ref{thm6.1} and \ref{thm6.2}, we follow the
strategy of \cite{JEANJEAN-JENDREJ}. In the proofs, a special role
will be played by the Poho\v{z}aev set
\begin{equation*}
\mathcal{P}_{a}:=\{u\in S_a: P(u)=0\},
\end{equation*}
where
\begin{equation*}
\begin{split}
P(u):=\int_{\mathbb{R}^N}|\nabla
u|^2-\int_{\mathbb{R}^N}(I_\alpha\ast|u|^{\bar{p}})|u|^{\bar{p}}-\mu\gamma_q\int_{\mathbb{R}^N}|u|^{q}.
\end{split}
\end{equation*}
The set $\mathcal{P}_{a}$ is quite related to the fiber map
\begin{equation}\label{e6.6}
\begin{split}
\Psi_u(\tau):=E(u_\tau)=\frac{1}{2}\tau^{2}\|\nabla
u\|_2^2-\frac{1}{2\bar{p}}\tau^{2\bar{p}}\int_{\mathbb{R}^N}
(I_\alpha\ast|u|^{\bar{p}})|u|^{\bar{p}}-\frac{\mu}{q}\tau^{q\gamma_q}\|u\|_q^{q},
\end{split}
\end{equation}
where
\begin{equation}\label{e6.5}
u_\tau(x):=\tau^{\frac{N}{2}}u(\tau x),\ x\in\mathbb{R}^N,\ \tau>0.
\end{equation}
The fiber map $\Psi_u(\tau)$ is introduced by Jeanjean in
\cite{Jeanjean 1997} for the Schr\"{o}dinger equation  and is well
studied by Soave in \cite{Soave JDE}. According to $\Psi_u(\tau)$,
$\mathcal{P}_{a}=\mathcal{P}_{a,+}\cup \mathcal{P}_{a,-}$, where
\begin{equation*}
\mathcal{P}_{a,+}:=\{u\in \mathcal{P}_{a}: E(u)<0\},\
\mathcal{P}_{a,-}:=\{u\in \mathcal{P}_{a}: E(u)> 0\}
\end{equation*}
if $\mu a^{\frac{q(1-\gamma_q)}{2}}\leq
(2K)^{\frac{q\gamma_q-2\bar{p}}{2(\bar{p}-1)}}$, see Lemmas
\ref{lem6.6} and \ref{lem7.21}.

The ground state $u_{+}$ obtained in Theorem \ref{thm6.1} lies on
$\mathcal{P}_{a,+}$ and can be characterized by
\begin{equation*}
E(u_{+})=\inf_{u\in \mathcal{P}_{a,+}}E(u)=\inf_{u\in
V_a}E(u)=m_{a}.
\end{equation*}
The critical point $u_{-}$ which will be  obtained in the following
theorem lies on $\mathcal{P}_{a,-}$ and can be characterized by
$E(u_{-})=\inf_{u\in \mathcal{P}_{a,-}}E(u)$. Precisely,

\begin{theorem}\label{thm6.3}
Let $N\geq 3$, $\alpha\in (0,N)$, $2<q<2+\frac{4}{N}$, $p=\bar{p}$,
$\mu>0$, $a>0$, $\mu a^{\frac{q(1-\gamma_q)}{2}}\leq
(2K)^{\frac{q\gamma_q-2\bar{p}}{2(\bar{p}-1)}}$. Then there exists a
second solution $u_{-}$ to (\ref{e1.5}) which satisfies
\begin{equation*}
0<E(u_{-})=\inf_{u\in
\mathcal{P}_{a,-}}E(u)<m_a+\frac{2+\alpha}{2(N+\alpha)}S_\alpha^{\frac{N+\alpha}{2+\alpha}}.
\end{equation*}
In particular, $u_{-}$  is not a ground state.
\end{theorem}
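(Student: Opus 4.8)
The plan is to obtain $u_-$ as a mountain pass critical point of $E|_{S_a}$, realized at the level $\inf_{u\in\mathcal{P}_{a,-}}E(u)$, by working with the augmented functional on $S_a\times\mathbb{R}$ and exploiting the geometry of the fiber maps $\Psi_u(\tau)$. First I would analyze $\Psi_u$ for $u\in S_a$: under the subcritical assumption $\mu a^{q(1-\gamma_q)/2}\le (2K)^{(q\gamma_q-2\bar p)/(2(\bar p-1))}$, the function $\Psi_u$ has exactly two critical points $\tau_+(u)<\tau_-(u)$ (a strict local minimum at $\tau_+$ with $\Psi_u(\tau_+)<0$ and a strict local maximum at $\tau_-$ with $\Psi_u(\tau_-)>0$), so that $u_{\tau_\pm(u)}\in\mathcal{P}_{a,\pm}$; this is essentially the content of Lemmas \ref{lem6.6} and \ref{lem7.21}, which I would use as a black box. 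This dichotomy gives $\mathcal{P}_{a,+}$ and $\mathcal{P}_{a,-}$ as the two ``branches'' of $\mathcal{P}_a$, with $\mathcal{P}_{a,+}$ lying inside $B_{\rho_0}$ and $\mathcal{P}_{a,-}$ outside it, separated by the barrier $\{\|\nabla u\|_2^2=\rho_0\}$ on which $E\ge \sup_{V_a}E\ge 0$ (up to choosing $\rho_0$ via \eqref{e**}).

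Next I would set up the minimax. Define
\begin{equation*}
\sigma(a):=\inf_{u\in\mathcal{P}_{a,-}}E(u),
\end{equation*}
and show $\sigma(a)=\inf_{g\in\Gamma}\max_{t\in[0,1]}E(g(t))$ for a suitable class $\Gamma$ of paths in $S_a$ joining a point of $V_a$ (where $E<0$) to a point where $E<0$ again on the far side, each such path crossing $\mathcal{P}_{a,-}$; monotonicity of $\tau\mapsto\Psi_u(\tau)$ past $\tau_-(u)$ and the fact that $\Psi_u(\tau)\to-\infty$ as $\tau\to\infty$ (because $2\bar p>q\gamma_q$ and the Riesz term dominates) give the endpoint with negative energy. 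By Ekeland's variational principle applied to the constrained functional, or by the Jeanjean/Bartsch--Soave machinery of a Palais--Smale sequence ``almost lying on $\mathcal{P}_a$'' (using the $\tau$-derivative of $E(u_\tau)$ as an extra constraint to gain the sign information $P(u_n)\to0$), I obtain a Palais--Smale sequence $(u_n)\subset S_a$ at level $\sigma(a)$ with $P(u_n)\to0$ and $u_n^-\to0$ if one wants positivity; passing to the radial decreasing rearrangement at the level of the minimizing/PS sequence preserves all quantities (here $\bar p\ge 1$ and the Riesz nonlocal term behaves well under rearrangement) so one may take $u_n$ radial.

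The main obstacle is the compactness of this Palais--Smale sequence, since we are at the Hardy--Littlewood--Sobolev upper critical exponent $p=\bar p$ and the embedding into the critical Choquard nonlinearity is non-compact. I would control the energy level by the strict inequality
\begin{equation*}
\sigma(a)<m_a+\frac{2+\alpha}{2(N+\alpha)}S_\alpha^{\frac{N+\alpha}{2+\alpha}},
\end{equation*}
which is exactly the threshold below which no loss of mass to the ``bubble'' solution of \eqref{e1.13} at energy $\frac{2+\alpha}{2(N+\alpha)}S_\alpha^{(N+\alpha)/(2+\alpha)}$ can occur. To prove this strict estimate I would insert a suitably rescaled Talenti-type extremal $U_\varepsilon$ for $S_\alpha$ into a test path through $\tilde u$ (the ground state from Theorem \ref{thm6.1}) and estimate $\max_\tau E((\tilde u + U_\varepsilon)_\tau)$; the subcritical term $\mu|u|^{q-2}u$ with $q<2+\frac4N$ provides a strictly negative lower-order correction that beats the positive $O(\varepsilon^{?})$ cross terms, yielding the strict gap — this delicate expansion is where the real work lies. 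With the level below the threshold, a Brezis--Lieb/concentration-compactness argument for the nonlocal term (splitting $u_n=u+v_n$, $v_n\rightharpoonup0$, and ruling out $v_n\not\to0$ by the gap) forces $u_n\to u_-$ strongly in $H^1$; then $u_-\in\mathcal{P}_{a,-}$ with $E(u_-)=\sigma(a)\in(0,\, m_a+\frac{2+\alpha}{2(N+\alpha)}S_\alpha^{(N+\alpha)/(2+\alpha)})$, the Lagrange multiplier $\lambda<0$ follows from $P(u_-)=0$ together with the Nehari/Pohozaev identities, and since $E(u_-)>0>m_a=c_a^g$, $u_-$ is not a ground state. $\hfill\square$
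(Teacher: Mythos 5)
Your overall architecture coincides with the paper's: a mountain pass for the augmented functional $\tilde E(\tau,u)=E(u_\tau)$ producing a Palais--Smale sequence with $P(u_n)\to 0$ (Lemma \ref{pro1.2}), identification of the minimax level with $\inf_{\mathcal{P}_{a,-}}E$ via Schwartz rearrangement (Lemma \ref{pro1.3}), strict positivity of that level (Lemmas \ref{lem7.23}, \ref{lem7.21}), the strict threshold $\inf_{\mathcal{P}_{a,-}}E<m_a+\frac{2+\alpha}{2(N+\alpha)}S_\alpha^{\frac{N+\alpha}{2+\alpha}}$ via a cut-off Aubin--Talenti bubble glued to the ground state $u_+$, and compactness below the threshold (Lemma \ref{pro1.4}). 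One small structural correction: you cannot take the radial decreasing rearrangement of a Palais--Smale sequence and expect it to remain one; the paper instead runs the minimax inside $H^1_r(\mathbb{R}^N)$ from the start and uses rearrangement only to prove $M_r(a)=\inf_{\mathcal{P}_{a,-}}E=\inf_{\mathcal{P}_{a,-}\cap H^1_r}E$.

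The genuine gap is in the decisive quantitative step, the strict upper bound, where your proposed mechanism does not work throughout the stated parameter range. You claim the subcritical term $-\frac{\mu}{q}t^q\|u_\epsilon\|_q^q$ supplies the negative correction that beats the positive error terms. For $N\ge 4$ this is correct (one checks $\|u_\epsilon\|_q^q\gtrsim \epsilon^{N-\frac{N-2}{2}q}$ with exponent $<2$, while the worst error is $O(\|u_\epsilon\|_2^2)=O(\epsilon^2)$ or $O(\epsilon^2|\ln\epsilon|)$). But for $N=3$ and $q\in(2,10/3)$ one has $\|u_\epsilon\|_q^q\thickapprox\epsilon^{q/2}=o(\epsilon)$ whereas $\|u_\epsilon\|_2^2\thickapprox\epsilon$, so the local term is swamped by the $O(\|u_\epsilon\|_2^2)$ error; the paper must instead extract a negative contribution of order $\epsilon^{\frac{N-2}{2}}$ from the nonlocal cross term $-\int_{\mathbb{R}^N}(I_\alpha\ast|tu_\epsilon|^{\bar p})|tu_\epsilon|^{\bar p-1}u_+$ (estimate (\ref{e7.14})), which requires the convexity inequality $(a+b)^r\ge a^r+ra^{r-1}b+rab^{r-1}+b^r$ valid only for $r\ge 3$ (i.e.\ it exploits $\bar p=3+\alpha$ when $N=3$). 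Symmetrically, for $N\ge 5$ with $\bar p<2$ (i.e.\ $\alpha<N-4$) the pointwise expansion $(u_++tu_\epsilon)^{\bar p}\ge u_+^{\bar p}+\bar p u_+^{\bar p-1}tu_\epsilon+(tu_\epsilon)^{\bar p}$ is unavailable, and the paper needs a separate device (Lemma \ref{lem7.3}): translate the ground state to $u_+(\cdot-y_\epsilon)$ with $|y_\epsilon|$ large, and prove via the decay estimates (\ref{e8.1})--(\ref{e8.9}) that all cross terms are $O(\|u_\epsilon\|_2^2)$, which the $\mu$-term then dominates. Your sketch addresses neither regime. Finally, the strict positivity $\inf_{\mathcal{P}_{a,-}}E>0$ in the borderline case $\mu a^{\frac{q(1-\gamma_q)}{2}}=(2K)^{\frac{q\gamma_q-2\bar p}{2(\bar p-1)}}$ is not a consequence of the fiber-map geometry alone (Lemma \ref{lem6.6} only gives $\ge 0$ there); it is a separate argument (Lemma \ref{lem7.21}) showing a zero-level minimizing sequence would have to optimize simultaneously the Gagliardo--Nirenberg inequality and $S_\alpha$, forcing incompatible profiles. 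You cite this as a black box, so it is not a flaw of logic, but it is a substantive ingredient your proposal does not supply.
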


\begin{remark}
Note that the result is new in the case $\mu
a^{\frac{q(1-\gamma_q)}{2}}=(2K)^{\frac{q\gamma_q-2\bar{p}}{2(\bar{p}-1)}}$.
There is not corresponding result even to the Schr\"{o}dinger
equation (\ref{e7.28}). During the proof, the  lower bound of
$\inf_{u\in \mathcal{P}_{a,-}}E(u)$ obtained in  Lemma \ref{lem7.21}
plays an important role. The proof of Lemma \ref{lem7.21} is
interesting. Maybe it give us some insights to consider the case
$\mu
a^{\frac{q(1-\gamma_q)}{2}}>(2K)^{\frac{q\gamma_q-2\bar{p}}{2(\bar{p}-1)}}$.
\end{remark}

We combine the methods used in \cite{Jeanjean-Le} and \cite{Wei-Wu
2021} to prove Theorem \ref{thm6.3}. Precisely, we first use the
mountain pass lemma to obtain a Palais-Smale sequence $\{u_n\}$ of
$E$ on $S_{a}\cap H_r^1(\mathbb{R}^N)$ with $P(u_n)\to 0$ and
$E(u_n)\to M_r(a)$ as $n\to\infty$, see Lemma \ref{pro1.2}.
Secondly, by using the Poho\v{z}aev constraint method and the
Schwartz  rearrangement, we can show that
\begin{equation*}
M_{r}(a)=M(a)=\inf_{u\in
\mathcal{P}_{a,-}}E(u)=\inf_{\mathcal{P}_{a,-}\cap
H_r^1(\mathbb{R}^N)}E(u),
\end{equation*}
see Lemma \ref{pro1.3}. Thirdly, by using the radial symmetry of
$\{u_n\}$ and the bounds of $\inf_{u\in \mathcal{P}_{a,-}}E(u)$, we
can show that $\{u_n\}$ converges to a solution  to  (\ref{e1.5}).
In the proof, to obtain the upper bound of $\inf_{u\in
\mathcal{P}_{a,-}}E(u)$ is a difficult task. When $N\geq 5$ and
$\bar{p}<2$, the methods used in \cite{Wei-Wu 2021} can not threat
the nonlocal term $(I_\alpha\ast|u|^{\bar{p}})|u|^{\bar{p}-2}u$
directly, see Lemma \ref{lem7.1}.  Inspired by \cite{Jeanjean-Le},
by using the radially non-increasing of $u_+$ and by calculating the
nonlocal term carefully (see (\ref{e8.2})), we can choose
$\{y_\epsilon\}$ satisfying (\ref{e8.1}) and (\ref{e8.9}).  Based of
which, we can give the upper bound of $\inf_{u\in
\mathcal{P}_{a,-}}E(u)$ when $N\geq 5$ and $\bar{p}<2$, see Lemma
\ref{lem7.3} .

The following result is about the positivity, radial symmetry and
exponential decay  of the ``second class" solution.

\begin{theorem}\label{thm1.3}
Assume the conditions in Theorem \ref{thm6.3}  hold. Let $u$ be a
solution to (\ref{e1.5}) with $E(u)=\inf_{v\in
\mathcal{P}_{a,-}}E(v)$, then

(1) $|u|>0$;

(2) There exist $x_0\in \mathbb{R}^N$ and a non-increasing positive
function $v: (0,\infty)\to\mathbb{R}$ such that $|u(x)|=v(|x-x_0|)$
for almost every $x\in \mathbb{R}^N$;

(3) If $\alpha\geq N-4$ (i.e., $\bar{p}\geq 2$), then $|u|$ has
exponential decay at infinity:
\begin{equation*}
|u(x)|\leq C e^{-\delta |x|},\ \ |x|\geq r_0,
\end{equation*}
for some $C>0$, $\delta>0$ and  $r_0>0$.
\end{theorem}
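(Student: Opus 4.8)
The plan is to establish the three qualitative properties of a minimizer $u$ on $\mathcal{P}_{a,-}$ in sequence, following the by-now standard route for Choquard-type problems: positivity via rearrangement, radial symmetry via the strict Riesz rearrangement inequality, and exponential decay via a comparison argument for the corresponding linear inequality.

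\medskip

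\noindent\textbf{Step 1 (Positivity).} Since $u$ solves (\ref{e1.5}) for some Lagrange multiplier $\lambda$, and the nonlinearities $(I_\alpha\ast|u|^{\bar p})|u|^{\bar p-2}u$ and $\mu|u|^{q-2}u$ are odd in $u$, I would first pass to $|u|$. From the pointwise inequality $|\nabla|u||\le|\nabla u|$ a.e.\ and the fact that $(I_\alpha\ast|u|^{\bar p})|u|^{\bar p}=(I_\alpha\ast||u||^{\bar p})||u||^{\bar p}$, one sees $E(|u|)\le E(u)$ while $P(|u|)\le P(u)=0$. A small scaling argument (using the fiber map $\Psi_{|u|}$ from (\ref{e6.6}) and the fact that on $\mathcal{P}_{a,-}$ the relevant critical point of $\Psi_u$ is a strict local maximum, cf.\ Lemma \ref{lem7.21}) shows that if $P(|u|)<0$ we could push $|u|$ back onto $\mathcal{P}_{a,-}$ while strictly lowering the energy below $\inf_{\mathcal{P}_{a,-}}E$, a contradiction; hence $P(|u|)=0$, $|u|\in\mathcal{P}_{a,-}$ and $E(|u|)=E(u)$, so $|u|$ is itself a minimizer, hence a (weak) solution of (\ref{e1.5}) with the same $\lambda$. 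Since $2\bar p\ge 2$ and $q\ge 2$, the equation is $-\Delta|u|=\lambda|u|+(\text{nonneg.\ terms})\ge \lambda|u|$, and elliptic regularity gives $|u|\in W^{2,r}_{loc}$, hence continuous; the strong maximum principle (or Harnack inequality, applied on balls after writing $-\Delta|u| + (c(x))_-|u|\ge 0$ with a locally bounded potential coming from the nonlinear terms) then forces $|u|>0$ everywhere, since $|u|\not\equiv 0$.

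\medskip

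\noindent\textbf{Step 2 (Radial symmetry).} With $v:=|u|>0$ a positive minimizer, I would compare $v$ with its Schwarz symmetrization $v^*$. The gradient term satisfies $\|\nabla v^*\|_2^2\le\|\nabla v\|_2^2$ (Pólya–Szegő), the local term $\int|v^*|^q=\int|v|^q$ is invariant, and the nonlocal term obeys the Riesz rearrangement inequality $\int(I_\alpha\ast|v|^{\bar p})|v|^{\bar p}\le\int(I_\alpha\ast|v^*|^{\bar p})|v^*|^{\bar p}$. Consequently $P(v^*)\le P(v)=0$ and, after the same reprojection onto $\mathcal{P}_{a,-}$ as in Step 1, $v^*$ (suitably rescaled) is again a minimizer with $E(v^*)=E(v)=m$, which forces equality in Pólya–Szegő: $\|\nabla v^*\|_2=\|\nabla v\|_2$. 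Invoking the Brothers–Ziemer equality case (valid because $v>0$, so the measure of the set where $\nabla v=0$ on level sets is controlled) yields that $v$ is, up to translation, equal to a radially symmetric non-increasing function; i.e.\ there exist $x_0$ and non-increasing $v:(0,\infty)\to\mathbb{R}$ with $|u(x)|=v(|x-x_0|)$ a.e. The main obstacle here is the bookkeeping of the reprojection step: one must check that after symmetrizing, the rescaling parameter $\tau$ bringing the function back to $\mathcal{P}_{a,-}$ is $\ge 1$ (so energy does not increase), which again relies on the precise shape of $\Psi_u$ on the branch $\mathcal{P}_{a,-}$ from Lemma \ref{lem7.21}; I expect this to be the most delicate point.

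\medskip

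\noindent\textbf{Step 3 (Exponential decay, under $\bar p\ge 2$).} Assume $x_0=0$, so $w:=|u|$ is radial, positive, and since $w\in H^1$ it is continuous and $w(r)\to0$ as $r\to\infty$; by the Pohozaev/equation structure $\lambda<0$. Set $m:=I_\alpha\ast w^{\bar p}$. When $\bar p\ge 2$ (i.e.\ $\alpha\ge N-4$), $w^{\bar p}\in L^1\cap L^\infty$-type bounds combined with Riesz potential estimates give $m(x)\to0$ as $|x|\to\infty$, and $m\,w^{\bar p-2}\to0$ as well (here $\bar p\ge 2$ is exactly what makes $w^{\bar p-2}$ bounded). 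Hence for $|x|\ge r_0$ large, the coefficient $c(x):=m(x)w(x)^{\bar p-2}+\mu w(x)^{q-2}$ satisfies $0\le c(x)\le |\lambda|/2$, so $w$ satisfies the differential inequality $-\Delta w\le \lambda w + c(x)w\le -\tfrac{|\lambda|}{2}w$, i.e.\ $-\Delta w+\tfrac{|\lambda|}{2}w\le 0$ on $\{|x|\ge r_0\}$. Comparing $w$ with the supersolution $x\mapsto Ce^{-\delta|x|}$ for $\delta^2<|\lambda|/2$ and $C$ chosen so that $Ce^{-\delta r_0}\ge \max_{|x|=r_0}w$, the maximum principle on the exterior domain (using $w\to0$ at infinity) yields $w(x)\le Ce^{-\delta|x|}$ for $|x|\ge r_0$, which is the claimed bound. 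The role of $\bar p\ge 2$ is solely to guarantee $w^{\bar p-2}$ is bounded near infinity; without it the nonlocal coefficient need not be small and only polynomial decay would be available.
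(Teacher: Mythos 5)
Your Steps 1 and 3 are essentially the paper's argument: positivity is obtained by comparing the fiber maps $\Psi_{|u|}\le\Psi_u$ to conclude that $|u|$ (after the reprojection of Proposition \ref{lem4.5}) is again a minimizer, hence a solution, and then applying continuity plus the strong maximum principle; the decay is obtained exactly as in Proposition \ref{pro1.5}, by showing $I_\alpha\ast|u|^{\bar{p}}\le-\lambda/2$ outside a large ball (the paper does this carefully via the decomposition (\ref{e8.2}) together with the radial Lemma \ref{lem jx}, which is where the radial monotonicity is genuinely needed) and then invoking the comparison/Berestycki--Lions argument, with $\bar{p}\ge2$ entering precisely as you say.

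Step 2 is where you diverge from the paper, and it contains a genuine gap. Your chain of inequalities does correctly force equality in both the P\'olya--Szeg\H{o} and the Riesz rearrangement inequalities (since $\tau_{v^*}^-$ must coincide with the unique global maximum point $\tau_v^-=1$ of $\Psi_v$ by Lemma \ref{lem6.6}). But your justification of the Brothers--Ziemer equality case --- ``valid because $v>0$, so the measure of the set where $\nabla v=0$ on level sets is controlled'' --- is not correct: positivity of $v$ gives no control whatsoever on the critical set $\{\nabla v^*=0\}\cap\{0<v^*<\mathrm{ess\,sup}\,v\}$, and without that hypothesis equality in P\'olya--Szeg\H{o} does not imply that $v$ is a translate of $v^*$. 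The fix is available inside your own argument: discard the gradient term and instead use the equality case of the strict Riesz rearrangement inequality for the strictly symmetric-decreasing kernel $|x|^{-(N-\alpha)}$ (Theorem 3.9 in \cite{Lieb-Loss 2001}), which from $\int(I_\alpha\ast v^{\bar{p}})v^{\bar{p}}=\int(I_\alpha\ast(v^*)^{\bar{p}})(v^*)^{\bar{p}}$ yields directly that $v^{\bar{p}}$, hence $v$, is a translate of its symmetrization, with no nondegeneracy hypothesis. The paper sidesteps the issue differently: following \cite{Moroz-Schaftingen 2015}, it polarizes an optimal mountain-pass path (using the identification $M(a)=\inf_{\mathcal{P}_{a,-}}E$ from Lemma \ref{pro1.3}); polarization preserves the Dirichlet integral exactly (Lemma \ref{lem4.1}), so only the nonlocal term can change, its equality case is clean (Lemma \ref{lem4.2}), and symmetry follows from Lemma \ref{lem4.4}. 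Your symmetrization route, once repaired as above, is a legitimate and somewhat more elementary alternative, while the polarization route avoids ever having to discuss equality cases of gradient inequalities.
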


The positivity is obtained by using the properties of $\Psi_u(\tau)$
and $\inf_{v\in \mathcal{P}_{a,-}}E(v)$. The symmetry is obtained by
using the theories of polarization and the fact that $\inf_{v\in
\mathcal{P}_{a,-}}E(v)$ is a mountain pass level value. This method
is motivated by \cite{Moroz-Schaftingen 2015}. The exponential decay
follows the  radial symmetry, the estimate of
$(I_\alpha\ast|u|^{\bar{p}})$ and the exponential decay studied in
\cite{Berestycki-Lions 1983} to the Schr\"{o}dinger equation.
Theorem \ref{thm1.3} plays an important role in proving the
following result.

Theorem \ref{thm1.4} is about the instability of the ``second class"
solution, which is very new in the existing researches. As we know,
most existing results are about the instability of a solution but
not all solutions.

\begin{theorem}\label{thm1.4}
Assume  the conditions in Theorem \ref{thm6.2}  hold. Let $u$ be a
solution to (\ref{e1.5}) with $E(u)=\inf_{v\in
\mathcal{P}_{a,-}}E(v)$, then $\lambda<0$ and the associated
standing wave $e^{-i\lambda t}u$ is strongly unstable.
\end{theorem}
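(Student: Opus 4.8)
The plan is to prove strong instability by a virial/concavity argument of the Berestycki--Cazenave type, exploiting the fact that $u$ is a minimizer on $\mathcal{P}_{a,-}$ together with the sign information coming from the fiber map $\Psi_u$. First, I would record that $\lambda<0$: since $u$ solves \eqref{e1.5} with $E(u)=\inf_{v\in\mathcal P_{a,-}}E(v)$, the Pohozaev identity $P(u)=0$ holds, and combining the equation tested against $u$ with $P(u)=0$ yields $\lambda a = \lambda\|u\|_2^2$ expressed through the nonlinear terms; using $p=\bar p$ and $2<q<2+\tfrac4N$ (so $q\gamma_q<2$) one gets $\lambda<0$ after a short computation, analogous to the argument sketched for $u_+$. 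The negativity of $\lambda$ guarantees that the standing wave is a genuine bound state and that the linearized problem has the right structure for a blow-up argument.

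Next, the heart of the matter: I would show that along the scaling path $\tau\mapsto u_\tau$ defined in \eqref{e6.5}, the point $\tau=1$ is a strict local \emph{maximum} of $\Psi_u(\tau)=E(u_\tau)$, i.e. $\Psi_u'(1)=0$ (this is precisely $P(u)=0$) and $\Psi_u''(1)<0$. The second derivative is
\begin{equation*}
\Psi_u''(1)=\|\nabla u\|_2^2-\bar p(2\bar p-1)\!\int_{\mathbb{R}^N}\!(I_\alpha\ast|u|^{\bar p})|u|^{\bar p}-\mu\gamma_q(q\gamma_q-1)\|u\|_q^q,
\end{equation*}
and using $P(u)=0$ to eliminate one term one reduces this to a negative quantity because $\bar p\ge?$... more precisely because $2\bar p>2$ and $q\gamma_q<2$ force the combination to be strictly negative on $\mathcal P_{a,-}$ (this is exactly the content of the decomposition $\mathcal P_a=\mathcal P_{a,+}\cup\mathcal P_{a,-}$ with $\mathcal P_{a,-}$ consisting of local maxima of the fiber map, established in Lemma \ref{lem7.21}). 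Then I would take as initial datum $\psi_0=u_{\tau_0}$ for $\tau_0>1$ close to $1$, which satisfies $\|\psi_0\|_2^2=a$, $\|\psi_0-u\|_{H^1}<\epsilon$, and, crucially, $E(\psi_0)<E(u)=M(a)$ while $P(\psi_0)<0$: indeed $\Psi_u$ is decreasing for $\tau>1$ near $1$, and $P(u_\tau)=\tau\Psi_u'(\tau)$ so $P(u_{\tau_0})<0$.

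The final step is the standard finite-time blow-up mechanism. For the solution $\psi(t)$ of \eqref{e1.2} with $\psi(0)=\psi_0$, using conservation of mass and energy together with the variational characterization $M(a)=\inf_{\mathcal P_{a,-}}E$, I would show the set $\{v\in S_a: E(v)<M(a),\ P(v)<0\}$ is invariant under the flow (if $P(\psi(t_*))=0$ at some first time then $\psi(t_*)\in\mathcal P_{a,-}$ forces $E(\psi(t_*))\ge M(a)$, contradicting energy conservation), and that on this set $P(\psi(t))\le E(\psi(t))-M(a)<0$ is bounded away from $0$, or more directly that $P(\psi(t))\le \beta\big(E(\psi_0)-M(a)\big)<0$ for some $\beta>0$. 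Then the virial identity
\begin{equation*}
\frac{d^2}{dt^2}\int_{\mathbb{R}^N}|x|^2|\psi(t,x)|^2\,dx = 8P(\psi(t))\le 8\beta\big(E(\psi_0)-M(a)\big)<0
\end{equation*}
(valid once one knows $|x|\psi_0\in L^2$, which holds by the exponential decay of $u$ from Theorem \ref{thm1.3}(3) under the hypothesis $\alpha\ge N-4$ of Theorem \ref{thm6.2}, so that $|x|u_{\tau_0}\in L^2$ as well) forces the variance to become negative in finite time, whence blow-up. The main obstacle I anticipate is twofold: first, justifying the virial identity rigorously for the Choquard nonlinearity (a local well-posedness and a localized virial argument in $H^1$, approximating $|x|^2$ by bounded weights, which is where $\alpha<N-2$ and $\alpha\ge N-4$ enter to control the nonlocal term), and second, proving the flow-invariance of the region $\{E<M(a),\,P<0\}$ on $S_a$ — this relies on the coercivity estimates and the precise gap structure between $\mathcal P_{a,+}$ and $\mathcal P_{a,-}$ furnished by Lemmas \ref{lem6.6} and \ref{lem7.21}, and on the fact, already used in Theorem \ref{thm6.3}, that minimizing sequences on $\mathcal P_{a,-}$ do not escape below the threshold $m_a+\frac{2+\alpha}{2(N+\alpha)}S_\alpha^{\frac{N+\alpha}{2+\alpha}}$.
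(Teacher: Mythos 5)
Your proposal is correct and follows essentially the same route as the paper: take the scaled initial datum $u_{\tau_0}$ with $\tau_0>1$ close to $1$ (so that $\tau^-_{\psi_0}<1$, $E(\psi_0)<\inf_{\mathcal{P}_{a,-}}E$ and $P(\psi_0)<0$), propagate the key inequality $P(\psi(t))\leq E(\psi(t))-\inf_{\mathcal{P}_{a,-}}E(v)<-\delta$ along the flow by continuity of $t\mapsto\tau^-_{\psi(t)}$, and conclude by the virial identity $\Phi''(t)=8P(\psi(t))$, with $|x|u_{\tau_0}\in L^2$ supplied by the exponential decay of Theorem \ref{thm1.3}(3). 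The only points to tighten are the proof of that key inequality (the paper obtains it from a second-order Taylor expansion of $\Psi_u$ at $\tau=1$ evaluated at $\tau_u^-$, using $\Psi_u''<0$ on $(\tau_u^-,1)$) and, in your invariance argument, the exclusion of $\mathcal{P}_{a,+}$ when $P(\psi(t_*))=0$, which requires noting $E(\psi_0)>0$.
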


\begin{remark}\label{rmk1.1}
The conditions $\alpha\geq N-4$ (i.e., $\bar{p}\geq 2$) and $\alpha<
N-2$ in Theorems \ref{thm6.2} and \ref{thm1.4} are added for
obtaining the local existence of solution to (\ref{e1.2}), see Lemma
\ref{lem5.2}. The condition $\alpha\geq N-4$ in Theorem \ref{thm1.4}
is also needed to prove the exponential decay of $u$ (see (3) in
Theorem \ref{thm1.3}) which is used to show that $|x|u\in
L^2(\mathbb{R}^N)$.

The condition $\bar{p}\geq 2$ is added since the nonlinearity
$(I_\alpha\ast|u|^{\bar{p}})|u|^{\bar{p}-2}u$ is singular when
$\bar{p}<2$. We do not know whether it can be removed. While, the
condition $\alpha< N-2$ is added for technical reason, and we guess
it can be removed.
\end{remark}

This paper is organized as follows. In Section 2, we cite some
preliminaries. Sections 3-5 are devoted to the proofs of Theorems
\ref{thm6.1}, \ref{thm6.3} and \ref{thm1.3}, respectively. In
Section 6, we first give a local existence result, and then prove
Theorems \ref{thm6.2} and \ref{thm1.4}.

\textbf{Notation}: In this paper, it is understood that all
functions, unless otherwise stated, are complex valued, but for
simplicity we write $L^r(\mathbb{R}^N)$, $W^{1,r}(\mathbb{R}^N)$,
$H^1(\mathbb{R}^N)$ $D^{1,2}(\mathbb{R}^N)$, .... For $1\leq
r<\infty$, $L^r(\mathbb{R}^N)$ is the usual Lebesgue space endowed
with the norm $\|u\|_r^r:=\int_{\mathbb{R}^N}|u|^r$,
$W^{1,r}(\mathbb{R}^N)$
 is the usual Sobolev space endowed with the
norm $\|u\|_{W^{1,r}}^r:=\|\nabla u\|_r^r+\|u\|_r^r$,
$H^1(\mathbb{R}^N)=W^{1,2}(\mathbb{R}^N)$ and
$\|u\|_{H^1}^2:=\|u\|_{W^{1,2}}^2$,
$D^{1,2}(\mathbb{R}^N):=\left\{u\in L^{2^*}(\mathbb{R}^N):|\nabla
u|\in L^{2}(\mathbb{R}^N)\right\}$. $H_{r}^1(\mathbb{R}^N)$ denotes
the subspace of functions in $H^1(\mathbb{R}^N)$ which are radially
symmetric with respect to zero.  $S_{a,r}:=S_a\cap
H_{r}^1(\mathbb{R}^N)$. $C,\ C_1,\ C_2,\ ...$ denote positive
constants, whose values can change from line to line. The notation
$A\lesssim B$ means that $A\leq CB$ for some constant $C> 0$. If
$A\lesssim B \lesssim A$, we write $A\thickapprox B$.

\section{Preliminaries}
\setcounter{section}{2} \setcounter{equation}{0}

The following Gagliardo-Nirenberg inequality can be found in
\cite{Weinstein 1983}.
\begin{lemma}\label{lem2.5}
Let $N\geq 1$ and $2<p<2^*$, then the following sharp
Gagliardo-Nirenberg inequality
\begin{equation*}
\|u\|_{p}\leq C_{N,p}\|u\|_2^{1-\gamma_p}\|\nabla u\|_2^{\gamma_p}
\end{equation*}
holds for any $u\in H^1(\mathbb{R}^N)$, where the sharp constant
$C_{N,p}$ is
\begin{equation*}
C_{N,p}^{p}=\frac{2p}{2N+(2-N)p}\left(\frac{2N+(2-N)p}{N(p-2)}\right)^{\frac{N(p-2)}{4}}\frac{1}{\|Q_{p}\|_2^{p-2}}
\end{equation*}
and $Q_p$ is the unique positive radial solution of equation
\begin{equation*}
-\Delta Q+Q=|Q|^{p-2}Q.
\end{equation*}
\end{lemma}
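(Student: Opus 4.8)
The plan is to reconstruct Weinstein's variational argument, identifying the sharp constant with the value of the Gagliardo--Nirenberg quotient at the ground state of $-\Delta Q+Q=|Q|^{p-2}Q$. First I would record that the inequality holds with \emph{some} finite constant: for $2<p<2^*$ the embedding $H^1(\mathbb{R}^N)\hookrightarrow L^p(\mathbb{R}^N)$ together with interpolation between $L^2$ and $L^{2^*}$ yields $\|u\|_p\lesssim \|u\|_2^{1-\gamma_p}\|\nabla u\|_2^{\gamma_p}$. The exponent $\gamma_p$ is forced by scaling: applying the estimate to the scale-covariant family $u\mapsto t\,u(\lambda\cdot)$ and matching homogeneities in $\lambda$ and $t$ gives exactly $\gamma_p=\frac{N}{2}-\frac{N}{p}=\frac{N(p-2)}{2p}$, so that both sides transform identically and the optimal constant is scale invariant.

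To pin down the sharp constant, I would characterize it as
\begin{equation*}
C_{N,p}^p=\sup_{u\in H^1(\mathbb{R}^N)\setminus\{0\}}\frac{\|u\|_p^p}{\|u\|_2^{(1-\gamma_p)p}\|\nabla u\|_2^{\gamma_p p}},
\end{equation*}
and show this supremum is attained. By the P\'olya--Szeg\H{o} inequality the quotient does not decrease under symmetric decreasing rearrangement, so I may restrict to radial, nonnegative, nonincreasing functions; the two scaling invariances (dilation and scalar multiplication) then let me normalize $\|u\|_2=\|\nabla u\|_2=1$ along a maximizing sequence. Such a normalized radial sequence is bounded in $H^1$, and the compact embedding $H_r^1(\mathbb{R}^N)\hookrightarrow L^p(\mathbb{R}^N)$ for $2<p<2^*$ gives strong $L^p$ convergence; since the numerator converges to the positive supremum while the denominator is weakly lower semicontinuous, the weak limit $w$ is nonzero and is a maximizer. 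Writing the Euler--Lagrange equation for $w$ and rescaling by a dilation and a multiplicative constant, I obtain a positive solution of $-\Delta Q+Q=|Q|^{p-2}Q$; by Kwong's uniqueness theorem this solution is, up to translation, the ground state $Q_p$, so the supremum equals $\|Q_p\|_p^p\big/\big(\|Q_p\|_2^{(1-\gamma_p)p}\|\nabla Q_p\|_2^{\gamma_p p}\big)$.

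It remains to evaluate this ratio, which produces the explicit formula. Multiplying the equation for $Q_p$ by $Q_p$ and integrating gives the Nehari identity $\|\nabla Q_p\|_2^2+\|Q_p\|_2^2=\|Q_p\|_p^p$, while the Poho\v{z}aev identity gives $\frac{N-2}{2}\|\nabla Q_p\|_2^2+\frac{N}{2}\|Q_p\|_2^2=\frac{N}{p}\|Q_p\|_p^p$. Solving this linear system expresses everything through $\|Q_p\|_2^2$:
\begin{equation*}
\|\nabla Q_p\|_2^2=\frac{N(p-2)}{2N+(2-N)p}\|Q_p\|_2^2,\qquad \|Q_p\|_p^p=\frac{2p}{2N+(2-N)p}\|Q_p\|_2^2.
\end{equation*}
Substituting these together with $\gamma_p p=\frac{N(p-2)}{2}$ and $(1-\gamma_p)p=\frac{2N+(2-N)p}{2}$ into the ratio, the powers of $\|Q_p\|_2^2$ combine to leave exactly $\|Q_p\|_2^{2-p}$, and a short simplification reproduces the stated value of $C_{N,p}^p$. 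The main obstacle is the existence of the maximizer in the second paragraph: because the quotient is invariant under both dilations and scalar multiplication, a maximizing sequence may spread out or concentrate, and compactness must be recovered by first rearranging to the radial class and then exploiting the scaling normalization; invoking Kwong's uniqueness theorem is the other essential (and nontrivial) external ingredient that fixes the optimizer as $Q_p$.
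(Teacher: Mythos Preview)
Your proposal is correct and is precisely Weinstein's variational argument; the paper does not supply its own proof of this lemma but simply cites \cite{Weinstein 1983}, so there is nothing to compare against beyond noting that you have faithfully reconstructed the cited result. The computations with the Nehari and Poho\v{z}aev identities check out and reproduce the stated constant.
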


The following well-known Hardy-Littlewood-Sobolev inequality  can be
found in \cite{Lieb-Loss 2001}.

\begin{lemma}\label{lem HLS}
Let $N\geq 1$, $p$, $r>1$ and $0<\beta<N$ with
$1/p+(N-\beta)/N+1/r=2$. Let $u\in L^p(\mathbb{R}^N)$ and $v\in
L^r(\mathbb{R}^N)$. Then there exists a sharp constant
$C(N,\beta,p)$, independent of $u$ and $v$, such that
\begin{equation*}
\left|\int_{\mathbb{R}^N}\int_{\mathbb{R}^N}\frac{u(x)v(y)}{|x-y|^{N-\beta}}dxdy\right|\leq
C(N,\beta,p)\|u\|_p\|v\|_r.
\end{equation*}
If $p=r=\frac{2N}{N+\beta}$, then
\begin{equation*}
C(N,\beta,p)=C_\beta(N)=\pi^{\frac{N-\beta}{2}}\frac{\Gamma(\frac{\beta}{2})}{\Gamma(\frac{N+\beta}{2})}\left\{\frac{\Gamma(\frac{N}{2})}{\Gamma(N)}\right\}^{-\frac{\beta}{N}}.
\end{equation*}
\end{lemma}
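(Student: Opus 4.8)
The plan is to split the statement into two parts of very different character: the inequality with \emph{some} finite constant, valid for all admissible exponents, and the identification of the \emph{sharp} constant $C_\beta(N)$ in the conformal case $p=r=\frac{2N}{N+\beta}$. The first part is a soft real-variable argument; the second is the deep theorem of Lieb.

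For the inequality itself I would recast the bilinear form as a convolution. Set $\lambda:=N-\beta\in(0,N)$ and $K(z):=|z|^{-\lambda}$; after relabeling, the left-hand side is $\big|\int_{\mathbb{R}^N}(K*u)(x)\,v(x)\,dx\big|$. The kernel $K$ lies in no ordinary $L^s$, but it does belong to the weak space $L^{s,\infty}$ with $s=N/\lambda=N/(N-\beta)$, since $|\{z:|z|^{-\lambda}>t\}|=c_N\,t^{-N/\lambda}$. I would then invoke the weak Young (O'Neil) inequality: if $\frac1p+\frac1s=1+\frac1q$ with $1<p,q<\infty$, then $\|K*u\|_q\lesssim\|K\|_{s,\infty}\|u\|_p$. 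Choosing $q$ by $\frac1q=1-\frac1r$ and using H\"older to pair $K*u$ with $v\in L^r$, the balance condition becomes $\frac1p+\frac1s=2-\frac1r$, which is exactly the hypothesis $\frac1p+\frac{N-\beta}{N}+\frac1r=2$. This yields the stated inequality with a finite, non-sharp constant. The weak Young inequality itself I would obtain from Marcinkiewicz interpolation: split $K=K\,\mathbf{1}_{\{|z|\le R\}}+K\,\mathbf{1}_{\{|z|>R\}}$, estimate each piece by ordinary Young into a weak-type distributional bound, optimize over $R$ at each level, and interpolate.

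For the sharp constant, which is the genuinely hard part, I would follow Lieb. First, by the Riesz rearrangement inequality, replacing $u,v$ by their symmetric-decreasing rearrangements increases the left-hand side while preserving the $L^p,L^r$ norms, so any maximizing sequence may be taken radial and decreasing. Second, specialize to the diagonal case $p=r=\frac{2N}{N+\beta}$, where the functional is invariant under the full conformal group; via stereographic projection the problem transports to $S^N$, and conformal invariance together with compactness (or the competing-symmetries argument of Carlen--Loss, alternately applying symmetric rearrangement and the conformal inversion) forces a maximizing sequence to converge to an extremal. Third, I would identify the extremal as $h(x)=(1+|x|^2)^{-(N+\beta)/2}$, unique up to translations, dilations and inversions, and evaluate the quotient on it; the resulting Beta/Gamma integrals collapse to $C_\beta(N)=\pi^{(N-\beta)/2}\frac{\Gamma(\beta/2)}{\Gamma((N+\beta)/2)}\{\Gamma(N/2)/\Gamma(N)\}^{-\beta/N}$.

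The main obstacle is plainly the sharp constant. Once weak Young is in hand the inequality is routine, but proving that the supremum is attained and that the maximizer is the specific conformal bubble $h$ requires the rearrangement-plus-conformal-symmetry machinery, and the precise value $C_\beta(N)$ emerges only after a careful Gamma-function computation. Since the present paper uses the lemma only as a tool, in practice I would cite Lieb--Loss for the sharp statement and record the convolution/weak-Young derivation above for the non-sharp inequality.
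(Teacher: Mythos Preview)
Your sketch is mathematically sound and follows the standard route to the Hardy--Littlewood--Sobolev inequality: weak Young / O'Neil for the non-sharp bound, then Lieb's rearrangement and conformal-invariance argument for the sharp constant in the diagonal case. There is no gap.

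The only point of comparison is that the paper does not prove this lemma at all: it is stated as a well-known result and attributed directly to Lieb--Loss, \emph{Analysis}. So your proposal goes well beyond what the paper does; your final sentence --- that in practice one simply cites Lieb--Loss --- is exactly what the paper opts for. What your write-up buys is a self-contained explanation of why the inequality holds and where the sharp constant comes from, which is pedagogically useful but unnecessary for the paper's purposes, since the lemma is used only as a black-box tool (via Remark~2.3 and in the estimates of Section~4).
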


\begin{remark}\label{rek1.31}
(1). By the Hardy-Littlewood-Sobolev inequality above, for any $v\in
L^s(\mathbb{R}^N)$ with $s\in(1,N/\alpha)$, $I_\alpha\ast v\in
L^{\frac{Ns}{N-\alpha s}}(\mathbb{R}^N)$ and
\begin{equation*}
\|I_\alpha\ast v\|_{L^{\frac{Ns}{N-\alpha s}}}\leq C\|v\|_{L^s},
\end{equation*}
where $C>0$ is a constant depending only on $N,\ \alpha$ and $s$.

(2). By the Hardy-Littlewood-Sobolev inequality above and the
Sobolev embedding theorem, we obtain
\begin{equation}\label{e22.4}
\begin{split}
\int_{\mathbb{R}^N}(I_\beta\ast|u|^p)|u|^p\leq
C\left(\int_{\mathbb{R}^N}|u|^{\frac{2Np}{N+\beta}}\right)^{1+\beta/N}
\leq C\|u\|_{H^1(\mathbb{R}^N)}^{2p}
\end{split}
\end{equation}
for any $p\in \left[1+\beta/N,(N+\beta)/(N-2)\right]$ if $N\geq 3$
and $p\in \left[1+\beta/N,+\infty\right)$ if $N=1, 2$, where $C>0$
is a constant depending only on $N,\ \beta$ and $p$.
\end{remark}

The following fact is used in this paper (see \cite{Li-JDE-2020}).

\begin{lemma}\label{lem33.3}
Let $N\geq 3$, $\alpha\in(0,N)$ and
$p\in\left[\frac{N+\alpha}{N},\frac{N+\alpha}{N-2}\right]$. Assume
that $\{w_n\}_{n=1}^{\infty}\subset H^1(\mathbb{R}^N)$ satisfying
$w_n\rightharpoonup w$ weakly in $H^1(\mathbb{R}^N)$ as
$n\to\infty$, then
\begin{equation*}
(I_\alpha\ast|w_n|^p)|w_n|^{p-2}w_n\rightharpoonup
(I_\alpha\ast|w|^p)|w|^{p-2}w\ \mathrm{weakly\ in\
}H^{-1}(\mathbb{R}^N)\ \mathrm{as}\ n\to\infty.
\end{equation*}
\end{lemma}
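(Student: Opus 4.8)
The plan is to recognize the statement as the weak-to-weak continuity of the nonlinear operator $F(u):=(I_\alpha\ast|u|^p)|u|^{p-2}u$ viewed as a map from $H^1(\mathbb{R}^N)$ into $H^{-1}(\mathbb{R}^N)$, where for $\varphi\in H^1(\mathbb{R}^N)$ we set $\langle F(u),\varphi\rangle:=\int_{\mathbb{R}^N}(I_\alpha\ast|u|^p)|u|^{p-2}u\,\varphi\,dx$. Since $w_n\rightharpoonup w$ in $H^1$ the sequence is bounded, and the first step is to record that $F$ is bounded: by Lemma \ref{lem HLS} and the Sobolev embedding (exactly as in Remark \ref{rek1.31}), for $p=\bar p$ one has $I_\alpha\ast|u|^p\in L^{2N/(N-\alpha)}$ and $|u|^{p-1}\in L^{2N/(\alpha+2)}$, while $\varphi\in L^{2^*}$, and the three Hölder exponents $\tfrac{N-\alpha}{2N}$, $\tfrac{\alpha+2}{2N}$, $\tfrac{N-2}{2N}$ sum to $1$, giving $|\langle F(u),\varphi\rangle|\lesssim\|u\|_{H^1}^{2p-1}\|\varphi\|_{H^1}$; for interior $p$ the $u$-exponents only move inward in $[2,2^*]$ and the bound is the same. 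Consequently $\{F(w_n)\}$ is bounded in $H^{-1}$, so since $C_c^\infty(\mathbb{R}^N)$ is dense in $H^1(\mathbb{R}^N)$ it suffices to prove $\langle F(w_n),\varphi\rangle\to\langle F(w),\varphi\rangle$ for every fixed $\varphi\in C_c^\infty(\mathbb{R}^N)$ and then upgrade to all test functions by uniform boundedness.

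I would then argue along a subsequence to exploit compactness. By Rellich--Kondrachov, after extraction $w_n\to w$ strongly in $L^s_{\mathrm{loc}}(\mathbb{R}^N)$ for every $s<2^*$ and $w_n\to w$ a.e. Fixing $\varphi\in C_c^\infty(\mathbb{R}^N)$ with support $\Omega$, I split
\begin{align*}
\langle F(w_n)-F(w),\varphi\rangle
&=\int_\Omega (I_\alpha\ast|w_n|^{p})\big(|w_n|^{p-2}w_n-|w|^{p-2}w\big)\varphi\,dx\\
&\quad+\int_\Omega \big(I_\alpha\ast(|w_n|^{p}-|w|^{p})\big)|w|^{p-2}w\,\varphi\,dx=:A_n+B_n.
\end{align*}
For $A_n$, the factor $I_\alpha\ast|w_n|^p$ is bounded in $L^{2N/(N-\alpha)}(\mathbb{R}^N)$, while on the bounded set $\Omega$ the sequence $|w_n|^{p-2}w_n$ is bounded in $L^{2N/(\alpha+2)}(\Omega)$ and converges a.e.\ to $|w|^{p-2}w$; since $N>2$ gives $\tfrac{2N}{\alpha+2}>\tfrac{2N}{N+\alpha}$, Vitali's theorem upgrades this to strong convergence in $L^{2N/(N+\alpha)}(\Omega)=\big(L^{2N/(N-\alpha)}\big)'(\Omega)$, and the Hölder pairing against the bounded factor $(I_\alpha\ast|w_n|^p)\varphi$ forces $A_n\to0$.

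For $B_n$ I move the convolution onto the fixed function by its self-adjointness, writing $B_n=\int_{\mathbb{R}^N}(|w_n|^p-|w|^p)\,(I_\alpha\ast\psi)\,dx$ with $\psi:=|w|^{p-2}w\,\varphi$, which lies in $L^{2N/(N+\alpha)}(\mathbb{R}^N)$ with compact support, so $I_\alpha\ast\psi\in L^{2N/(N-\alpha)}$ by Lemma \ref{lem HLS}. The sequence $|w_n|^p-|w|^p$ is bounded in $L^{2N/(N+\alpha)}(\mathbb{R}^N)$ and tends to $0$ a.e.; since $1<\tfrac{2N}{N+\alpha}<\infty$, boundedness together with a.e.\ convergence yields $|w_n|^p-|w|^p\rightharpoonup0$ weakly in $L^{2N/(N+\alpha)}$, and testing against the single fixed element $I_\alpha\ast\psi\in\big(L^{2N/(N+\alpha)}\big)'$ gives $B_n\to0$.

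The delicate point, and the one I expect to be the main obstacle, is precisely the nonlocal term $B_n$: globally only \emph{weak} convergence of $|w_n|^p$ is available, so one cannot localize, and the argument must hinge on pairing a weakly null sequence against one fixed dual function $I_\alpha\ast\psi$, which is where the self-adjointness of $I_\alpha\ast$ and the exact Hardy--Littlewood--Sobolev duality $\big(L^{2N/(N+\alpha)}\big)'=L^{2N/(N-\alpha)}$ are essential. The endpoint $p=\bar p$ is the other sensitive feature, since $H^1\hookrightarrow L^{2^*}$ is not compact: global boundedness is obtained in $L^{2^*}$, whereas the a.e.\ and local strong convergence needed for $A_n$ come only from Rellich in $L^s_{\mathrm{loc}}$, $s<2^*$; this causes no difficulty because compactness is invoked only on the bounded set $\Omega$. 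Finally, as the candidate limit $F(w)$ is independent of the extracted subsequence, a standard subsequence-of-subsequence argument promotes the convergence $\langle F(w_n),\varphi\rangle\to\langle F(w),\varphi\rangle$ to the full sequence for every $\varphi\in C_c^\infty(\mathbb{R}^N)$; combined with $\sup_n\|F(w_n)\|_{H^{-1}}<\infty$ and the density of $C_c^\infty$ in $H^1$, this yields $F(w_n)\rightharpoonup F(w)$ weakly in $H^{-1}(\mathbb{R}^N)$, as asserted.
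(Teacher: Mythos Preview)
Your argument is correct. The paper does not supply its own proof of this lemma; it simply records the statement and cites \cite{Li-JDE-2020}. Your self-contained treatment---bounding $F(u)=(I_\alpha\ast|u|^p)|u|^{p-2}u$ in $H^{-1}$ via Hardy--Littlewood--Sobolev and Sobolev, reducing by density to $\varphi\in C_c^\infty$, splitting $\langle F(w_n)-F(w),\varphi\rangle=A_n+B_n$, handling $A_n$ by local Rellich compactness and Vitali on the compact support of $\varphi$, and handling $B_n$ by self-adjointness of the Riesz convolution together with the fact that bounded a.e.-convergent sequences converge weakly in reflexive $L^q$---is the natural approach and goes through over the full range $p\in\big[\frac{N+\alpha}{N},\frac{N+\alpha}{N-2}\big]$. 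Your identification of the two delicate points is accurate: at the endpoint $p=\bar p$ compactness is only local, which is harmless because $A_n$ lives on $\mathrm{supp}\,\varphi$; and for the nonlocal piece $B_n$ one cannot localize, so the argument must rest on pairing a weakly null sequence against the single dual element $I_\alpha\ast\psi$, which is exactly what the HLS duality $\big(L^{2N/(N+\alpha)}\big)'=L^{2N/(N-\alpha)}$ provides.
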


The following lemma is used in this paper,  see
\cite{Berestycki-Lions 1983} for its proof.

\begin{lemma}\label{lem jx}
Let $N\geq 3$ and $ 1\leq t<+\infty$. If $u\in L^t(\mathbb{R}^N)$ is
a radial non-increasing function (i.e. $0\leq u(x)\leq u(y)$ if
$|x|\geq |y|$), then one has
\begin{equation*}
|u(x)|\leq |x|^{-N/t}\left(\frac{N}{|S^{N-1}|}\right)^{1/t}\|u\|_t,
\ x\neq 0,
\end{equation*}
where $|S^{N-1}|$  is the area of the unit sphere in $\mathbb{R}^N$.
\end{lemma}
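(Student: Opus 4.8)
The plan is to use the monotonicity of $u$ to bound its value at a point $x$ by its size over the entire ball of radius $|x|$ centred at the origin, and then to estimate that ball by all of $\mathbb{R}^N$. First I would fix $x\neq 0$ and set $R:=|x|$. Since $u$ is radial and non-increasing, every $y$ with $|y|\leq R$ satisfies $|y|\leq |x|$, hence $u(y)\geq u(x)\geq 0$ by hypothesis. Therefore, restricting the integral defining $\|u\|_t^t$ to the ball $B_R:=\{y\in\mathbb{R}^N:|y|<R\}$ and using this pointwise comparison,
\begin{equation*}
\|u\|_t^t=\int_{\mathbb{R}^N}|u(y)|^t\,dy\geq \int_{B_R}|u(y)|^t\,dy\geq |u(x)|^t\,|B_R|.
\end{equation*}

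Next I would insert the elementary volume formula $|B_R|=\frac{|S^{N-1}|}{N}R^N$, which follows at once by integrating in polar coordinates, to obtain
\begin{equation*}
\|u\|_t^t\geq |u(x)|^t\,\frac{|S^{N-1}|}{N}|x|^N.
\end{equation*}
Rearranging for $|u(x)|^t$ and taking $t$-th roots (legitimate since $1\leq t<\infty$ and both sides are non-negative) yields exactly
\begin{equation*}
|u(x)|\leq \left(\frac{N}{|S^{N-1}|}\right)^{1/t}|x|^{-N/t}\|u\|_t,
\end{equation*}
as claimed.

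The argument has essentially no serious obstacle: it is a one-line Strauss-type estimate whose only ingredients are the monotonicity hypothesis, which supplies the comparison $u(y)\geq u(x)$ on $B_R$, and the scaling of the ball volume in $R=|x|$. The sole point deserving a word of care is that $u$ is a priori defined only almost everywhere, so the stated pointwise inequality should be read for the radial non-increasing representative of $u$; this choice does not alter the $L^t$ integral, hence neither the right-hand side nor the constant $\left(N/|S^{N-1}|\right)^{1/t}$ is affected.
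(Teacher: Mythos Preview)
Your argument is correct and is exactly the classical one: the paper does not supply its own proof of this lemma but merely refers to \cite{Berestycki-Lions 1983}, where the same monotonicity-plus-ball-volume estimate is carried out. There is nothing to add.
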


The following Poho\v{z}aev identity is cited from \cite{Li-Ma 2020},
where the proof is given for $\lambda>0$ but it clearly extends to
 $\lambda\in \mathbb{R}$.
\begin{lemma}\label{lem3.9}
Let $N\geq 3$, $\alpha\in (0,N)$, $\lambda\in \mathbb{R}$, $\mu\in
\mathbb{R}$, $p\in
\left[\frac{N+\alpha}{N},\frac{N+\alpha}{N-2}\right]$ and $q\in
[2,2^*]$. If $u\in H^1(\mathbb{R}^N)$ is a solution to (\ref{e1.4}),
then $u$ satisfies the Poho\v{z}aev identity
\begin{align*}
\frac{N-2}{2}\int_{\mathbb{R}^N}|\nabla u|^2
=\frac{N\lambda}{2}\int_{\mathbb{R}^N}|u|^2+\frac{N+\alpha}{2p}\int_{\mathbb{R}^N}(I_{\alpha}\ast|u|^{p})|u|^{p}+\frac{\mu
N}{q}\int_{\mathbb{R}^N}|u|^{q}.
\end{align*}
\end{lemma}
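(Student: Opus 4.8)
The plan is to obtain the identity by testing equation (\ref{e1.4}) against the infinitesimal generator of dilations $x\cdot\nabla\bar u$ and taking real parts, while controlling the lack of global integrability and the boundary behaviour through a localization argument. Before any computation I would upgrade the regularity of the solution: starting from $u\in H^1(\mathbb{R}^N)$, the assumptions $p\in[\frac{N+\alpha}{N},\frac{N+\alpha}{N-2}]$ and $q\in[2,2^*]$ together with Remark \ref{rek1.31} and Lemma \ref{lem33.3} show that the right-hand side of (\ref{e1.4}) lies in a suitable Lebesgue space, so that a standard elliptic bootstrap gives $u\in W^{2,s}_{\mathrm{loc}}(\mathbb{R}^N)$ (in particular $u\in H^2_{\mathrm{loc}}$) with enough integrability to make all the integrations by parts below legitimate; in particular $\Delta u\,(x\cdot\nabla\bar u)$ and $(I_\alpha\ast|u|^p)|u|^{p-2}u\,(x\cdot\nabla\bar u)$ are locally integrable.

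Next I would fix a cutoff $\phi_R(x)=\phi(x/R)$ with $\phi\equiv1$ on $B_1$ and $\phi\equiv0$ outside $B_2$, multiply (\ref{e1.4}) by $\phi_R\,(x\cdot\nabla\bar u)$, integrate and take real parts. For the Laplacian term an integration by parts gives, after $R\to\infty$, $\mathrm{Re}\int(-\Delta u)(x\cdot\nabla\bar u)=-\frac{N-2}{2}\int|\nabla u|^2$, the cutoff errors vanishing since they are controlled by $\int_{B_{2R}\setminus B_R}|\nabla u|^2\to0$ (using $|x||\nabla\phi_R|\lesssim1$ and $|\nabla u|^2\in L^1$). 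The linear and local nonlinear terms are handled by the pointwise identities $\mathrm{Re}\,[\,u\,(x\cdot\nabla\bar u)\,]=\tfrac12\,x\cdot\nabla|u|^2$ and $\mathrm{Re}\,[\,|u|^{q-2}u\,(x\cdot\nabla\bar u)\,]=\tfrac1q\,x\cdot\nabla|u|^q$, followed by $\int x\cdot\nabla g=-N\int g$; they produce $-\frac{N\lambda}{2}\int|u|^2$ and $-\frac{N\mu}{q}\int|u|^q$ respectively. Note that $\lambda$ enters only through this single term, linearly and with no sign restriction, which makes the passage from $\lambda>0$ to $\lambda\in\mathbb{R}$ claimed after the statement immediate.

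The delicate term is the nonlocal one, where $\mathrm{Re}\,[\,(I_\alpha\ast|u|^p)|u|^{p-2}u\,(x\cdot\nabla\bar u)\,]=\tfrac1p\,(I_\alpha\ast|u|^p)\,(x\cdot\nabla|u|^p)$, so that after localization I must evaluate $\int(I_\alpha\ast|u|^p)(x\cdot\nabla|u|^p)$. Writing this as the double integral $A_\alpha(N)\int\int|x-y|^{-(N-\alpha)}|u(y)|^p\,(x\cdot\nabla_x|u(x)|^p)$ and symmetrizing in $x,y$, the homogeneity of degree $\alpha-N$ of the Riesz kernel yields the scaling identity $\int(I_\alpha\ast|u|^p)(x\cdot\nabla|u|^p)=-\frac{N+\alpha}{2}\int(I_\alpha\ast|u|^p)|u|^p$, which one reads off by differentiating $\int(I_\alpha\ast|u(\cdot/t)|^p)|u(\cdot/t)|^p=t^{N+\alpha}\int(I_\alpha\ast|u|^p)|u|^p$ at $t=1$. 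This contributes $-\frac{N+\alpha}{2p}\int(I_\alpha\ast|u|^p)|u|^p$. Collecting the four terms and multiplying by $-1$ gives exactly the asserted identity.

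The main obstacle is the rigorous justification of these formal manipulations for a merely $H^1$ (and a priori complex-valued) solution: ensuring that $x\cdot\nabla|u|^p$ and the symmetrized double integral are genuinely integrable, and that all cutoff error terms vanish as $R\to\infty$. The Hardy--Littlewood--Sobolev inequality (Lemma \ref{lem HLS}) in the form of Remark \ref{rek1.31}, combined with the improved regularity from the bootstrap, controls the nonlocal integrals, while the $L^2$ bound on $\nabla u$ and the Sobolev/HLS bounds on $u$ kill the boundary contributions. Since this is precisely the computation carried out in \cite{Li-Ma 2020} for $\lambda>0$, and $\lambda$ appears only in a term linear in $\lambda$, I would display the above computation and invoke that reference for the technical passage to the limit.
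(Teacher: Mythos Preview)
Your proposal is correct and matches the paper's approach: the paper does not actually prove this lemma but cites it from \cite{Li-Ma 2020}, noting that the proof there is given for $\lambda>0$ and extends to $\lambda\in\mathbb{R}$ because $\lambda$ enters only linearly. You arrive at exactly this observation after sketching the standard localization--dilation argument that underlies the cited result, so there is nothing to add.
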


\begin{lemma}\label{lem3.8}
Assume the conditions in Lemma \ref{lem3.9} hold. If $u\in
H^1(\mathbb{R}^N)$ is a solution to (\ref{e1.4}), then $P(u)=0$.
\end{lemma}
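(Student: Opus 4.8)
The plan is to obtain $P(u)=0$ by combining two identities satisfied by any solution $u\in H^1(\mathbb{R}^N)$ of \eqref{e1.4}: the Poho\v{z}aev identity of Lemma~\ref{lem3.9}, and the ``Nehari-type'' identity obtained by pairing \eqref{e1.4} with $u$ itself. First I would justify that $u$ is an admissible test function. Since $u\in H^1(\mathbb{R}^N)$, $p=\bar p=\frac{N+\alpha}{N-2}$ and $q\in(2,2^*)$, the estimate \eqref{e22.4} of Remark~\ref{rek1.31} together with the Sobolev embedding guarantees that $(I_\alpha\ast|u|^{\bar p})|u|^{\bar p}$ and $|u|^{q}$ are integrable, so multiplying \eqref{e1.4} by $\bar u$, integrating and taking real parts yields
\begin{equation*}
\int_{\mathbb{R}^N}|\nabla u|^2=\lambda\int_{\mathbb{R}^N}|u|^2+\int_{\mathbb{R}^N}(I_\alpha\ast|u|^{\bar p})|u|^{\bar p}+\mu\int_{\mathbb{R}^N}|u|^{q}.
\end{equation*}

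Next I would eliminate the term $\lambda\int_{\mathbb{R}^N}|u|^2$. Solving for it in the identity above and inserting the result into the Poho\v{z}aev identity
\begin{equation*}
\frac{N-2}{2}\int_{\mathbb{R}^N}|\nabla u|^2=\frac{N\lambda}{2}\int_{\mathbb{R}^N}|u|^2+\frac{N+\alpha}{2\bar p}\int_{\mathbb{R}^N}(I_\alpha\ast|u|^{\bar p})|u|^{\bar p}+\frac{\mu N}{q}\int_{\mathbb{R}^N}|u|^{q},
\end{equation*}
all $\lambda$-dependence disappears and one is left with a linear relation among $\|\nabla u\|_2^2$, $\int_{\mathbb{R}^N}(I_\alpha\ast|u|^{\bar p})|u|^{\bar p}$ and $\|u\|_q^q$. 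I would then simplify its coefficients using $\bar p=\frac{N+\alpha}{N-2}$, so that $\frac{N+\alpha}{2\bar p}=\frac{N-2}{2}$, and the definition $\gamma_q=\frac{N}{2}-\frac{N}{q}$, so that $N\bigl(\tfrac{1}{q}-\tfrac{1}{2}\bigr)=-\gamma_q$. Collecting terms, the relation reduces to
\begin{equation*}
\int_{\mathbb{R}^N}|\nabla u|^2-\int_{\mathbb{R}^N}(I_\alpha\ast|u|^{\bar p})|u|^{\bar p}-\mu\gamma_q\int_{\mathbb{R}^N}|u|^{q}=0,
\end{equation*}
which is exactly $P(u)=0$.

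There is no genuine obstacle here: the argument is a short elimination computation, relying only on the already-available Poho\v{z}aev identity and the sharp algebraic cancellation of the nonlocal coefficient that occurs precisely at the upper critical exponent $p=\bar p$. The only point requiring a word of care is the validity of the integration-by-parts identity obtained by testing with $u$, which is where the growth restrictions on $p$ and $q$ and the integrability furnished by the Hardy--Littlewood--Sobolev inequality (Lemma~\ref{lem HLS}, via Remark~\ref{rek1.31}) are used; once that is in place the remainder is pure algebra.
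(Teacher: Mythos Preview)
Your proof is correct and follows essentially the same route as the paper: test \eqref{e1.4} with $u$ to obtain the Nehari identity, then eliminate $\lambda\|u\|_2^2$ using the Poho\v{z}aev identity of Lemma~\ref{lem3.9}. The paper merely states that the two combine to give $P(u)=0$, whereas you spell out the algebra and justify the integrability; the only minor mismatch is that the hypotheses of Lemma~\ref{lem3.9} allow $q\in[2,2^*]$ while you write $q\in(2,2^*)$, but this is inconsequential.
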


\begin{proof}
Multiplying (\ref{e1.4}) by $u$ and integrating over $\mathbb{R}^N$,
we derive
\begin{equation*}
\int_{\mathbb{R}^N}|\nabla
u|^2=\lambda\int_{\mathbb{R}^N}|u|^2+\int_{\mathbb{R}^N}(I_{\alpha}\ast|u|^{p})|u|^{p}+\mu\int_{\mathbb{R}^N}|u|^{q},
\end{equation*}
which combines with the Poho\v{z}aev identity from Lemma
\ref{lem3.9} give that $P(u)=0$.
\end{proof}

\section{Existence of normalized ground state standing waves}

\setcounter{section}{3} \setcounter{equation}{0}

In this section, we prove Theorem \ref{thm6.1}. We first study the
lower bound of $E(u)$. By (\ref{e3.14}) and  Lemma \ref{lem2.5}, we
obtain,  for any $u\in S_a$,
\begin{equation}\label{e3.11}
\begin{split}
E(u) &\geq \frac{1}{2}\|\nabla
u\|_2^2-\frac{1}{2\bar{p}}S_\alpha^{-\bar{p}}\|\nabla
u\|_2^{2\bar{p}}-\frac{\mu}{q} C_{N,q}^qa^{q(1-\gamma_q)/2}\|\nabla
u\|_2^{q\gamma_q}\\
&=\|\nabla u\|_2^2f_{\mu,a}(\|\nabla u\|_2^2)
\end{split}
\end{equation}
with
\begin{equation}\label{e6.1}
f_{\mu,a}(\rho):=\frac{1}{2}-\frac{1}{2\bar{p}}S_\alpha^{-\bar{p}}\rho^{\bar{p}-1}
-\frac{\mu}{q}C_{N,q}^qa^{\frac{q(1-\gamma_q)}{2}}\rho^{\frac{q\gamma_q-2}{2}},\
\ \rho\in (0,\infty).
\end{equation}

Next we study the properties of $f_{\mu,a}(\rho)$.

\begin{lemma}\label{lem6.1}
Let $N\geq 3$, $\alpha\in (0,N)$, $\mu>0$, $a>0$, $p=\bar{p}$,
 $q\in (2,2+\frac{4}{N})$ and $K$ be
defined in (\ref{e1.14}). Then
\begin{equation}\label{e1.15}
\max_{\rho>0}f_{\mu,a}(\rho) \left\{\begin{array}{ll}
>0,&  \ \mathrm{if}\ \mu a^{\frac{q(1-\gamma_q)}{2}}<(2K)^{\frac{q\gamma_q-2\bar{p}}{2(\bar{p}-1)}},\\
=0,&  \ \mathrm{if}\ \mu a^{\frac{q(1-\gamma_q)}{2}}=(2K)^{\frac{q\gamma_q-2\bar{p}}{2(\bar{p}-1)}},\\
<0,&  \ \mathrm{if}\ \mu
a^{\frac{q(1-\gamma_q)}{2}}>(2K)^{\frac{q\gamma_q-2\bar{p}}{2(\bar{p}-1)}}.
\end{array}\right.
\end{equation}
\end{lemma}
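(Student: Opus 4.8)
The plan is to analyze the single-variable function $f_{\mu,a}(\rho)$ defined in \eqref{e6.1} by standard calculus. First I would observe that $f_{\mu,a}$ is the sum of a constant and two negative power terms: writing $A:=\frac{1}{2\bar p}S_\alpha^{-\bar p}$ and $B:=\frac{\mu}{q}C_{N,q}^q a^{q(1-\gamma_q)/2}$, we have
\begin{equation*}
f_{\mu,a}(\rho)=\frac12-A\rho^{\bar p-1}-B\rho^{\frac{q\gamma_q-2}{2}}.
\end{equation*}
Since $q\in(2,2+\tfrac4N)$ gives $q\gamma_q<2$, the exponent $\frac{q\gamma_q-2}{2}$ is negative, so the $B$-term blows up as $\rho\to0^+$ and decays to $0$ as $\rho\to\infty$; the $A$-term is increasing in $\rho$. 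Hence $f_{\mu,a}(\rho)\to-\infty$ as $\rho\to0^+$ and as $\rho\to\infty$, and the derivative argument below shows $f_{\mu,a}$ has a unique interior critical point, which is its global maximum.

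Next I would compute $f_{\mu,a}'(\rho)=0$. After multiplying by an appropriate power of $\rho$, this reduces to a single power equation in $\rho$, so there is exactly one solution $\bar\rho=\bar\rho(\mu,a)$, and one checks $f_{\mu,a}'>0$ on $(0,\bar\rho)$ and $f_{\mu,a}'<0$ on $(\bar\rho,\infty)$, confirming $\bar\rho$ is the global maximizer. The cleanest route, however, is to bypass $\bar\rho$ and estimate $\max_\rho f_{\mu,a}$ directly. The key structural point is that only the product $\mu a^{q(1-\gamma_q)/2}$ — equivalently $B$ — enters $f_{\mu,a}$, so $\max_\rho f_{\mu,a}$ is a function of $B$ alone, and it is strictly decreasing in $B$ (for fixed $\rho$, $f_{\mu,a}(\rho)$ is strictly decreasing in $B$, and the max of a decreasing family is decreasing; strictness follows since the maximizer has $\bar\rho>0$). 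Therefore there is a unique threshold value $B_0$ of $B$ at which $\max_\rho f_{\mu,a}=0$, with the max being positive for $B<B_0$ and negative for $B>B_0$. It then remains to identify $B_0$ and show it corresponds exactly to $\mu a^{q(1-\gamma_q)/2}=(2K)^{\frac{q\gamma_q-2\bar p}{2(\bar p-1)}}$ with $K$ as in \eqref{e1.14}.

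For that identification I would work at the critical case: $\max_\rho f_{\mu,a}=0$ means the graph of $y=\tfrac12$ is tangent from above to $y=A\rho^{\bar p-1}+B\rho^{\frac{q\gamma_q-2}{2}}$, i.e. there is $\rho_*$ with $A\rho_*^{\bar p-1}+B\rho_*^{\frac{q\gamma_q-2}{2}}=\tfrac12$ and $(\bar p-1)A\rho_*^{\bar p-2}+\frac{q\gamma_q-2}{2}B\rho_*^{\frac{q\gamma_q-4}{2}}=0$. Solving the second equation gives $B$ in terms of $\rho_*$ (and $A$), substituting into the first gives $\rho_*$ explicitly as a power of $A$ — this is precisely $\rho_0$ in \eqref{e**} — and then back-substitution yields $B$, hence $B_0$, as the product of an explicit constant and a power of $S_\alpha^{\bar p}$; matching exponents and constants against \eqref{e1.14} produces the stated threshold $(2K)^{\frac{q\gamma_q-2\bar p}{2(\bar p-1)}}$. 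I expect the main obstacle to be purely bookkeeping: carrying the various exponents $\bar p-1$, $\frac{q\gamma_q-2}{2}$, $\frac{2\bar p-q\gamma_q}{2(\bar p-1)}$ through the tangency computation without arithmetic slips, and verifying that the constant $C_{N,q}^q$ and the factors of $\bar p$, $q$, $2-q\gamma_q$ assemble into exactly the expression defining $K$. No genuinely hard analysis is involved — everything reduces to monotonicity of $\max_\rho f_{\mu,a}$ in the single parameter $B$ together with one explicit algebraic evaluation at the tangency point.
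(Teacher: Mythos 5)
Your proposal is correct and follows essentially the same route as the paper: elementary one-variable calculus for $f_{\mu,a}$, uniqueness of its interior critical point, and determination of the sign of the maximum. The paper simply substitutes the explicit critical point $\rho_{\mu,a}$ back into $f_{\mu,a}$ to obtain $\max_{\rho>0}f_{\mu,a}(\rho)=\frac12-K\bigl[\mu a^{q(1-\gamma_q)/2}\bigr]^{\frac{2(\bar p-1)}{2\bar p-q\gamma_q}}$ and reads off the trichotomy directly, which is the same algebra that your monotonicity-in-$B$ plus tangency argument repackages.
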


\begin{proof}
By the definition of $f_{\mu,a}(\rho)$, we have that
\begin{equation*}
f_{\mu,a}'(\rho)=-\frac{\bar{p}-1}{2\bar{p}}S_\alpha^{-\bar{p}}\rho^{\bar{p}-2}
-\frac{\mu}{q}\frac{q\gamma_q-2}{2}C_{N,q}^qa^{\frac{q(1-\gamma_q)}{2}}\rho^{\frac{q\gamma_q-2}{2}-1}.
\end{equation*}
Hence, the equation $f_{\mu,a}'(\rho)=0$ has a unique solution given
by
\begin{equation}\label{e6.2}
\rho_{\mu,a}=\left(\frac{\bar{p}\mu(2-q\gamma_q)}{q(\bar{p}-1)}C_{N,q}^qa^{\frac{q(1-\gamma_q)}{2}}
S_\alpha^{\bar{p}}\right)^{\frac{2}{2\bar{p}-q\gamma_q}}.
\end{equation}
Taking into account that $f_{\mu,a}(\rho)\to -\infty$ as $\rho\to
0^+$ and $f_{\mu,a}(\rho)\to -\infty$ as $\rho\to +\infty$, we
obtain that $\rho_{\mu,a}$ is the unique global maximum point of
$f_{\mu,a}(\rho)$ and the maximum value is
\begin{equation*}
\max_{\rho>0}f_{\mu,a}(\rho)=f_{\mu,a}(\rho_{\mu,a})=\frac{1}{2}
-K\left[\mu
a^{\frac{q(1-\gamma_q)}{2}}\right]^{\frac{2(\bar{p}-1)}{2\bar{p}-q\gamma_q}},
\end{equation*}
which implies (\ref{e1.15}) holds.
\end{proof}

\begin{lemma}\label{lem6.2}
Let $N\geq 3$, $\alpha\in (0,N)$, $\mu>0$, $p=\bar{p}$ and $q\in
(2,2+\frac{4}{N})$. If $a_1>0$ and  $\rho_1>0$ are such that
$f_{\mu,a_1}(\rho_1)\geq 0$, then for any $a_2\in (0,a_1)$, we have
\begin{equation}\label{e6.4}
f_{\mu,a_2}(\rho_2)> 0\ \mathrm{for}\ \ \rho_2\in
\left[\frac{a_2}{a_1}\rho_1,\rho_1\right].
\end{equation}
\end{lemma}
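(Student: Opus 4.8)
The plan is to compare $f_{\mu,a_2}(\rho_2)$ directly with $f_{\mu,a_1}(\rho_1)$ and to show that the former is strictly larger; since $f_{\mu,a_1}(\rho_1)\geq 0$ by hypothesis, this yields $f_{\mu,a_2}(\rho_2)>0$, which is exactly (\ref{e6.4}). First I would record the signs of the exponents appearing in (\ref{e6.1}): since $\bar{p}>1$ one has $\bar{p}-1>0$; since $q<2+\frac{4}{N}$ one has $q\gamma_q = N\frac{q-2}{2}<2$, so the $\rho$-exponent $\frac{q\gamma_q-2}{2}$ is negative; and since $q<2+\frac{4}{N}<2^*$ one has $\gamma_q<1$, so $1-\gamma_q>0$. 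It is convenient to split
\begin{equation*}
f_{\mu,a}(\rho)=\frac12-A(\rho)-B(a,\rho),\qquad A(\rho):=\frac{1}{2\bar{p}}S_\alpha^{-\bar{p}}\rho^{\bar{p}-1},\qquad B(a,\rho):=\frac{\mu}{q}C_{N,q}^q\,a^{\frac{q(1-\gamma_q)}{2}}\rho^{\frac{q\gamma_q-2}{2}},
\end{equation*}
so that $A$ is strictly increasing in $\rho$, while $B$ is decreasing in $\rho$ and increasing in $a$.

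Now fix $a_2\in(0,a_1)$ and $\rho_2\in[\frac{a_2}{a_1}\rho_1,\rho_1]$. Since $\rho_2\leq\rho_1$ and $A$ is increasing, $A(\rho_1)\geq A(\rho_2)$. The main point is to prove $B(a_1,\rho_1)>B(a_2,\rho_2)$, and for this I would compute the ratio
\begin{equation*}
\frac{B(a_1,\rho_1)}{B(a_2,\rho_2)}=\left(\frac{a_1}{a_2}\right)^{\frac{q(1-\gamma_q)}{2}}\left(\frac{\rho_2}{\rho_1}\right)^{\frac{2-q\gamma_q}{2}}.
\end{equation*}
Using $\rho_2/\rho_1\geq a_2/a_1$ together with the positivity of the exponent $\frac{2-q\gamma_q}{2}$, this ratio is bounded below by $\left(\frac{a_1}{a_2}\right)^{\frac{q(1-\gamma_q)}{2}}\left(\frac{a_2}{a_1}\right)^{\frac{2-q\gamma_q}{2}}=\left(\frac{a_1}{a_2}\right)^{\frac{q-2}{2}}$, which is $>1$ because $a_1/a_2>1$ and $q>2$. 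Here the algebraic identity $\frac{q(1-\gamma_q)}{2}-\frac{2-q\gamma_q}{2}=\frac{q-2}{2}$ — the cancellation of the two $\gamma_q$ terms — is what makes the estimate work, and it is essentially the only delicate bookkeeping point.

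Combining the two estimates gives
\begin{equation*}
f_{\mu,a_2}(\rho_2)=\frac12-A(\rho_2)-B(a_2,\rho_2)>\frac12-A(\rho_1)-B(a_1,\rho_1)=f_{\mu,a_1}(\rho_1)\geq 0,
\end{equation*}
which proves (\ref{e6.4}). I do not anticipate any genuine obstacle: this is a two-term comparison, and the only care needed is in tracking which exponents are positive and which are negative, and in using the hypothesis $\rho_2\geq\frac{a_2}{a_1}\rho_1$ in precisely the direction that controls the negative power of $\rho_2$ hidden in $B(a_2,\rho_2)$. (Alternatively, one could note that $f_{\mu,a_2}$ is unimodal by the computation in the proof of Lemma \ref{lem6.1}, so its minimum over $[\frac{a_2}{a_1}\rho_1,\rho_1]$ is attained at an endpoint, and then check positivity only at the two endpoints; but the direct comparison above is shorter.)
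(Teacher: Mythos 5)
Your proof is correct and rests on the same key computation as the paper's: the cancellation $\frac{q(1-\gamma_q)}{2}-\frac{2-q\gamma_q}{2}=\frac{q-2}{2}$ is exactly what appears in the paper's direct evaluation of $f_{\mu,a_2}\bigl(\frac{a_2}{a_1}\rho_1\bigr)$. The only difference is organizational: the paper checks positivity at the two endpoints and then invokes the unimodality of $f_{\mu,a_2}$ from Lemma \ref{lem6.1} (the route you mention in your closing parenthetical), whereas your pointwise comparison via $\rho_2\le\rho_1$ and $\rho_2/\rho_1\ge a_2/a_1$ treats every $\rho_2$ in the interval directly and so dispenses with that appeal.
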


\begin{proof}
It is obvious that $f_{\mu,a_2}(\rho_1)>f_{\mu,a_1}(\rho_1)\geq 0$,
and by direct calculation,
\begin{equation*}
\begin{split}
f_{\mu,a_2}\left(\frac{a_2}{a_1}\rho_1\right)& =\frac{1}{2}
-\frac{1}{2\bar{p}}S_\alpha^{-\bar{p}}\left(\frac{a_2}{a_1}\right)^{\bar{p}-1}\rho_1^{\bar{p}-1}
-\frac{\mu}{q}C_{N,q}^q\left(\frac{a_2}{a_1}\right)^{\frac{q}{2}-1}a_1^{\frac{q(1-\gamma_q)}{2}}\rho_1^{\frac{q\gamma_q-2}{2}}\\
&> \frac{1}{2}
-\frac{1}{2\bar{p}}S_\alpha^{-\bar{p}}\rho_1^{\bar{p}-1}
-\frac{\mu}{q}C_{N,q}^qa_1^{\frac{q(1-\gamma_q)}{2}}\rho_1^{\frac{q\gamma_q-2}{2}}=f_{\mu,a_1}(\rho_1)\geq
0.
\end{split}
\end{equation*}
It follows from the properties of  $f_{\mu,a_2}(\rho)$ studied in
Lemma \ref{lem6.1} that (\ref{e6.4}) holds.
\end{proof}

By Lemma \ref{lem6.1}, the domain $\{(\mu,a)\in \mathbb{R}^2:
\mu>0,\ a>0\}$ is divided into three parts $\Omega_1$, $\Omega_2$
and $\Omega_3$ by the curve $\mu
a^{\frac{q(1-\gamma_q)}{2}}=(2K)^{\frac{q\gamma_q-2\bar{p}}{2(\bar{p}-1)}}$
with
\begin{equation*}
\Omega_1=\{(\mu,a)\in \mathbb{R}^2: \mu>0,\ a>0, \ \mu
a^{\frac{q(1-\gamma_q)}{2}}<(2K)^{\frac{q\gamma_q-2\bar{p}}{2(\bar{p}-1)}}\},
\end{equation*}
\begin{equation*}
\Omega_2=\{(\mu,a)\in \mathbb{R}^2: \mu>0,\ a>0, \ \mu
a^{\frac{q(1-\gamma_q)}{2}}=(2K)^{\frac{q\gamma_q-2\bar{p}}{2(\bar{p}-1)}}\}
\end{equation*}
and
\begin{equation*}
\Omega_3=\{(\mu,a)\in \mathbb{R}^2: \mu>0,\ a>0, \ \mu
a^{\frac{q(1-\gamma_q)}{2}}>(2K)^{\frac{q\gamma_q-2\bar{p}}{2(\bar{p}-1)}}\}.
\end{equation*}
In this paper, we will consider the domain $\Omega_1\cup\Omega_2$.
For fixed $\mu>0$, define $a_0$ such that
\begin{equation}\label{e6.3}
\mu
a_0^{\frac{q(1-\gamma_q)}{2}}=(2K)^{\frac{q\gamma_q-2\bar{p}}{2(\bar{p}-1)}}.
\end{equation}
Then $\Omega_1\cup\Omega_2=\{(\mu,a)\in\mathbb{R}^2,\mu>0, 0<a\leq
a_0\}$. Note that $\rho_0$ defined in (\ref{e**}) is
$\rho_{\mu,a_0}$, and by Lemmas \ref{lem6.1} and \ref{lem6.2},
$f_{\mu,a_0}(\rho_0)=0$ and $f_{\mu,a}(\rho_0)>0$ for $a\in
(0,a_0)$. Hence $\inf_{u\in
\partial V_a}E(u)\geq 0$. Moreover, $V_a$ is a potential well, see
Lemma \ref{lem6.3}.

For future use, we study the properties of $\Psi_u(\tau)$ defined in
(\ref{e6.6}).

\begin{lemma}\label{lem6.6}
Let $N\geq 3$, $\alpha\in (0,N)$, $\mu>0$, $p=\bar{p}$,
$q\in(2,2+\frac{4}{N})$ and $a\in (0, a_0]$.Then for every $u\in
S_a$, the function $\Psi_u(\tau)$ has exactly two critical points
$\tau_u^+$ and $\tau_u^-$ with $0<\tau_u^+<\tau_u^-$. Moreover:

(1) $\tau_u^+$ is a local minimum point for $\Psi_u(\tau)$,
$E(u_{\tau_u^+})<0$ and $u_{\tau_u^+}\in V_a$.

(2) $\tau_u^-$ is a global maximum point for $\Psi_u(\tau)$,
$\Psi_u'(\tau)<0$ for $\tau>\tau_u^-$ and
\begin{equation*}
E(u_{\tau_u^-})\geq \inf_{u\in \partial V_a}E(u)\geq 0.
\end{equation*}
In particular, if $a\in (0, a_0)$, then $\inf_{u\in \partial
V_a}E(u)> 0$.

(3) $\Psi_u''(\tau_u^-)<0$ and the maps $u\in S_a\mapsto \tau_u^-\in
\mathbb{R}$ is of class $C^1$.
\end{lemma}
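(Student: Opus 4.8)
The plan is to analyze the fiber map $\Psi_u(\tau)$ by rescaling it into a one-variable problem governed precisely by the function $f_{\mu,a}$ studied in Lemmas \ref{lem6.1}--\ref{lem6.2}. First I would record the derivative
\begin{equation*}
\Psi_u'(\tau)=\tau\|\nabla u\|_2^2-\tau^{2\bar p-1}\int_{\mathbb{R}^N}(I_\alpha\ast|u|^{\bar p})|u|^{\bar p}-\mu\gamma_q\tau^{q\gamma_q-1}\|u\|_q^q,
\end{equation*}
so that $\Psi_u'(\tau)=0$ is equivalent, after dividing by $\tau$, to
\begin{equation*}
g_u(\tau):=\|\nabla u\|_2^2-\tau^{2\bar p-2}\int_{\mathbb{R}^N}(I_\alpha\ast|u|^{\bar p})|u|^{\bar p}-\mu\gamma_q\tau^{q\gamma_q-2}\|u\|_q^q=0 .
\end{equation*}
Since $2<q<2+\tfrac4N$ gives $q\gamma_q<2<2\bar p$, the exponents $q\gamma_q-2<0<2\bar p-2$, so $g_u(\tau)\to-\infty$ as $\tau\to0^+$ (from the $\mu$-term), $g_u(\tau)\to-\infty$ as $\tau\to+\infty$ (from the Choquard term), and $g_u$ has a unique interior maximum. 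Hence $g_u$ has at most two zeros, and exactly two provided its maximum value is positive. To see that the maximum is positive one uses that $\|\nabla u_\tau\|_2^2=\tau^2\|\nabla u\|_2^2$ and that
\begin{equation*}
E(u_\tau)=\|\nabla u_\tau\|_2^2\, f_{\mu,a}\!\left(\|\nabla u_\tau\|_2^2\right)+\text{(nonnegative correction from the sharp-constant gap)},
\end{equation*}
more precisely the bound (\ref{e3.11}) shows $E(u_\tau)\ge \|\nabla u_\tau\|_2^2 f_{\mu,a}(\|\nabla u_\tau\|_2^2)$; combined with $f_{\mu,a}(\rho_0)\ge 0$ from Lemma \ref{lem6.1} (equality at $a=a_0$) and the fact that $f_{\mu,a}>0$ on a neighbourhood of its maximizer when $a<a_0$, this forces $\Psi_u$ to take strictly negative values for small $\tau>0$ and to be nonnegative at the scaling level where $\|\nabla u_\tau\|_2^2=\rho_0$, hence $\Psi_u$ cannot be monotone and must have two critical points $0<\tau_u^+<\tau_u^-$.

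For part (1): since $\tau_u^+$ is the first zero of $\Psi_u'$ with $\Psi_u'<0$ to its left and $\Psi_u'>0$ to its right, it is a strict local minimum; $\Psi_u(\tau_u^+)=E(u_{\tau_u^+})<0$ because $\Psi_u$ starts at $0$ (value at $\tau\to0^+$ is $0$), is strictly decreasing initially, so its first critical value is negative. To get $u_{\tau_u^+}\in V_a$ I would argue that $\|\nabla u_{\tau_u^+}\|_2^2<\rho_0$: if instead $\|\nabla u_{\tau_u^+}\|_2^2\ge\rho_0$ then by the shape of $f_{\mu,a}$ (positive exactly on an interval around its maximizer, which for $a\le a_0$ contains or abuts $\rho_0$) and monotonicity considerations one contradicts $E(u_{\tau_u^+})<0$ via (\ref{e3.11}); this is the standard ``potential well'' trapping argument and I would phrase it exactly as in \cite{JEANJEAN-JENDREJ, Soave JFA}. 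For part (2): $\tau_u^-$ is the second zero, with $\Psi_u'>0$ on $(\tau_u^+,\tau_u^-)$ and $\Psi_u'<0$ on $(\tau_u^-,\infty)$, so it is the global maximum on $(\tau_u^+,\infty)$, and since $\Psi_u$ decreases on $(0,\tau_u^+)$ from $0$ to a negative value and then increases, $\tau_u^-$ is the global maximum on all of $(0,\infty)$ and $\Psi_u'<0$ for $\tau>\tau_u^-$. The lower bound $E(u_{\tau_u^-})\ge\inf_{\partial V_a}E$ follows because the ray $\{u_\tau\}$ must cross $\partial V_a$ (i.e. hit the level $\|\nabla\cdot\|_2^2=\rho_0$) at some $\bar\tau$ with $\tau_u^+<\bar\tau$ — indeed $\|\nabla u_{\tau_u^+}\|_2^2<\rho_0$ and $\|\nabla u_\tau\|_2^2\to\infty$ — and by part (2) $\tau_u^-$ is the maximizer, so $E(u_{\tau_u^-})\ge E(u_{\bar\tau})\ge\inf_{\partial V_a}E(u)$; the remarks preceding the lemma already establish $\inf_{\partial V_a}E\ge0$, with strict inequality when $a<a_0$ since then $f_{\mu,a}(\rho_0)>0$ and (\ref{e3.11}) gives $E\ge\rho_0 f_{\mu,a}(\rho_0)>0$ on $\partial V_a$.

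For part (3) and the regularity statement, the main point is the nondegeneracy $\Psi_u''(\tau_u^-)<0$. One computes $\Psi_u''(\tau)=\|\nabla u\|_2^2-(2\bar p-1)\tau^{2\bar p-2}\int(I_\alpha\ast|u|^{\bar p})|u|^{\bar p}-\mu\gamma_q(q\gamma_q-1)\tau^{q\gamma_q-2}\|u\|_q^q$ and at a critical point substitutes $\|\nabla u\|_2^2=\tau^{2\bar p-2}\int(I_\alpha\ast|u|^{\bar p})|u|^{\bar p}+\mu\gamma_q\tau^{q\gamma_q-2}\|u\|_q^q$ to get
\begin{equation*}
\tau_u^-\Psi_u''(\tau_u^-)=(2-2\bar p)(\tau_u^-)^{2\bar p-1}\!\int_{\mathbb{R}^N}\!(I_\alpha\ast|u|^{\bar p})|u|^{\bar p}+\mu\gamma_q(2-q\gamma_q)(\tau_u^-)^{q\gamma_q-1}\|u\|_q^q .
\end{equation*}
Here $2-2\bar p<0$ while $2-q\gamma_q>0$, so the sign is not automatic; however, $\tau_u^-$ being the \emph{second} zero of $g_u$, which is decreasing there, forces $g_u'(\tau_u^-)<0$, and $\tau g_u'(\tau)=\tau\Psi_u''(\tau)-\Psi_u'(\tau)$, which at $\tau_u^-$ equals $\tau_u^-\Psi_u''(\tau_u^-)$; hence $\Psi_u''(\tau_u^-)<0$ (the degenerate case $\Psi_u''(\tau_u^-)=0$ would make $\tau_u^-$ a double zero of $g_u$, i.e. $g_u\le0$ everywhere, contradicting the two-critical-point conclusion). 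The $C^1$-dependence $u\mapsto\tau_u^-$ then follows from the implicit function theorem applied to $(u,\tau)\mapsto\Psi_u'(\tau)$ on $S_a\times(0,\infty)$, using $\partial_\tau\Psi_u'(\tau_u^-)=\Psi_u''(\tau_u^-)\ne0$ and the smoothness of the maps $u\mapsto\|\nabla u\|_2^2,\ u\mapsto\int(I_\alpha\ast|u|^{\bar p})|u|^{\bar p},\ u\mapsto\|u\|_q^q$ on $H^1$. \emph{I expect the main obstacle to be the sign of $\Psi_u''(\tau_u^-)$}: the naive substitution leaves an indefinite expression, and one must instead exploit the ``second zero of a function with a single hump'' structure (equivalently, the strict decrease of $g_u$ at $\tau_u^-$) to pin the sign down — everything else is the routine fiber-map bookkeeping of \cite{Soave JDE, JEANJEAN-JENDREJ}.
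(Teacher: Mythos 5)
Your proposal is correct and is essentially the proof the paper has in mind: the paper gives no argument at all for this lemma, deferring to Lemma 2.4 of \cite{Jeanjean-Le}, and your fiber-map analysis (the single-hump function $g_u(\tau)=\Psi_u'(\tau)/\tau$ with at most two zeros, the sign of $\Psi_u''(\tau_u^-)$ pinned down via $g_u'(\tau_u^-)<0$ rather than the indefinite substitution, and the implicit function theorem for $u\mapsto\tau_u^-$) is a faithful reconstruction of that argument, modulo the harmless typo $\tau g_u'(\tau)=\tau\Psi_u''(\tau)-\Psi_u'(\tau)$, which should read $\tau^2 g_u'(\tau)=\tau\Psi_u''(\tau)-\Psi_u'(\tau)$ and changes nothing since $\Psi_u'(\tau_u^-)=0$. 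The one spot to tighten is the trapping step $u_{\tau_u^+}\in V_a$ in the borderline case $a=a_0$, where $f_{\mu,a_0}\le 0$ everywhere and vanishes only at $\rho_0$, so the contradiction ``via the shape of $f_{\mu,a}$'' as literally phrased does not bite; but your existence step already produces the negative-valued local minimizer inside $(0,\bar\tau)$ with $\bar\tau^2\|\nabla u\|_2^2=\rho_0$ (since $\Psi_u<0$ near $0$ while $\Psi_u(\bar\tau)\ge\rho_0 f_{\mu,a}(\rho_0)\ge0$), which yields $\|\nabla u_{\tau_u^+}\|_2^2<\rho_0$ directly, so nothing is actually missing.
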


\begin{proof}
The proof can be done by modifying the proof of (\cite{Jeanjean-Le},
Lemma 2.4) in a trivial way. So we omit it.
\end{proof}

\begin{lemma}\label{lem6.3}
Let $N\geq 3$, $\alpha\in (0,N)$, $p=\bar{p}$, $q\in
(2,2+\frac{4}{N})$, $\mu>0$ and $a\in (0, a_0]$. Then

(1) $m_a = \inf_{u\in V_a}E(u) < 0\leq \inf_{u\in \partial
V_a}E(u)$.

(2) If $m_a$ is reached, then any ground state to (\ref{e1.5}) is
contained in $V_a$.
\end{lemma}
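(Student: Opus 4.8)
\textbf{Proof plan for Lemma \ref{lem6.3}.}

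\emph{Part (1).} To show $m_a<0$, I would fix any $u\in S_a$ and study the fiber map $\Psi_u(\tau)=E(u_\tau)$ from \eqref{e6.6} near $\tau=0^+$. Since $q\gamma_q<2<2\bar p$ (because $q<2+\tfrac4N$ forces $q\gamma_q<2$), the dominant term of $\Psi_u(\tau)$ as $\tau\to0^+$ is $-\tfrac{\mu}{q}\tau^{q\gamma_q}\|u\|_q^q<0$, so $\Psi_u(\tau)<0$ for all small $\tau>0$. At the same time, $\|\nabla u_\tau\|_2^2=\tau^2\|\nabla u\|_2^2\to0$, so $u_\tau\in B_{\rho_0}$ and hence $u_\tau\in V_a$ for $\tau$ small. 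Therefore $m_a=\inf_{V_a}E\le E(u_\tau)<0$. (Equivalently, one may invoke Lemma \ref{lem6.6}(1): $\tau_u^+$ is a local minimum with $E(u_{\tau_u^+})<0$ and $u_{\tau_u^+}\in V_a$.) For the inequality $0\le\inf_{\partial V_a}E$: if $u\in\partial V_a$ then $\|\nabla u\|_2^2=\rho_0$, and by \eqref{e3.11} together with the discussion after \eqref{e6.3} (namely $f_{\mu,a}(\rho_0)\ge f_{\mu,a_0}(\rho_0)=0$, using $a\le a_0$ and Lemma \ref{lem6.2} and Lemma \ref{lem6.1}), we get $E(u)\ge\rho_0\,f_{\mu,a}(\rho_0)\ge0$. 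This gives $m_a<0\le\inf_{\partial V_a}E$.

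\emph{Part (2).} Suppose $m_a$ is attained and let $u$ be any ground state of \eqref{e1.5}, i.e. $E(u)=c_a^g$. First, since $\tilde u$ (the minimizer of $E$ on $V_a$) is an interior point of $V_a$ — which follows from Part (1), because $E(\tilde u)=m_a<0\le\inf_{\partial V_a}E$ forces $\tilde u\notin\partial V_a$ — it is a genuine constrained critical point, so $c_a^g\le E(\tilde u)=m_a$. Conversely, since $u$ solves \eqref{e1.5}, Lemma \ref{lem3.8} gives $P(u)=0$, hence $u\in\mathcal P_a$; by Lemma \ref{lem6.6}, the unique $\tau>0$ with $u_\tau\in\mathcal P_a$ and $E(u_\tau)<0$ (resp. the structure $\mathcal P_a=\mathcal P_{a,+}\cup\mathcal P_{a,-}$) shows that $u$ lies either in $\mathcal P_{a,+}$ (where $E>0$... let me restate): recall $\mathcal P_{a,+}=\{E<0\}$, $\mathcal P_{a,-}=\{E>0\}$, and $u_{\tau_u^+}\in\mathcal P_{a,+}\cap V_a$ while $u_{\tau_u^-}\in\mathcal P_{a,-}$ with $E(u_{\tau_u^-})\ge0$. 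Since $P(u)=0$, $u\in\{u_{\tau_u^+},u_{\tau_u^-}\}$ with $\tau=1$; if $u=u_{\tau_u^-}$ then $E(u)\ge0>m_a\ge c_a^g$, contradicting $E(u)=c_a^g$; hence $u=u_{\tau_u^+}\in V_a$, and in fact $E(u)=E(u_{\tau_u^+})<0$ so $c_a^g<0$, forcing $c_a^g=m_a$ and $u\in V_a$.

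\emph{Main obstacle.} The delicate point is the clean dichotomy in Part (2): ruling out that a ground state could sit on $\mathcal P_{a,-}$ or on $\partial V_a$. This rests entirely on the sign information $m_a<0\le\inf_{\partial V_a}E$ from Part (1) combined with the two-critical-point structure of $\Psi_u$ from Lemma \ref{lem6.6}; once those are in hand the argument is essentially bookkeeping. I would carry out the steps in the order: (i) $m_a<0$ via the fiber map near $0$; (ii) $\inf_{\partial V_a}E\ge0$ via \eqref{e3.11} and Lemmas \ref{lem6.1}–\ref{lem6.2}; (iii) interior-point remark for the minimizer, giving $c_a^g\le m_a$; (iv) for an arbitrary ground state, apply $P(u)=0$ and Lemma \ref{lem6.6} to locate it, excluding $\mathcal P_{a,-}$ by the level comparison, concluding $u\in V_a$ and $c_a^g=m_a$.
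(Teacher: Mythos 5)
Your proposal is correct and follows essentially the same route as the paper: the fiber map $\Psi_u(\tau)\to 0^-$ as $\tau\to 0^+$ gives $m_a<0$, the sign of $f_{\mu,a}(\rho_0)$ (equivalently Lemma \ref{lem6.6}) gives $\inf_{\partial V_a}E\ge 0$, and for part (2) the paper likewise notes that the interior minimizer is a critical point, that any ground state $v$ satisfies $E(v)\le m_a<0$ and $P(v)=0$, and then invokes Lemma \ref{lem6.6} to conclude $\tau_v^+=1$, hence $v\in V_a$. Your extra bookkeeping (explicitly excluding $\mathcal{P}_{a,-}$ by the level comparison) is just an unpacking of that same step.
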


\begin{proof}
(1) In view of Lemma \ref{lem6.6}, we just need to prove $\inf_{u\in
V_a}E(u)<0$. For any fixed $u\in S_a$, let $u_\tau(x)$ and
$\Psi_u(\tau)$ be defined in (\ref{e6.5}) and (\ref{e6.6}),
respectively. It is obvious that $\|\nabla u_\tau\|_2^2\to 0$ and
$E(u_\tau)=\Psi_u(\tau)\to 0^-$ as $\tau\to 0^+$. Hence, we can
choose $\tau_0>0$ sufficiently small such that $u_{\tau_0}\in V_a$
and $E(u_{\tau_0})<0$.

(2) Let $u\in V_a$ be such that $E(u)=m_a$. By (1), $u$ is a
solution to (\ref{e1.5}). Let $v$ be any ground state to
(\ref{e1.5}). Then $E(v)\leq E(u)= m_a<0$, and by Lemma
\ref{lem3.8}, $P(v)=0$. Consequently, by Lemma \ref{lem6.6},
$\tau_v^+=1$ and $v=v_{\tau_v^+}\in V_a$.
\end{proof}

\begin{lemma}\label{lem6.4}
Let $N\geq 3$, $\alpha\in (0,N)$, $p=\bar{p}$, $q\in
(2,2+\frac{4}{N})$ and $\mu>0$. Then

(1) $a\in (0,a_0]\mapsto m_a$ is a continuous mapping.

(2) Let $a\in (0,a_0]$. We have for every $a_1\in (0, a) : m_a\leq
m_{a_1}+m_{a-a_1}$, and if $m_{a_1}$ or $m_{a-a_1}$ is reached then
the inequality is strict.
\end{lemma}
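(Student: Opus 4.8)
\textbf{Proof plan for Lemma \ref{lem6.4}.}

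The plan is to establish the two assertions separately, following the pattern familiar from the subcritical normalized-solutions literature (e.g. the arguments of Jeanjean--Le and Soave). For part (1), the key observation is that $m_a$ admits a scaling-type comparison. Given $a\in(0,a_0]$ and a sequence $a_n\to a$, for each $u\in V_a$ with $E(u)$ close to $m_a$ one sets $v_n:=\sqrt{a_n/a}\,u\in S_{a_n}$. Since $\|\nabla v_n\|_2^2=(a_n/a)\|\nabla u\|_2^2$ and $a_n/a\to 1$, for $n$ large one has $\|\nabla v_n\|_2^2<\rho_0$, so $v_n\in V_{a_n}$; moreover $E(v_n)\to E(u)$ as $n\to\infty$ because each term in $E$ depends continuously on the scaling factor. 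This gives $\limsup_n m_{a_n}\le E(u)$, and optimizing over $u$ yields $\limsup_n m_{a_n}\le m_a$. The reverse inequality $\liminf_n m_{a_n}\ge m_a$ is obtained symmetrically: take near-optimizers $u_n\in V_{a_n}$, rescale to $\tilde u_n:=\sqrt{a/a_n}\,u_n\in S_a$, note $\tilde u_n\in V_a$ for $n$ large (again because $a/a_n\to1$ and $\|\nabla u_n\|_2^2<\rho_0$), and use $E(\tilde u_n)=E(u_n)+o(1)$ together with $E(u_n)\le m_{a_n}+o(1)$. One subtlety is that near-optimizers might have $\|\nabla u_n\|_2^2$ approaching $\rho_0$; to handle this cleanly I would instead compare $m_a$ with $m_{a'}$ for $a'$ slightly below $a$ using the strict monotonicity-type inequality $m_{ta}\le t\,m_a$-style estimates (available since $E(u_\tau)<0$ attainable deep in $V_a$), and combine both one-sided bounds; since $m_a<0=\inf_{\partial V_a}E$ by Lemma \ref{lem6.3}, the near-optimizers stay in the interior of $V_a$ in the $\|\nabla\cdot\|_2$ sense once we restrict to a neighborhood where $f_{\mu,a}>0$, so the rescalings remain admissible.

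For part (2), the strategy is the classical sub-additivity argument. Fix $a_1\in(0,a)$ and write $a_2:=a-a_1$. Given $\epsilon>0$, pick $u_1\in V_{a_1}$ and $u_2\in V_{a_2}$ with $E(u_i)\le m_{a_i}+\epsilon$; by density and truncation we may assume $u_1,u_2$ are compactly supported. Translating $u_2$ far away, set $w:=u_1+u_2(\cdot-y)$ with $|y|$ large so the supports are disjoint; then $\|w\|_2^2=a_1+a_2=a$, $\|\nabla w\|_2^2=\|\nabla u_1\|_2^2+\|\nabla u_2\|_2^2$, the local term $\|w\|_q^q=\|u_1\|_q^q+\|u_2\|_q^q$ splits exactly, and — this is the point needing care — the nonlocal Choquard term $\int(I_\alpha\ast|w|^{\bar p})|w|^{\bar p}$ equals $\int(I_\alpha\ast|u_1|^{\bar p})|u_1|^{\bar p}+\int(I_\alpha\ast|u_2|^{\bar p})|u_2|^{\bar p}+2\int(I_\alpha\ast|u_1|^{\bar p})|u_2(\cdot-y)|^{\bar p}$, and the cross term is nonnegative and tends to $0$ as $|y|\to\infty$ (by Hardy--Littlewood--Sobolev and the decay of $I_\alpha\ast|u_1|^{\bar p}$ away from $\mathrm{supp}\,u_1$). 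Hence $E(w)=E(u_1)+E(u_2)-\text{(nonnegative nonlocal cross term)}\le E(u_1)+E(u_2)\le m_{a_1}+m_{a_2}+2\epsilon$, provided we first check $w\in V_a$: since $\|\nabla w\|_2^2=\|\nabla u_1\|_2^2+\|\nabla u_2\|_2^2$ could a priori exceed $\rho_0$, one should instead place $u_1,u_2$ deep in the respective wells (using that $m_{a_i}<0$ is approached along functions with small $\|\nabla u_i\|_2^2$, by the fiber-map analysis of Lemma \ref{lem6.6}: $E(u_{\tau_u^+})<0$ with $u_{\tau_u^+}\in V_{a_i}$ and $\|\nabla u_{\tau_u^+}\|_2^2$ can be taken small). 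With $\|\nabla u_1\|_2^2,\|\nabla u_2\|_2^2$ both small, $w\in V_a$, and letting $\epsilon\to0$ gives $m_a\le m_{a_1}+m_{a_2}$. For the strict inequality when, say, $m_{a_1}$ is attained by some $\bar u_1\in V_{a_1}$: then $\bar u_1$ solves \eqref{e1.5}, so by the maximum principle / Choquard regularity it has full support and strictly positive nonlocal self-interaction, whence the cross term $2\int(I_\alpha\ast|\bar u_1|^{\bar p})|u_2(\cdot-y)|^{\bar p}>0$ is \emph{strictly} positive for every finite $y$, giving $E(w)<E(\bar u_1)+E(u_2)$ and hence $m_a<m_{a_1}+m_{a_2}$ after letting $\epsilon\to0$; one must be slightly careful that the strict gap does not close in the limit, which is handled by fixing $y$ once and for all before sending $\epsilon\to0$ (the loss from the cross term is a fixed positive quantity depending on $y$ and $\bar u_1$).

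The main obstacle I anticipate is the interplay between admissibility in the constraint set $V_a$ (the gradient bound $\|\nabla\cdot\|_2^2<\rho_0$) and the gluing/rescaling operations: naive rescalings or translates need not stay in $V_a$. The resolution in both parts is the same and relies crucially on Lemma \ref{lem6.6}: because the negative infimum $m_a$ is approached by functions of the form $u_{\tau_u^+}$ that sit arbitrarily close to the bottom of the well (small kinetic energy), all the surgery can be performed within the region where $f_{\mu,a}>0$, so the perturbed functions remain admissible. A secondary technical point is justifying the vanishing (and, for the strict case, the strict positivity) of the nonlocal cross term; this is routine via Lemma \ref{lem HLS} and the pointwise decay of $I_\alpha\ast|u_1|^{\bar p}$, but it is the one place where the Choquard structure genuinely differs from the local Schr\"odinger case treated in \cite{JEANJEAN-JENDREJ,Jeanjean-Le}.
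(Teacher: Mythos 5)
The paper gives no written proof of this lemma; it defers to Lemma 2.6 of \cite{JEANJEAN-JENDREJ}, whose argument proves (1) by the dilation $u\mapsto\sqrt{a_n/a}\,u$ and proves (2) by first establishing $m_{\theta b}\leq\theta m_b$ for $\theta>1$ (strict when $m_b$ is reached) via $v(x):=u(\theta^{-1/N}x)$, and then writing $m_a=\frac{a_1}{a}m_a+\frac{a-a_1}{a}m_a\leq m_{a_1}+m_{a-a_1}$. Your part (1) follows essentially this route and is acceptable, but the one estimate that makes every rescaling admissible should be made explicit: if $u\in V_b$ with $E(u)<0$, then \eqref{e3.11} forces $f_{\mu,b}(\|\nabla u\|_2^2)<0$, while Lemma \ref{lem6.2} (applied with $a_1=a_0$, $\rho_1=\rho_0$, using $f_{\mu,a_0}(\rho_0)=0$) gives $f_{\mu,b}>0$ on $\left[\frac{b}{a_0}\rho_0,\rho_0\right]$; hence $\|\nabla u\|_2^2<\frac{b}{a_0}\rho_0$, a bound proportional to the mass. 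It is this bound, not any smallness of the kinetic energy, that keeps the rescaled or glued functions inside $B_{\rho_0}$.

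Your part (2) takes a genuinely different route (gluing far-separated bumps) and has two gaps. First, your admissibility argument for $w=u_1+u_2(\cdot-y)$ rests on the claim that $m_{a_i}$ is approached by functions with arbitrarily small kinetic energy. This is false: by \eqref{e3.11} all three terms of the lower bound vanish as $\|\nabla u\|_2\to 0$, so $E(u)\to 0$ there, whereas $m_{a_i}<0$; near-minimizers therefore have $\|\nabla u\|_2$ bounded \emph{away} from zero, and $u_{\tau_u^+}$ in Lemma \ref{lem6.6} is a fixed critical point of the fiber map whose energy need not be near $m_{a_i}$. The gluing can still be saved, but only by the bound $\|\nabla u_i\|_2^2<\frac{a_i}{a_0}\rho_0$ above, which is additive and yields $\|\nabla w\|_2^2<\frac{a}{a_0}\rho_0\leq\rho_0$. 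Second, and more seriously, your strictness argument does not close: the cross term $\int_{\mathbb{R}^N}(I_\alpha\ast|\bar u_1|^{\bar{p}})|u_2^{\epsilon}(\cdot-y)|^{\bar{p}}$ depends on the $\epsilon$-near-minimizer $u_2^{\epsilon}$ and admits no uniform positive lower bound as $\epsilon\to 0$, so the gap may vanish in the limit; moreover $\bar u_1$ has full support, so the exact splittings of $\|w\|_2^2$ and of the nonlocal term that your computation uses fail, and the truncation errors must be beaten. The clean route is the scaling one: if $m_{a_1}$ is reached by $\bar u_1$ (necessarily $\|\nabla\bar u_1\|_2>0$), then for $\theta:=a/a_1>1$ the function $v(x)=\bar u_1(\theta^{-1/N}x)$ lies in $V_{\theta a_1}$ by the gradient bound above, and since $\|\nabla v\|_2^2=\theta^{1-2/N}\|\nabla\bar u_1\|_2^2<\theta\|\nabla\bar u_1\|_2^2$ while $\|v\|_q^q=\theta\|\bar u_1\|_q^q$ and the Choquard term scales by $\theta^{(N+\alpha)/N}\geq\theta$, one gets $m_a\leq E(v)<\theta m_{a_1}$; combined with $m_a\leq\frac{a}{a-a_1}m_{a-a_1}$ this gives $m_a<m_{a_1}+m_{a-a_1}$.
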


\begin{proof}
The proof can be done by modifying the proof of
(\cite{JEANJEAN-JENDREJ}, Lemma 2.6) in a trivial way. So we omit
it.
\end{proof}

The following result will both imply the existence of a ground state
to (\ref{e1.5}) and will be a crucial step to derive the orbital
stability of the set $\mathcal{G}_a$.

\begin{proposition}\label{pro6.1}
Let $N\geq 3$, $\alpha\in (0,N)$, $p=\bar{p}$, $q\in
(2,2+\frac{4}{N})$, $\mu>0$ and $a\in (0, a_0]$. If $\{u_n\}\subset
B_{\rho_0}$ is such that $\|u_n\|_2^2\to a$ and $E(u_n)\to m_a$
then,  up to translation, $u_n$ converges to $u\in V_a$ strongly in
$H^1(\mathbb{R}^N)$.
\end{proposition}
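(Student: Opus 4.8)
The plan is to run a concentration-compactness argument on the minimizing sequence $\{u_n\}\subset B_{\rho_0}$ with $\|u_n\|_2^2\to a$ and $E(u_n)\to m_a<0$. First I would observe that $\{u_n\}$ is bounded in $H^1(\mathbb{R}^N)$: since $u_n\in B_{\rho_0}$ the gradient norms are bounded by definition, and the $L^2$-norms converge to $a$. By replacing $u_n$ with $\sqrt{a/\|u_n\|_2^2}\,u_n$ (a harmless normalization, since the scaling factor tends to $1$ and this perturbs $\|\nabla u_n\|_2^2$ and $E(u_n)$ by $o(1)$, keeping us inside $B_{\rho_0}$ for $n$ large), I may as well assume $u_n\in S_a$, so $\{u_n\}\subset V_a$ and $E(u_n)\to m_a$.

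Next I would apply the Lions concentration-compactness lemma to the sequence of densities $|u_n|^2$. Vanishing is excluded: if $\sup_{y}\int_{B_1(y)}|u_n|^2\to 0$, then by the vanishing lemma $u_n\to 0$ in $L^q(\mathbb{R}^N)$ for $q\in(2,2^*)$, and the nonlocal term $\int(I_\alpha\ast|u_n|^{\bar p})|u_n|^{\bar p}$ can be controlled via Lemma \ref{lem HLS} and interpolation together with the bound $\|\nabla u_n\|_2^2<\rho_0$; one then gets $\liminf E(u_n)\ge 0$, contradicting $m_a<0$ (using $m_a<0$ from Lemma \ref{lem6.3}). Dichotomy is excluded by the strict subadditivity inequality $m_a<m_{a_1}+m_{a-a_1}$ for $a_1\in(0,a)$ from Lemma \ref{lem6.4}(2): a splitting into two pieces of masses $a_1$ and $a-a_1$ carrying energies $\ge m_{a_1}$ and $\ge m_{a-a_1}$ respectively (here one needs that each piece, after truncation, still essentially lies in the relevant ball so that the energy estimates from $f_{\mu,\cdot}$ apply, and that the cross terms in the nonlocal part vanish as the pieces separate) would give $m_a\ge m_{a_1}+m_{a-a_1}$, a contradiction. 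Hence compactness holds: there exist $\{y_n\}\subset\mathbb{R}^N$ such that $u_n(\cdot+y_n)$ is tight, i.e.\ for every $\varepsilon>0$ there is $R>0$ with $\int_{|x|>R}|u_n(\cdot+y_n)|^2<\varepsilon$ for all $n$.

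Then, writing $v_n:=u_n(\cdot+y_n)$, up to a subsequence $v_n\rightharpoonup v$ weakly in $H^1$, and tightness upgrades this to $v_n\to v$ strongly in $L^2(\mathbb{R}^N)$, hence (by boundedness and interpolation) strongly in $L^q(\mathbb{R}^N)$; in particular $\|v\|_2^2=a$, so $v\in S_a$. For the nonlocal term, Lemma \ref{lem33.3} gives $(I_\alpha\ast|v_n|^{\bar p})|v_n|^{\bar p-2}v_n\rightharpoonup (I_\alpha\ast|v|^{\bar p})|v|^{\bar p-2}v$ weakly in $H^{-1}$, and combined with the strong $L^2$-convergence one deduces $\int(I_\alpha\ast|v_n|^{\bar p})|v_n|^{\bar p}\to\int(I_\alpha\ast|v|^{\bar p})|v|^{\bar p}$; together with weak lower semicontinuity of $\|\nabla\cdot\|_2^2$ this yields $E(v)\le\liminf E(v_n)=m_a$. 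Since $\|\nabla v\|_2^2\le\liminf\|\nabla v_n\|_2^2\le\rho_0$, a priori $v\in\overline{V_a}$; but $E(v)\le m_a<0\le\inf_{\partial V_a}E$ (Lemma \ref{lem6.3}(1)) forces $v\in V_a$, hence $E(v)\ge m_a$, so $E(v)=m_a$ and $\|\nabla v_n\|_2\to\|\nabla v\|_2$. Combined with $v_n\to v$ in $L^2$ and $L^q$, this gives $v_n\to v$ strongly in $H^1(\mathbb{R}^N)$, which is the claim with $u=v$.

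The main obstacle I anticipate is ruling out dichotomy cleanly: one must localize $u_n$ into two well-separated bumps using smooth cutoffs, check that the truncated pieces still satisfy the ball constraint (or nearly so) so that Lemma \ref{lem6.3} and the functions $f_{\mu,a}$ apply to each piece, and show the nonlocal interaction energy between the two separated pieces tends to zero — this last estimate needs a careful use of the Hardy--Littlewood--Sobolev inequality on the region where the supports are far apart. Once the strict subadditivity of $a\mapsto m_a$ is in hand (Lemma \ref{lem6.4}) and the decay $I_\alpha(x)\to 0$ as $|x|\to\infty$ is exploited, this goes through, but it is the most technical point.
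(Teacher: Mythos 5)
Your overall architecture (exclude vanishing via $m_a<0$, exclude dichotomy via subadditivity, then pass to the limit) matches the paper's in spirit, but there is a genuine gap at the decisive step: the claim that tightness in $L^2$ plus Lemma \ref{lem33.3} yields
$\int(I_\alpha\ast|v_n|^{\bar p})|v_n|^{\bar p}\to\int(I_\alpha\ast|v|^{\bar p})|v|^{\bar p}$
is false. Lemma \ref{lem33.3} only gives weak $H^{-1}$ convergence of the nonlinearity, and pairing a weakly convergent sequence in $H^{-1}$ against a merely weakly convergent sequence in $H^1$ does not pass to the limit; strong $L^2$ (hence $L^t$, $t<2^*$) convergence does not help because by Hardy--Littlewood--Sobolev the term is controlled exactly by $\|u\|_{2^*}^{2\bar p}$, i.e.\ it is critical. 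A concentrating Aubin--Talenti-type bubble added to $v$ is tight in $L^2$ yet destroys convergence of this term. Worse, the inequality you need for $E(v)\le\liminf E(v_n)$ is $\limsup_n\int(I_\alpha\ast|v_n|^{\bar p})|v_n|^{\bar p}\le\int(I_\alpha\ast|v|^{\bar p})|v|^{\bar p}$, whereas the Brezis--Lieb decomposition gives the \emph{opposite} inequality $\liminf_n\int(I_\alpha\ast|v_n|^{\bar p})|v_n|^{\bar p}\ge\int(I_\alpha\ast|v|^{\bar p})|v|^{\bar p}$, since the term enters $E$ with a negative sign. So lower semicontinuity of $E$ along the weak limit is precisely what fails in the critical setting and cannot be asserted. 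The paper avoids this by splitting $E(\tilde u_n)=E(u)+E(v_n)+o_n(1)$ with $v_n=\tilde u_n-u$ and then using the quantitative coercivity on the ball, namely $\tfrac12-\tfrac{1}{2\bar p}S_\alpha^{-\bar p}\rho_0^{\bar p-1}>0$ (a consequence of $f_{\mu,a_0}(\rho_0)=0$), to show that any nontrivial remainder with $\|v_n\|_2\to0$ would carry nonnegative energy, forcing $\|\nabla v_n\|_2\to0$; the possible bubble is excluded energetically, not by compactness of the nonlocal term.

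Two secondary points. First, your dichotomy exclusion as stated only produces $m_a\ge m_{a_1}+m_{a-a_1}$, which combined with Lemma \ref{lem6.4}(2) gives equality, not a contradiction; strictness in Lemma \ref{lem6.4}(2) requires that $m_{a_1}$ or $m_{a-a_1}$ be attained, so you must additionally show the weak limit is a minimizer at its own mass (this is exactly what the Brezis--Lieb route of the paper, following Theorem 2.5 of \cite{JEANJEAN-JENDREJ}, accomplishes). Second, the cutoff-based Lions trichotomy is workable but heavier here than the splitting around the weak limit, since the separated-bump estimates for the nonlocal interaction are not needed in the paper's version.
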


\begin{proof}
Since $\{u_n\}\subset B_{\rho_0}$ and $\|u_n\|_2^2\to a$, we obtain
that $\{u_n\}$ is bounded in $H^1(\mathbb{R}^N)$. We  claim that
\begin{align}\label{e6.7}
\liminf_{n\to \infty}\sup_{y\in
\mathbb{R}^N}\int_{B_1(y)}|u_n(x)|^2dx>0.
\end{align}
If it is false,  $\|u_n\|_q\to 0$ as $n\to \infty$  by Lions'
vanishing lemma, see (\cite{Willem 1996}, Lemma 1.21). By using
(\ref{e3.14}) and $\{u_n\}\subset B_{\rho_0}$, we obtain that
\begin{equation*}
\begin{split}
E(u_n)&=\frac{1}{2}\|\nabla
u_n\|_2^2-\frac{1}{2\bar{p}}\int_{\mathbb{R}^N}(I_\alpha\ast|u_n|^{\bar{p}})|u_n|^{\bar{p}}+o_n(1)\\
&\geq \frac{1}{2}\|\nabla
u_n\|_2^2-\frac{1}{2\bar{p}}S_\alpha^{-\bar{p}}\|\nabla
u_n\|_2^{2\bar{p}}+o_n(1)\\
&=\|\nabla
u_n\|_2^2\left(\frac{1}{2}-\frac{1}{2\bar{p}}S_\alpha^{-\bar{p}}\|\nabla
u_n\|_2^{2\bar{p}-2}\right)+o_n(1)\\
&\geq \|\nabla
u_n\|_2^2\left(\frac{1}{2}-\frac{1}{2\bar{p}}S_\alpha^{-\bar{p}}\rho_0^{\bar{p}-1}\right)+o_n(1).
\end{split}
\end{equation*}
Since $f_{\mu,a_0}(\rho_0)=0$, we have that
\begin{equation*}
\frac{1}{2}-\frac{1}{2\bar{p}}S_\alpha^{-\bar{p}}\rho_0^{\bar{p}-1}=
\frac{\mu}{q}C_{N,q}^qa_0^{\frac{q(1-\gamma_q)}{2}}\rho_0^{\frac{q\gamma_q-2}{2}}>0.
\end{equation*}
Consequently, $E(u_n)\geq o_n(1)$, which contradicts $E(u_n)\to
m_a<0$.

So (\ref{e6.7}) holds. Going if necessary to a subsequence, there
exists a sequence $\{y_n\}\subset \mathbb{R}^N$ such that,
$\tilde{u}_n(x):=u_n(x-y_n)\rightharpoonup u\in
H^1(\mathbb{R}^N)\setminus\{0\}$
 weakly in $H^1(\mathbb{R}^N)$. Set $v_n=\tilde{u}_n-u$. Then by the weak
 convergence and the Brezis-Lieb lemma, we know
\begin{equation*}
\|u_n\|_2^2=\|\tilde{u}_n\|_2^2=\|u\|_2^2+\|v_n\|_2^2+o_n(1),
\end{equation*}
\begin{equation*}
\|\nabla u_n\|_2^2=\|\nabla \tilde{u}_n\|_2^2=\|\nabla
u\|_2^2+\|\nabla v_n\|_2^2+o_n(1),
\end{equation*}
\begin{equation*}
\|u_n\|_q^q=\|\tilde{u}_n\|_q^q=\|u\|_q^q+\|v_n\|_q^q+o_n(1)
\end{equation*}
and
\begin{equation*}
\begin{split}
\int_{\mathbb{R}^N}(I_\alpha\ast
|u_n|^{\bar{p}})|u_n|^{\bar{p}}&=\int_{\mathbb{R}^N}(I_\alpha\ast
|\tilde{u}_n|^{\bar{p}})|\tilde{u}_n|^{\bar{p}}\\
&=\int_{\mathbb{R}^N}(I_\alpha\ast
|u|^{\bar{p}})|u|^{\bar{p}}+\int_{\mathbb{R}^N}(I_\alpha\ast
|v_n|^{\bar{p}})|v_n|^{\bar{p}}+o_n(1).
\end{split}
\end{equation*}
Consequently,
\begin{equation*}
E(u_n)=E(\tilde{u}_n)=E(u)+E(v_n)+o_n(1).
\end{equation*}

Next, by repeating word by word the proof of Theorem 2.5 in
\cite{JEANJEAN-JENDREJ},  we can show that $\|v_n\|_2^2\to 0$ and
$\|\nabla v_n\|_2^2\to 0$ as $n\to \infty$. Thus, $\tilde{u}_n\to
u\in V_a$ strongly in $H^1(\mathbb{R}^N)$.
\end{proof}

\textbf{Proof of Theorem \ref{thm6.1}}. (1), (2) and (3) follow from
Proposition \ref{pro6.1} and  Lemma \ref{lem6.3}. To prove (4), we
let $|\tilde{u}|^*$ denote the Schwartz rearrangement of
$|\tilde{u}|$. Then
\begin{equation*}
\||\tilde{u}|^*\|_2^2=\|\tilde{u}\|_2^2=a,\
\|\nabla|\tilde{u}|^*\|_2^2\leq \|\nabla|\tilde{u}|\|_2^2\leq
\|\nabla\tilde{u}\|_2^2<\rho_0,\
\||\tilde{u}|^*\|_q^q=\|\tilde{u}\|_q^q,
\end{equation*}
\begin{equation*}
\int_{\mathbb{R}^N}(I_\alpha\ast
||\tilde{u}|^*|^{\bar{p}})||\tilde{u}|^*|^{\bar{p}}\geq
\int_{\mathbb{R}^N}(I_\alpha\ast
|\tilde{u}|^{\bar{p}})|\tilde{u}|^{\bar{p}}.
\end{equation*}
These imply that $|\tilde{u}|^*\in V_a$ and $E(|\tilde{u}|^*)\leq
E(\tilde{u})=m_a$. By the definition of $m_a$, we know $m_a$ is
attained by the positive and radially symmetric non-increasing
function $|\tilde{u}|^*$. Lastly we prove (5). By using the equation
(\ref{e1.5}),  $P(u)=0$, $0<\gamma_q<1$ and $\mu>0$, we obtain
\begin{equation}\label{e***}
\lambda a=\|\nabla u\|_2^2-
\int_{\mathbb{R}^N}(I_\alpha\ast|u|^{\bar{p}})|u|^{\bar{p}}-\mu
\|u\|_q^{q}=\mu (\gamma_q-1) \|u\|_q^{q}<0,
\end{equation}
which implies $\lambda<0$. The proof is complete.

\section{Existence of mountain pass type  normalized standing waves}

\setcounter{section}{4} \setcounter{equation}{0}

In this section, we prove Theorem \ref{thm6.3}. Firstly, we  use the
mountain pass lemma to obtain a special Palais-Smale sequence. Now
we set
\begin{equation*}
M_{r}(a):=\inf_{g\in \Gamma_{r}(a)}\max_{t\in [0,\infty)}E(g(t)),
\end{equation*}
where
\begin{equation*}
\Gamma_{r}(a):=\{g\in C([0,\infty),S_{a,r}):g(0)\in
\mathcal{P}_{a,+}, \exists t_{g}\ s.t.\ g(t)\in E_{2m_{a}}\
\mathrm{for}\ t\geq t_{g}\}
\end{equation*}
with $E_{c}:=\{u\in H^1(\mathbb{R}^N): E(u)<c\}$. Then we have

\begin{lemma}\label{pro1.2}
Let $N\geq 3$, $\alpha\in (0,N)$, $p=\bar{p}$, $q\in
(2,2+\frac{4}{N})$, $\mu>0$, $a>0$, $\mu
a^{\frac{q(1-\gamma_q)}{2}}\leq
(2K)^{\frac{q\gamma_q-2\bar{p}}{2(\bar{p}-1)}}$. Then there exists a
Palais-Smale sequence $\{u_n\}\subset S_{a,r}$ for $E|_{S_a}$ at
level $M_{r}(a)$, with $P(u_n)\to 0$ as $n\to \infty$.
\end{lemma}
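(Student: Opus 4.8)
\emph{Strategy.} The plan is to realize $M_r(a)$ as a mountain--pass level for $E$ restricted to the radial sphere $S_{a,r}$ and to recover the Pohozaev information $P(u_n)\to0$ from the scaling direction, via the device of Jeanjean \cite{Jeanjean 1997} (also used in \cite{Jeanjean-Le,JEANJEAN-JENDREJ}).

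\emph{Step 1: mountain--pass geometry.} First I would check that $\Gamma_r(a)$ is nonempty, that $M_r(a)$ is finite, and that the maximum of every admissible path lies strictly above its two endpoints. Let $u_+\in\mathcal G_a$ be the radially symmetric non--increasing ground state from Theorem \ref{thm6.1}; since $E(u_+)=m_a<0$ and $P(u_+)=0$ it belongs to $\mathcal P_{a,+}$, and $\tau_{u_+}^+=1$ by Lemma \ref{lem6.6}(1). Consider $g(t):=(u_+)_{1+t}$, with $(u_+)_\tau$ as in \eqref{e6.5}: then $g(0)=u_+\in\mathcal P_{a,+}$, $g$ is continuous into $S_{a,r}$, $E(g(t))=\Psi_{u_+}(1+t)\to-\infty$ as $t\to\infty$ (the term $-\tfrac1{2\bar p}\tau^{2\bar p}\int(I_\alpha\ast|u_+|^{\bar p})|u_+|^{\bar p}$ dominates in \eqref{e6.6}, since $2\bar p>2>q\gamma_q$), so $g(t)\in E_{2m_a}$ for $t$ large, while $\max_{t\ge0}E(g(t))=\Psi_{u_+}(\tau_{u_+}^-)<\infty$; hence $g\in\Gamma_r(a)$ and $M_r(a)<\infty$. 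For the lower separation, let $g\in\Gamma_r(a)$: since $g(0)\in\mathcal P_{a,+}\subset V_a$ we have $\|\nabla g(0)\|_2^2<\rho_0$, whereas for $t\ge t_g$ the inequality $E(g(t))<2m_a<m_a=\inf_{V_a}E$ forces $g(t)\notin V_a$, i.e. $\|\nabla g(t)\|_2^2\ge\rho_0$; by continuity $g$ meets $\partial V_a$, where $E\ge\inf_{\partial V_a}E\ge0$ by Lemma \ref{lem6.3}. Thus $\max_tE(g(t))\ge0$, which is strictly larger than the negative values attained at $t=0$ and at $t\ge t_g$, and $M_r(a)\ge0>2m_a$: the geometry is a genuine (strict) mountain pass.

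\emph{Step 2: augmented functional and conclusion.} For $(u,s)\in H_r^1(\mathbb R^N)\times\mathbb R$ put $s\star u(x):=e^{Ns/2}u(e^sx)$, which preserves the $L^2$--norm, and set $\tilde E(u,s):=E(s\star u)$, a $C^1$ functional on $S_{a,r}\times\mathbb R$; comparing with \eqref{e6.6} (take $\tau=e^s$) gives the key identity
\begin{equation*}
\partial_s\tilde E(u,s)=P(s\star u).
\end{equation*}
Let $\tilde\Gamma_r(a)$ be the class of paths $(g,\sigma)\in C([0,\infty),S_{a,r}\times\mathbb R)$ with $\sigma(0)\star g(0)\in\mathcal P_{a,+}$ and $\sigma(t)\star g(t)\in E_{2m_a}$ for $t$ large, and $\tilde M_r(a):=\inf_{\tilde\Gamma_r(a)}\max_t\tilde E$. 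The bijection $(g,\sigma)\mapsto\bigl(t\mapsto\sigma(t)\star g(t)\bigr)$ from $\tilde\Gamma_r(a)$ onto $\Gamma_r(a)$ preserves the value of $\max_t$, so $\tilde M_r(a)=M_r(a)$. I would then apply the Ekeland/deformation minimax principle of Jeanjean \cite{Jeanjean 1997} (cf. \cite{Jeanjean-Le,JEANJEAN-JENDREJ}) to $\tilde E$ on the Hilbert manifold $S_{a,r}\times\mathbb R$; thanks to the strict geometry of Step 1 this yields, at level $\tilde M_r(a)=M_r(a)$, a sequence $(w_n,s_n)$ with $\tilde E(w_n,s_n)\to M_r(a)$, $\{s_n\}$ bounded, $\partial_s\tilde E(w_n,s_n)\to0$ and $\|\partial_u\tilde E(w_n,s_n)\|_{(T_{w_n}S_{a,r})^*}\to0$. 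Setting $u_n:=s_n\star w_n\in S_{a,r}$, one gets $E(u_n)=\tilde E(w_n,s_n)\to M_r(a)$, $P(u_n)=\partial_s\tilde E(w_n,s_n)\to0$ by the identity above, and, since $w\mapsto s_n\star w$ is an automorphism of $H^1$ whose norm is controlled by the bounded quantity $|s_n|$, the $u$--derivative bound transfers to $\|(E|_{S_{a,r}})'(u_n)\|\to0$; finally, $u_n$ being radial, $E'(u_n)$ is radial, so by the principle of symmetric criticality $\|(E|_{S_a})'(u_n)\|=\|(E|_{S_{a,r}})'(u_n)\|\to0$. This produces the asserted Palais--Smale sequence.

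\emph{Main obstacle.} The delicate point is the last step: extracting from the minimax principle a sequence whose scaling parameters $s_n$ remain bounded, so that undoing $\star$ does not destroy the gradient estimate and the derivative bound on the product manifold genuinely descends to $E|_{S_a}$. This is exactly what the Jeanjean--type minimax machinery is engineered to deliver; I would invoke it rather than reprove it, the only geometric input it requires being the strict mountain--pass separation of Step 1 together with the uniform behaviour of the fibers $\tau\mapsto(u_+)_\tau$ from Lemma \ref{lem6.6}, which confines the maxima of near--optimal paths to a neighbourhood of the Pohozaev set $\mathcal P_{a,-}$.
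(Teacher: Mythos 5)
Your proposal is correct and follows essentially the same route as the paper: both realize $M_r(a)$ as a strict mountain--pass level, pass to the augmented functional $\tilde E(u,s)=E(s\star u)$ (the paper's $\tilde E(\tau,u)=\Psi_u(\tau)$ with $\tau=e^s$) so that $\partial_s\tilde E=P(s\star u)$, identify $\tilde M_r(a)=M_r(a)$, and invoke the Jeanjean-type minimax machinery of \cite{Jeanjean 1997,Jeanjean-Le} to produce the Palais--Smale sequence with $P(u_n)\to 0$. The only difference is that you spell out the geometric verifications (nonemptiness of $\Gamma_r(a)$ via the path $(u_+)_{1+t}$ and the separation $M_r(a)\ge 0>2m_a$) which the paper delegates to Lemma 3.3 and Proposition 1.10 of \cite{Jeanjean-Le}.
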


\begin{proof}
We follow the strategy introduced in \cite{Jeanjean 1997} and
consider the functional $\tilde{E}:\mathbb{R}_+\times
H^1(\mathbb{R}^N)\to \mathbb{R}$ defined by
\begin{equation*}
\tilde{E}(\tau,u):=E(u_\tau)=\Psi_u(\tau).
\end{equation*}
Define
\begin{equation*}
\tilde{M}_r(a):=\inf_{\tilde{g}\in \tilde{\Gamma}_r(a)}\max_{t\in
[0,\infty)}\tilde{E}(\tilde{g}(t)),
\end{equation*}
where
\begin{equation*}
\begin{split}
\tilde{\Gamma}_{r}(a):=\{\tilde{g}\in
C([0,\infty),\mathbb{R}_+\times S_{a,r}):\tilde{g}(0)\in
(1,\mathcal{P}_{a,+}), \exists t_{\tilde{g}}\ s.t.\  \tilde{g}(t)\in
(1,E_{2m_{a}}), t\geq t_{\tilde{g}}\}.
\end{split}
\end{equation*}
Similarly to Lemma 3.3 in \cite{Jeanjean-Le}, we can show that
$\tilde{M}_r(a)=M_r(a)$, and
\begin{equation*}
\tilde{M}_r(a)=\inf_{\tilde{g}\in \tilde{\Gamma}_r(a)}\max_{t\in
[0,\infty)}\tilde{E}(\tilde{g}(t))\geq
0>\max\{\tilde{E}(\tilde{g}(0)),\tilde{E}(\tilde{g}(t_{\tilde{g}}))\}.
\end{equation*}
Then repeating word by word the proof of Proposition 1.10 in
\cite{Jeanjean-Le}, we can obtain a Palais-Smale sequence
$\{u_n\}\subset S_{a,r}$ for $E|_{S_a}$ at level $M_{r}(a)$, with
$P(u_n)\to 0$ as $n\to \infty$.
\end{proof}

Next we study the value of  $M_r(a)$. For this aim, we set
\begin{equation*}
M(a):=\inf_{g\in \Gamma(a)}\max_{t\in [0,\infty)}E(g(t)),
\end{equation*}
where
\begin{equation}\label{e6.8}
\Gamma(a):=\{g\in C([0,\infty),S_{a}):g(0)\in V_a\cap E_{0}, \exists
t_{g}\ s.t.\ g(t)\in E_{2m_{a}}, t\geq t_{g}\}.
\end{equation}

\begin{lemma}\label{pro1.3}
Let $N\geq 3$, $\alpha\in (0,N)$, $p=\bar{p}$, $q\in
(2,2+\frac{4}{N})$,  $\mu>0$, $a>0$, $\mu
a^{\frac{q(1-\gamma_q)}{2}}\leq
(2K)^{\frac{q\gamma_q-2\bar{p}}{2(\bar{p}-1)}}$. Then
\begin{equation*}
M_{r}(a)=M(a)=\inf_{\mathcal{P}_{a,-}}E(u)=\inf_{\mathcal{P}_{a,-}\cap
H_r^1(\mathbb{R}^N)}E(u).
\end{equation*}
\end{lemma}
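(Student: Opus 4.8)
The plan is to establish the chain of equalities
$M_r(a)=M(a)=\inf_{\mathcal{P}_{a,-}}E=\inf_{\mathcal{P}_{a,-}\cap H_r^1}E$
by proving a string of inequalities that close up into a cycle. First I would show $\inf_{\mathcal{P}_{a,-}\cap H_r^1}E\le M_r(a)$ and $\inf_{\mathcal{P}_{a,-}}E\le M(a)$: given any admissible path $g\in\Gamma_r(a)$ (resp. $\Gamma(a)$), I want to argue that $g$ must cross $\mathcal{P}_{a,-}$. The endpoint $g(0)$ lies in $\mathcal{P}_{a,+}$ (resp. $V_a\cap E_0$), so using Lemma \ref{lem6.6} one has $\tau^-_{g(0)}>1=\tau^+_{g(0)}$ (resp. $\tau^-_{g(0)}>\tau^+_{g(0)}\ge 1$), while at the other end $g(t_g)\in E_{2m_a}$ with $2m_a<0$ forces $\|\nabla g(t_g)\|_2^2$ to be large (since $E<0$ on small-gradient functions is bounded below by $m_a$, and more precisely $E_{2m_a}$ sits ``above'' $\mathcal P_{a,-}$ in the fiber picture), so $\tau^-_{g(t_g)}<1$. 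By continuity of $t\mapsto\tau^-_{g(t)}$ (part (3) of Lemma \ref{lem6.6}) there is $t_0$ with $\tau^-_{g(t_0)}=1$, i.e. $g(t_0)\in\mathcal{P}_a$ with $E(g(t_0))>0$ (a maximum of its fiber at a positive level), hence $g(t_0)\in\mathcal{P}_{a,-}$. Therefore $\max_t E(g(t))\ge E(g(t_0))\ge\inf_{\mathcal{P}_{a,-}\cap H_r^1}E$ (resp. $\inf_{\mathcal P_{a,-}}E$), and taking infimum over $g$ gives the two inequalities.

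Next I would prove the reverse directions. For $\inf_{\mathcal{P}_{a,-}\cap H_r^1}E\ge M_r(a)$: given $u\in\mathcal{P}_{a,-}\cap H_r^1$, build a path through $u$ by following the fiber $\tau\mapsto u_\tau$ for $\tau$ ranging over a large interval $[\tau_1,\tau_2]\ni 1$. Since $\tau^-_u=1$, $\Psi_u$ attains its global max at $\tau=1$ along this curve, so $\max_\tau E(u_\tau)=E(u)$; for $\tau_1$ small enough $u_{\tau_1}\in\mathcal{P}_{a,+}$ (it sits at $\tau^+$ after reparametrization, or one prepends the short subarc $[\tau^+_u,1]$... more cleanly: use $u_{\tau_1}$ with $\Psi_u(\tau_1)<0$ and $\|\nabla u_{\tau_1}\|_2^2<\rho_0$, which lands in $V_a\cap E_0$, and note $\mathcal{P}_{a,+}\subset V_a$, so after possibly adjusting I get an endpoint in $\mathcal{P}_{a,+}$), and for $\tau_2$ large $\Psi_u(\tau_2)<2m_a$ by part (2) of Lemma \ref{lem6.6}. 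After an affine reparametrization of $[\tau_1,\tau_2]$ to $[0,\infty)$ with the tail constant, this path lies in $\Gamma_r(a)$, whence $M_r(a)\le E(u)$; infimum over $u$ gives the claim. The same construction without the radiality restriction gives $M(a)\le\inf_{\mathcal{P}_{a,-}}E$.

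Finally, the trivial inclusions $\Gamma_r(a)\subset\Gamma(a)$ and $\mathcal{P}_{a,-}\cap H_r^1\subset\mathcal{P}_{a,-}$ give $M(a)\le M_r(a)$ and $\inf_{\mathcal{P}_{a,-}}E\le\inf_{\mathcal{P}_{a,-}\cap H_r^1}E$; combined with the four inequalities above, all quantities coincide. The main obstacle I anticipate is making the crossing argument in the first paragraph fully rigorous — specifically, verifying that an admissible path genuinely forces $\tau^-_{g(t_g)}<1$ (one needs that $E(g(t_g))<2m_a<0$ together with control of $\|\nabla g(t_g)\|_2$ rules out $g(t_g)$ being on the ``increasing'' side $\tau<\tau^-$ of its fiber) and that the endpoint condition $g(0)\in\mathcal{P}_{a,+}$ correctly gives $\tau^-_{g(0)}>1$; this is exactly where the precise structure of $\Psi_u$ from Lemma \ref{lem6.6} and the sign information $E<0$ on $\mathcal{P}_{a,+}$, $E>0$ on $\mathcal{P}_{a,-}$ must be used carefully, and is where the hypothesis $\mu a^{\frac{q(1-\gamma_q)}{2}}\le(2K)^{\frac{q\gamma_q-2\bar p}{2(\bar p-1)}}$ enters (ensuring $V_a$ is a genuine potential well with $m_a<0\le\inf_{\partial V_a}E$). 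The use of Schwartz rearrangement, which only lowers $E$ and preserves the constraint (as already exploited in the proof of Theorem \ref{thm6.1}), is what lets the radial and non-radial infima over $\mathcal{P}_{a,-}$ agree, closing the circle.
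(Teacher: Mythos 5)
Your architecture is the same as the paper's: a crossing argument showing that every admissible path meets $\mathcal{P}_{a,-}$ (so the mountain-pass levels dominate the Poho\v{z}aev infima), an explicit fiber path $t\mapsto u_{t+\tau_u^+}$ through a given $u\in\mathcal{P}_{a,-}\cap H_r^1$ for the reverse inequality, the trivial inclusions, and Schwartz rearrangement to relate the radial and non-radial infima. However, as written your chain of inequalities does not close. Writing $A=M_r(a)$, $B=M(a)$, $C=\inf_{\mathcal{P}_{a,-}}E$, $D=\inf_{\mathcal{P}_{a,-}\cap H_r^1}E$, your six inequalities give $A=D$, $B=C$ and $B\le A$ (equivalently $C\le D$), but nothing forces $A\le B$. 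The missing link is precisely $C\ge D$, i.e. $\inf_{\mathcal{P}_{a,-}}E\ge\inf_{\mathcal{P}_{a,-}\cap H_r^1}E$, and your stated justification for it --- that rearrangement ``only lowers $E$ and preserves the constraint'' --- does not work here: Schwartz rearrangement preserves $S_a$ but does \emph{not} preserve the Poho\v{z}aev constraint $P(u)=0$, so $|u|^*$ need not lie in $\mathcal{P}_{a,-}$ at all. (The analogy with Theorem \ref{thm6.1} is misleading, because there the relevant set $V_a$ is defined by $\|u\|_2^2=a$ and $\|\nabla u\|_2^2<\rho_0$, both of which rearrangement respects.)

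The paper's repair is the fiber-map argument: for $u\in\mathcal{P}_{a,-}$ one has $\Psi_{|u|^*}(\tau)\le\Psi_u(\tau)$ for all $\tau$ (rearrangement decreases the gradient term and increases the nonlocal term), $\tau_u^-=1$ is the \emph{global} maximum of $\Psi_u$, and $|u|^*_{\tau^-_{|u|^*}}\in\mathcal{P}_{a,-}\cap H_r^1(\mathbb{R}^N)$; hence
\begin{equation*}
E(u)=\Psi_u(1)\ge\Psi_u\bigl(\tau^-_{|u|^*}\bigr)\ge\Psi_{|u|^*}\bigl(\tau^-_{|u|^*}\bigr)=E\bigl(|u|^*_{\tau^-_{|u|^*}}\bigr)\ge\inf_{\mathcal{P}_{a,-}\cap H_r^1(\mathbb{R}^N)}E.
\end{equation*}
You need to add this rescaling step. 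A minor further slip: for $g(0)\in V_a\cap E_0$ your parenthetical ``$\tau^-_{g(0)}>\tau^+_{g(0)}\ge1$'' is not right ($\tau^+_{g(0)}$ can be on either side of $1$); the correct reason $\tau^-_{g(0)}>1$ is that $\Psi_{g(0)}(\tau^-)\ge\inf_{\partial V_a}E\ge0>E(g(0))=\Psi_{g(0)}(1)$ together with $\|\nabla g(0)\|_2^2<\rho_0$, which is the content of Lemma \ref{lem6.6}. With these two repairs your proof coincides with the paper's.
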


\begin{proof}
Obviously,  $M_{r}(a)\geq M(a)$.

For any $g(t)\in \Gamma(a)$, since $g(0)\in V_a$, $E(g(0))<0$ and
$E(g(t_g))< 2m_{a}<m_{a}$, by Lemma \ref{lem6.6}, we have
$\tau_{g(0)}^->1$ and $\tau_{g(t_g)}^-<1$. So by the continuity of
$g(t)$ and of $u\mapsto \tau_{u}^-$, we know that there exists $t_0$
such that $\tau_{g(t_0)}^-=1$, i.e., $g(t_0)\in \mathcal{P}_{a,-}$.
Thus
\begin{equation*}
M(a)=\inf_{g\in \Gamma(a)}\max_{t\in[0,\infty)}E(g(t))\geq
\inf_{g\in \Gamma(a)}E(g(t_0))\geq \inf_{u\in
\mathcal{P}_{a,-}}E(u).
\end{equation*}

For any $u\in \mathcal{P}_{a,-}$, let $|u|^*$ be the Schwartz
 rearrangement of $|u|$. Since $\||u|^*\|_t=\|u\|_t$
with $t\in[1,\infty)$, $\|\nabla (|u|^*)\|_2\leq \|\nabla u\|_2$ and
\begin{equation*}
\int_{\mathbb{R}^N}(I_\alpha\ast(|u|^*)^{\bar{p}})(|u|^*)^{\bar{p}}\geq
\int_{\mathbb{R}^N}(I_\alpha\ast|u|^{\bar{p}})|u|^{\bar{p}},
\end{equation*}
we obtain that $\Psi_{|u|^*}(\tau)\leq \Psi_u(\tau)$ for any
$\tau\in [0,\infty)$. Let $\tau_u^-$ be defined by Lemma
\ref{lem6.6}  such that $P(u_{\tau_u^-})=0$. Then
\begin{equation*}
E(u)=\Psi_u(1)=\Psi_u(\tau_u^-)\geq \Psi_u(\tau_{|u|^*}^-)\geq
\Psi_{|u|^*}(\tau_{|u|^*}^-).
\end{equation*}
Since $|u|^*_{\tau_{|u|^*}^-}\in \mathcal{P}_{a,-}\cap
H_r^1(\mathbb{R}^N)$, we have that
\begin{equation*}
E(u)\geq \inf_{\mathcal{P}_{a,-}\cap H_r^1(\mathbb{R}^N)}E(u).
\end{equation*}
By the arbitrariness of $u$, we obtain that
\begin{equation}\label{e3.23}
\inf_{\mathcal{P}_{a,-}}E(u)\geq \inf_{\mathcal{P}_{a,-}\cap
H_r^1(\mathbb{R}^N)}E(u).
\end{equation}

For any $u\in \mathcal{P}_{a,-}\cap H_r^1(\mathbb{R}^N)$, define
\begin{equation*}
g_u(t):=u_{t+\tau_u^+},
\end{equation*}
where $\tau_u^+$ is defined by Lemma \ref{lem6.6}. Then $g_u(t)\in
\Gamma_r(a)$ and
\begin{equation*}
E(u)=\max_{t\in[0,\infty)}E(g_u(t))\geq M_r(a),
\end{equation*}
which implies that $\inf_{\mathcal{P}_{a,-}\cap
H_r^1(\mathbb{R}^N)}E(u)\geq M_r(a)$. The proof is complete.
\end{proof}

When $\mu a^{\frac{q(1-\gamma_q)}{2}}<
(2K)^{\frac{q\gamma_q-2\bar{p}}{2(\bar{p}-1)}}$, the lower bound of
$\inf_{u\in \mathcal{P}_{a,-}}E(u)$ is already studied in Lemma
\ref{lem6.6}. For clarity, we restate it in the following lemma.

\begin{lemma}\label{lem7.23}
Let $N\geq 3$, $\alpha\in (0,N)$, $p=\bar{p}$, $2<q<2+\frac{4}{N}$,
$\mu>0$, $a>0$ and $\mu a^{\frac{q(1-\gamma_q)}{2}}<
(2K)^{\frac{q\gamma_q-2\bar{p}}{2(\bar{p}-1)}}$. Then $\inf_{u\in
\mathcal{P}_{a,-}}E(u)>0$.
\end{lemma}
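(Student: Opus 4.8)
The plan is to deduce the bound directly from the fiber‑map analysis carried out in Lemma \ref{lem6.6}, so that almost no new work is needed. The crucial observation is that every $u\in\mathcal{P}_{a,-}$ sits on the dilation orbit $\tau\mapsto u_\tau$ precisely at the \emph{larger} critical point $\tau_u^-$ of $\Psi_u$, hence $E(u)$ equals the global maximum of $\Psi_u$, and this maximum is bounded below by the $u$-independent quantity $\inf_{v\in\partial V_a}E(v)$, which is strictly positive in the present regime.

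First I would record that, for every $u\in S_a$, the explicit formula (\ref{e6.6}) gives $\Psi_u'(1)=\|\nabla u\|_2^2-\int_{\mathbb{R}^N}(I_\alpha\ast|u|^{\bar{p}})|u|^{\bar{p}}-\mu\gamma_q\|u\|_q^q=P(u)$. Therefore, if $u\in\mathcal{P}_{a,-}\subset\mathcal{P}_a$, then $\tau=1$ is a critical point of $\Psi_u$, so by Lemma \ref{lem6.6} either $1=\tau_u^+$ or $1=\tau_u^-$. The first alternative cannot occur: by Lemma \ref{lem6.6}(1) it would force $E(u)=\Psi_u(1)=\Psi_u(\tau_u^+)<0$, contradicting the defining condition $E(u)>0$ of $\mathcal{P}_{a,-}$. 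Hence $\tau_u^-=1$, and consequently $E(u)=\Psi_u(1)=\Psi_u(\tau_u^-)=E(u_{\tau_u^-})$.

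Next I would check that the strict hypothesis forces $a<a_0$: since $0<\gamma_q<1$ the map $a\mapsto\mu a^{\frac{q(1-\gamma_q)}{2}}$ is strictly increasing, so $\mu a^{\frac{q(1-\gamma_q)}{2}}<(2K)^{\frac{q\gamma_q-2\bar{p}}{2(\bar{p}-1)}}=\mu a_0^{\frac{q(1-\gamma_q)}{2}}$ by (\ref{e6.3}) gives $a<a_0$. For such $a$ one has $f_{\mu,a}(\rho_0)>0$ by Lemmas \ref{lem6.1} and \ref{lem6.2}, so by (\ref{e3.11}), for every $v\in\partial V_a$ (where $\|\nabla v\|_2^2=\rho_0$) we get $E(v)\geq\rho_0 f_{\mu,a}(\rho_0)>0$; this is exactly the ``in particular'' clause of Lemma \ref{lem6.6}(2). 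Combining with the previous step, for every $u\in\mathcal{P}_{a,-}$,
\begin{equation*}
E(u)=E(u_{\tau_u^-})\geq\inf_{v\in\partial V_a}E(v)\geq\rho_0 f_{\mu,a}(\rho_0)>0,
\end{equation*}
and since the right-hand side is independent of $u$, taking the infimum over $\mathcal{P}_{a,-}$ gives $\inf_{u\in\mathcal{P}_{a,-}}E(u)\geq\rho_0 f_{\mu,a}(\rho_0)>0$ (the statement being vacuous if $\mathcal{P}_{a,-}=\emptyset$).

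There is no genuinely hard step: the lemma is a bookkeeping corollary of Lemma \ref{lem6.6}. The only point that needs a moment's care is the identification $\tau_u^-=1$ for $u\in\mathcal{P}_{a,-}$, i.e.\ excluding $\tau_u^+=1$; this is precisely where the sign condition $E(u)>0$ built into the definition of $\mathcal{P}_{a,-}$ enters, and it is why the conclusion holds on $\mathcal{P}_{a,-}$ but not on all of $\mathcal{P}_a$.
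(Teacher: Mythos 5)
Your proof is correct and follows exactly the route the paper intends: the paper's ``proof'' of this lemma is simply the remark that the bound is already contained in Lemma \ref{lem6.6}, and your argument (identifying $\tau_u^-=1$ for $u\in\mathcal{P}_{a,-}$ via $\Psi_u'(1)=P(u)=0$ together with $E(u)>0$, then invoking $E(u_{\tau_u^-})\geq\inf_{\partial V_a}E\geq\rho_0 f_{\mu,a}(\rho_0)>0$ for $a<a_0$) is precisely the content being deferred to. You have merely written out the details the paper leaves implicit, including the uniform lower bound $\rho_0 f_{\mu,a}(\rho_0)$.
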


When $\mu
a^{\frac{q(1-\gamma_q)}{2}}=(2K)^{\frac{q\gamma_q-2\bar{p}}{2(\bar{p}-1)}}$,
by Lemma \ref{lem6.6}, $\inf_{u\in \mathcal{P}_{a,-}}E(u)\geq 0$.
Now we prove that the strict inequality holds.

\begin{lemma}\label{lem7.21}
Let $N\geq 3$, $\alpha\in (0,N)$, $p=\bar{p}$, $2<q<2+\frac{4}{N}$,
$\mu>0$, $a>0$ and $\mu a^{\frac{q(1-\gamma_q)}{2}}=
(2K)^{\frac{q\gamma_q-2\bar{p}}{2(\bar{p}-1)}}$. Then $\inf_{u\in
\mathcal{P}_{a,-}}E(u)>0$.
\end{lemma}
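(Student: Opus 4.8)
The plan is to argue by contradiction. By Lemma \ref{lem6.6} (applied with $a=a_0$, where $a_0$ is the threshold mass defined by \eqref{e6.3}, so that in the statement $a=a_0$ and $f_{\mu,a_0}(\rho_0)=0$) we already know $\inf_{u\in\mathcal P_{a_0,-}}E(u)\ge 0$, so suppose the infimum equals $0$. The goal will be to extract from a minimizing sequence a function $u\in H^1(\mathbb R^N)$ realizing equality \emph{simultaneously} in the inequality \eqref{e3.14} and in the Gagliardo--Nirenberg inequality of Lemma \ref{lem2.5}, and then invoke the fact that the two families of extremizers are disjoint.

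First I would reformulate the energy on $\mathcal P_{a_0,-}$. Put $\beta:=\tfrac1{q\gamma_q}-\tfrac12$ and $\delta:=\tfrac1{q\gamma_q}-\tfrac1{2\bar p}$; since $q<2+\tfrac4N$ gives $q\gamma_q=\tfrac{N(q-2)}2<2$, one has $0<\beta<\delta$ and $\delta-\beta=\tfrac{\bar p-1}{2\bar p}$. For $u$ with $P(u)=0$, eliminating $\|u\|_q^q$ in $E$ yields $E(u)=-\beta\|\nabla u\|_2^2+\delta\int_{\mathbb R^N}(I_\alpha\ast|u|^{\bar p})|u|^{\bar p}$. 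Two consequences: (i) using $\int(I_\alpha\ast|u|^{\bar p})|u|^{\bar p}\le S_\alpha^{-\bar p}\|\nabla u\|_2^{2\bar p}$ from \eqref{e3.14} and the identity $\tfrac\beta\delta S_\alpha^{\bar p}=\rho_0^{\bar p-1}$ (immediate from \eqref{e**}), the defining condition $E(u)>0$ of $\mathcal P_{a_0,-}$ forces $\|\nabla u\|_2^2>\rho_0$; (ii) bounding instead $\mu\gamma_q\|u\|_q^q\le c\,\|\nabla u\|_2^{q\gamma_q}$ via Lemma \ref{lem2.5}, with $c:=\mu\gamma_q C_{N,q}^q a_0^{q(1-\gamma_q)/2}$, gives $E(u)\ge\phi(\|\nabla u\|_2^2)$, where $\phi(t):=\tfrac{\bar p-1}{2\bar p}t-\delta c\,t^{q\gamma_q/2}$. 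A short computation from $f_{\mu,a_0}(\rho_0)=0$ gives $\delta c\,\rho_0^{q\gamma_q/2-1}=\tfrac{\bar p-1}{2\bar p}$, hence $\phi(\rho_0)=0$, $\phi'(\rho_0)=\tfrac{\bar p-1}{2\bar p}\bigl(1-\tfrac{q\gamma_q}2\bigr)>0$; since $t\mapsto t^{q\gamma_q/2-1}$ decreases, $\phi$ is strictly increasing on $[\rho_0,\infty)$ with $\phi(t)\to+\infty$.

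Next, take $u_n\in\mathcal P_{a_0,-}$ with $E(u_n)\to0$ and set $t_n:=\|\nabla u_n\|_2^2$. From $0<\phi(t_n)\le E(u_n)\to0$, $t_n>\rho_0$, and $\phi\to\infty$, the $t_n$ are bounded and $t_n\to\rho_0$; so $\{u_n\}$ is bounded in $H^1$. Both ``deficits'' vanish: $E(u_n)-\phi(t_n)=\delta\bigl(c\,t_n^{q\gamma_q/2}-\mu\gamma_q\|u_n\|_q^q\bigr)\to0$ shows $\|u_n\|_q^q$ asymptotically realizes equality in Lemma \ref{lem2.5}, while $\delta\int(I_\alpha\ast|u_n|^{\bar p})|u_n|^{\bar p}=E(u_n)+\beta t_n\to\beta\rho_0$ gives $\int(I_\alpha\ast|u_n|^{\bar p})|u_n|^{\bar p}\to\tfrac\beta\delta\rho_0=S_\alpha^{-\bar p}\rho_0^{\bar p}$, which combined with $\int(I_\alpha\ast|u_n|^{\bar p})|u_n|^{\bar p}\le S_\alpha^{-\bar p}t_n^{\bar p}$ shows asymptotic equality in \eqref{e3.14}. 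As $\|u_n\|_q^q$ stays bounded away from $0$, Lions' vanishing lemma (\cite{Willem 1996}, Lemma 1.21) lets me translate so that $u_n\rightharpoonup u\neq0$ weakly in $H^1$. Applying Brezis--Lieb to $\|\cdot\|_2^2$, $\|\cdot\|_q^q$ and (via Lemma \ref{lem33.3}, as in the proof of Proposition \ref{pro6.1}) to the nonlocal term, together with the strict superadditivity of $(s,r)\mapsto s^{q(1-\gamma_q)/2}r^{q\gamma_q/2}$ on $[0,\infty)^2$ — valid because its homogeneity degree $\tfrac q2>1$ (a Young-inequality argument, with equality only when one argument vanishes) — the asymptotic saturation of Lemma \ref{lem2.5} rules out any loss of mass/gradient to $u_n-u$; hence $u_n\to u$ strongly in $H^1$, with $\|u\|_2^2=a_0$, $\|\nabla u\|_2^2=\rho_0$ (so $u\in\partial V_{a_0}$) and $E(u)=0$. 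Then \eqref{e3.11} reads $0=E(u)\ge\rho_0 f_{\mu,a_0}(\rho_0)=0$, so the whole chain \eqref{e3.11} is an equality, meaning $u$ attains equality in \eqref{e3.14} and in Lemma \ref{lem2.5} at once. But equality in \eqref{e3.14} forces $|u|$ to be, up to translation and dilation, the Gao--Yang extremal $\propto(1+|x|^2)^{-(N-2)/2}$ with polynomial decay (\cite{Gao-Yang-1}), while equality in Lemma \ref{lem2.5} forces $|u|$ to be, up to translation and dilation, $\propto Q_q$, which decays exponentially — incompatible. This contradiction gives $\inf_{u\in\mathcal P_{a_0,-}}E(u)>0$.

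The step I expect to be the crux is the conceptual one: at the threshold $a=a_0$ every estimate used in Lemma \ref{lem7.23} degenerates to exactly $0$, so positivity can only be recovered by passing to a minimizing sequence and exploiting the rigidity of the equality cases — the observation that a hypothetical minimizer would have to be extremal for \emph{both} \eqref{e3.14} and Lemma \ref{lem2.5}. The only genuinely technical point is then the no-dichotomy argument for the Gagliardo--Nirenberg-optimizing sequence (Brezis--Lieb plus the superadditivity inequality), after which the clash between the two extremizer families closes the proof; the rest is bookkeeping with the explicit constants $\rho_0$ and $K$.
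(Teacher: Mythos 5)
Your proof is correct, and its core is the same as the paper's: argue by contradiction, use $P(u_n)=0$ together with $E(u_n)\to 0^+$ to pin down $\|\nabla u_n\|_2^2\to\rho_0$, deduce that the sequence asymptotically saturates \emph{both} the Gagliardo--Nirenberg inequality of Lemma \ref{lem2.5} and the inequality defining $S_\alpha$ in (\ref{e3.14}), pass to a nontrivial limit, and derive a contradiction from the fact that the two extremizer families (Weinstein's $Q_q$, exponentially decaying, versus the Gao--Yang bubble $\propto(b^2+|x|^2)^{-(N-2)/2}$, polynomially decaying) are disjoint. Your algebra with $\beta$, $\delta$ and $\phi$ is a clean repackaging of the paper's system (\ref{e7.22}), and the identities $\tfrac{\beta}{\delta}S_\alpha^{\bar p}=\rho_0^{\bar p-1}$ and $\delta c\,\rho_0^{q\gamma_q/2-1}=\tfrac{\bar p-1}{2\bar p}$ check out. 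The one genuinely different step is the compactness mechanism: the paper first invokes Lemma \ref{pro1.3} to replace the minimizing sequence by a radial one and then uses the compact embedding $H^1_r\hookrightarrow L^q$ plus weak lower semicontinuity to identify the limit as a GN minimizer; you instead work with an arbitrary sequence in $\mathcal{P}_{a,-}$ and restore compactness up to translations via Lions' vanishing lemma, Brezis--Lieb, and the strict superadditivity of $(s,r)\mapsto s^{q(1-\gamma_q)/2}r^{q\gamma_q/2}$. Your route is self-contained (it does not rely on the rearrangement identity $\inf_{\mathcal{P}_{a,-}}E=\inf_{\mathcal{P}_{a,-}\cap H^1_r}E$), at the price of a slightly longer dichotomy-exclusion argument; the paper's route is shorter because Lemma \ref{pro1.3} is already available. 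Two minor cosmetic points: the Brezis--Lieb splitting of the nonlocal term that you cite via Lemma \ref{lem33.3} is not actually needed (once strong $H^1$ convergence is established, continuity of the nonlocal term via (\ref{e22.4}) suffices), and in the superadditivity step you should record that the equality case is ``one of the two pairs $(\|u\|_2^2,\|\nabla u\|_2^2)$, $(\lim\|v_n\|_2^2,\lim\|\nabla v_n\|_2^2)$ is $(0,0)$'' after extracting convergent subsequences of these bounded scalars; with $u\neq 0$ this forces $v_n\to 0$ in $H^1$, as you assert.
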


\begin{proof}
Suppose by contradiction that $\inf_{u\in \mathcal{P}_{a,-}}E(u)=0$.
Since by Lemma \ref{pro1.3},
$\inf_{\mathcal{P}_{a,-}}E(u)=\inf_{\mathcal{P}_{a,-}\cap
H_{r}^1(\mathbb{R}^N)}E(u)$, there exists $\{u_n\}\subset S_a\cap
H_r^1(\mathbb{R}^N)$ such that $P(u_n)=0$ and $E(u_n)=A_n$, where
$A_n\to 0^+$ as $n\to \infty$. By using $E(u_n)=o_n(1)$,
$P(u_n)=A_n$, $\|u_n\|_2^2=a$, (\ref{e3.14}) and Lemma \ref{lem2.5},
we obtain that
\begin{equation}\label{e7.22}
\begin{cases}
\|\nabla u_n\|_2^2=\frac{2\bar{p}-q\gamma_q}{q(\bar{p}-1)}\mu
\|u_n\|_q^q+C_1A_n\leq \frac{2\bar{p}-q\gamma_q}{q(\bar{p}-1)}\mu
C_{N,q}^q a^{\frac{q}{2}(1-\gamma_q)}\|\nabla u_n\|_2^{q\gamma_q}+C_1A_n,\\
\|\nabla
u_n\|_2^2=\frac{2\bar{p}-q\gamma_q}{\bar{p}(2-q\gamma_q)}\int_{\mathbb{R}^N}(I_\alpha\ast|u_n|^{\bar{p}})|u_n|^{\bar{p}}-C_2A_n
\leq \frac{2\bar{p}-q\gamma_q}{\bar{p}(2-q\gamma_q)}
S_\alpha^{-\bar{p}}\|\nabla u_n\|_2^{2\bar{p}}-C_2A_n,
\end{cases}
\end{equation}
where $C_1$ and $C_2$ are some positive constants. Consequently,
$\liminf_{n\to\infty}\|\nabla u_n\|_2^2>0$ and
\begin{equation*}
\begin{cases}
\|\nabla u_n\|_2^{2(1-q\gamma_q/2)}\leq \left(\frac{2\bar{p}-q\gamma_q}{\bar{p}(2-q\gamma_q)S_\alpha^{\bar{p}}}\right)^{\frac{q\gamma_q-2}{2(\bar{p}-1)}}+o_n(1),\\
\|\nabla u_n\|_2^{2(\bar{p}-1)}\geq
\frac{\bar{p}(2-q\gamma_q)S_\alpha^{\bar{p}}}{2\bar{p}-q\gamma_q}+o_n(1),
\end{cases}
\end{equation*}
which implies that
\begin{equation*}
\begin{cases}
\|\nabla u_n\|_2^{2}\leq  \rho_0+o_n(1),\\
\|\nabla u_n\|_2^{2}\geq \rho_0+o_n(1).
\end{cases}
\end{equation*}
Hence, $\|\nabla u_n\|_2^2\to \rho_0$ as $n\to \infty$, which
combines with (\ref{e7.22}) give that
\begin{equation}\label{e7.23}
\begin{cases}
\|u_n\|_q^q\to C_{N,q}^q \|u_n\|_2^{q(1-\gamma_q)}\|\nabla u_n\|_2^{q\gamma_q},\\
\int_{\mathbb{R}^N}(I_\alpha\ast|u_n|^{\bar{p}})|u_n|^{\bar{p}}\to
S_\alpha^{-\bar{p}}\|\nabla u_n\|_2^{2\bar{p}}
\end{cases}
\end{equation}
as $n\to \infty$. That is, $\{u_n\}\subset H_r^1(\mathbb{R}^N)$ is a
minimizing sequence of
\begin{equation}\label{e7.24}
\begin{split}
\frac{1}{C_{N,q}^q}:&=\inf_{ u\in
H^{1}(\mathbb{R}^N)\setminus\{0\}}\frac{\|\nabla
u\|_2^{q\gamma_q}\|u\|_2^{q(1-\gamma_q)}}{\|u\|_q^q}
\end{split}
\end{equation}
and
\begin{equation}\label{e7.25}
\begin{split}
S_\alpha:&=\inf_{ u\in
D^{1,2}(\mathbb{R}^N)\setminus\{0\}}\frac{\|\nabla
u\|_2^2}{\left(\int_{\mathbb{R}^N}\left(I_{\alpha}\ast
|u|^{\bar{p}}\right)|u|^{\bar{p}}\right)^{{1}/{\bar{p}}}}.
\end{split}
\end{equation}
Since $\{u_n\}\subset H_r^1(\mathbb{R}^N)$ is bounded, there exists
$u_0\in H_r^1(\mathbb{R}^N)$ such that $u_n\rightharpoonup u_0$
weakly in $H^1(\mathbb{R}^N)$, $u_n\to u_0$ strongly in
$L^t(\mathbb{R}^N)$ with $t\in (2,2^*)$ and $u_n\to u_0$ a.e. in
$\mathbb{R}^N$. By the weak convergence, we have $\|u_0\|_2^2\leq
\|u_n\|_2^2$ and $\|\nabla u_0\|_2^2\leq \|\nabla u_n\|_2^2$.
Consequently, $u_0$ is a minimizer of (\ref{e7.24}) and $u_n\to u_0$
strongly in $H^1(\mathbb{R}^N)$. By Theorem B in \cite{Weinstein
1983}, $u_0$ is the ground state of the equation
\begin{equation}\label{e7.26}
\frac{(q-2)N}{4}\Delta
u-\left(1+\frac{(q-2)(2-N)}{4}\right)u+|u|^{q-2}u=0.
\end{equation}
By using (\ref{e7.25}) and $u_n\to u_0$ strongly in
$H^1(\mathbb{R}^N)$, we obtain that $u_0$ is a minimizer of
$S_\alpha$. So $u_0$ is of the form
\begin{equation}\label{e7.27}
u_0=C\left(\frac{b}{b^2+|x|^2}\right)^{\frac{N-2}{2}},
\end{equation}
where $C>0$ is a fixed constant and $b\in (0,\infty)$ is a
parameter, see \cite{Gao-Yang-1}. (\ref{e7.27}) contradicts to
(\ref{e7.26}). Thus, $\inf_{u\in \mathcal{P}_{a,-}}E(u)>0$.
\end{proof}

The next two lemmas are about the upper bound of $\inf_{u\in
\mathcal{P}_{a,-}}E(u)$.

\begin{lemma}\label{lem7.1}
Let $N\geq 3$, $\alpha\in (0,N)$, $p=\bar{p}$, $q\in
(2,2+\frac{4}{N})$, $\mu>0$, $a>0$ and $\mu
a^{\frac{q(1-\gamma_q)}{2}}\leq
(2K)^{\frac{q\gamma_q-2\bar{p}}{2(\bar{p}-1)}}$. If $N\geq 5$, we
further assume that $\bar{p}\geq 2$ (i.e., $\alpha\geq N-4$). Then
\begin{equation*}
\inf_{u\in
\mathcal{P}_{a,-}}E(u)<m_a+\frac{2+\alpha}{2(N+\alpha)}S_\alpha^{\frac{N+\alpha}{2+\alpha}}.
\end{equation*}
\end{lemma}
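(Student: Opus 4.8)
The plan is to exhibit a test path in $\Gamma_r(a)$ along which $\max_t E$ stays strictly below $m_a + \frac{2+\alpha}{2(N+\alpha)}S_\alpha^{\frac{N+\alpha}{2+\alpha}}$, and then invoke Lemma \ref{pro1.3} to conclude $\inf_{\mathcal{P}_{a,-}}E = M_r(a)$ lies strictly below this value. Since $\inf_{\mathcal{P}_{a,-}}E$ equals the mountain pass level $M_r(a)=M(a)$, it suffices to build one good path. The natural choice, following \cite{Jeanjean-Le} and \cite{Wei-Wu 2021}, is to start from the ground state $u_+ = \tilde u$ obtained in Theorem \ref{thm6.1} (which we may take positive and radially non-increasing by part (4)), and perturb it by a suitably rescaled Talenti-type bubble $U_\varepsilon$, a minimizer of $S_\alpha$ in $D^{1,2}(\mathbb{R}^N)$ localized at scale $\varepsilon$, i.e. consider $w_\varepsilon = u_+ + U_\varepsilon$ (possibly with a small cut-off and $L^2$-renormalization) and then dilate via $\tau \mapsto (w_\varepsilon)_\tau$ along the fiber. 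The path $t \mapsto (w_\varepsilon)_{t+\tau_{w_\varepsilon}^+}$ lies in $\Gamma_r(a)$ by Lemma \ref{lem6.6}, and its maximum over $t$ is $E((w_\varepsilon)_{\tau_{w_\varepsilon}^-})$.

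\textbf{Key steps.} First I would record the standard asymptotic expansions as $\varepsilon \to 0$ of the quantities entering $E$ and $P$ evaluated on $w_\varepsilon$: the Sobolev-type energy $\|\nabla w_\varepsilon\|_2^2 = \|\nabla u_+\|_2^2 + \|\nabla U_\varepsilon\|_2^2 + o(\varepsilon^{?})$, the nonlocal term $\int (I_\alpha \ast |w_\varepsilon|^{\bar p})|w_\varepsilon|^{\bar p}$, which by the Brezis--Lieb-type splitting and the interaction estimates equals $\int (I_\alpha\ast |u_+|^{\bar p})|u_+|^{\bar p} + \int(I_\alpha\ast U_\varepsilon^{\bar p})U_\varepsilon^{\bar p} + (\text{cross terms})$, and the lower-order term $\|w_\varepsilon\|_q^q = \|u_+\|_q^q + o(1)$. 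The cross terms are where the hypothesis $\bar p \geq 2$ when $N \geq 5$ is used: when $\bar p \geq 2$ one has the elementary inequality $|a+b|^{\bar p} \geq |a|^{\bar p} + |b|^{\bar p} + \bar p(|a|^{\bar p - 2}a\,b + |b|^{\bar p - 2}b\,a)$ (or an analogous one-sided bound), which lets us bound the nonlocal interaction from below by a positive quantity of the correct order; for $\bar p < 2$ the nonlinearity $(I_\alpha\ast|u|^{\bar p})|u|^{\bar p-2}u$ is singular and this direct manipulation fails, which is exactly why Lemma \ref{lem7.1} postpones that case (handled in Lemma \ref{lem7.3} via the careful choice of translation $y_\varepsilon$). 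Second, using these expansions I would estimate $\max_{\tau>0}\Psi_{w_\varepsilon}(\tau)$. The point is that $\Psi_{w_\varepsilon}(\tau) = \frac12\tau^2\|\nabla w_\varepsilon\|_2^2 - \frac{1}{2\bar p}\tau^{2\bar p}\int(I_\alpha\ast|w_\varepsilon|^{\bar p})|w_\varepsilon|^{\bar p} - \frac{\mu}{q}\tau^{q\gamma_q}\|w_\varepsilon\|_q^q$; dropping the (negative, lower-order in the relevant regime) $q$-term for an upper bound and optimizing the remaining two terms gives a contribution of size $\frac{2+\alpha}{2(N+\alpha)}\big(\|\nabla w_\varepsilon\|_2^2\big)^{\frac{N+\alpha}{2+\alpha}}\big(\int(I_\alpha\ast|w_\varepsilon|^{\bar p})|w_\varepsilon|^{\bar p}\big)^{-\frac{N-2}{2+\alpha}}$, which by the sub-additivity of the pieces and the definition of $S_\alpha$ is bounded by $E(u_+) + \frac{2+\alpha}{2(N+\alpha)}S_\alpha^{\frac{N+\alpha}{2+\alpha}}$ plus an error that, for $\varepsilon$ small, is strictly negative thanks to the cross-term gain (the interaction energy beats the $\varepsilon$-order loss in the $q$-term and the $u_+ U_\varepsilon$ Sobolev cross term). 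Since $E(u_+) = m_a$, this yields $E((w_\varepsilon)_{\tau_{w_\varepsilon}^-}) < m_a + \frac{2+\alpha}{2(N+\alpha)}S_\alpha^{\frac{N+\alpha}{2+\alpha}}$ for $\varepsilon$ small enough.

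\textbf{Main obstacle.} The delicate point is the sign and order of the error term in the expansion of $\max_\tau \Psi_{w_\varepsilon}(\tau)$: one must show that the \emph{positive} contribution from the nonlocal interaction term (scaling like $\varepsilon^{(N-2)/2}$ or $\varepsilon^{\alpha/2}$ depending on the range of $\alpha$, up to logarithms in borderline dimensions) dominates both the \emph{negative} loss coming from the local perturbation $\|w_\varepsilon\|_q^q$ and any Sobolev cross term, uniformly in the admissible parameter window $\mu a^{q(1-\gamma_q)/2} \le (2K)^{(q\gamma_q - 2\bar p)/(2(\bar p - 1))}$. This is precisely the classical Brezis--Nirenberg-type competition, here complicated by the Riesz convolution, and it is where the hypothesis $\bar p \ge 2$ (for $N \ge 5$) enters to make the interaction estimate tractable; I expect this bookkeeping — establishing the exact power of $\varepsilon$ in each term and verifying the favorable sign — to be the bulk of the technical work, the rest following from the fiber-map machinery of Lemma \ref{lem6.6} and the characterization in Lemma \ref{pro1.3}.
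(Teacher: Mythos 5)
Your overall template (ground state plus a cut-off Aubin--Talenti bubble, Brezis--Lieb-type splitting of the nonlocal term, elementary convexity inequalities available when $\bar p\ge 2$, and the fiber-map/Poho\v{z}aev machinery of Lemmas \ref{lem6.6} and \ref{pro1.3}) is the right one and matches the paper. But the step where you actually produce the bound $m_a+\frac{2+\alpha}{2(N+\alpha)}S_\alpha^{\frac{N+\alpha}{2+\alpha}}$ has a genuine gap. You fix the bubble coefficient at $1$, take the dilation path $\tau\mapsto (w_\epsilon)_\tau$, drop the $q$-term, and claim that optimizing $\frac12\tau^2\|\nabla w_\epsilon\|_2^2-\frac1{2\bar p}\tau^{2\bar p}\int(I_\alpha\ast|w_\epsilon|^{\bar p})|w_\epsilon|^{\bar p}$ over $\tau$ and invoking ``sub-additivity'' yields $E(u_+)+\frac{2+\alpha}{2(N+\alpha)}S_\alpha^{\frac{N+\alpha}{2+\alpha}}$ plus a negative error. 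This cannot work: sub-additivity of $\max_\tau$ bounds the $u_+$-contribution by $\max_\tau\Psi_{u_+}(\tau)=\Psi_{u_+}(\tau_{u_+}^-)\ge\inf_{\partial V_a}E\ge0$, not by $E(u_+)=\Psi_{u_+}(\tau_{u_+}^+)=m_a<0$. Since $\tau=1$ is the local \emph{minimum} of the fiber map of $u_+$, discarding the $q$-term (which is exactly what makes $m_a$ negative) and then maximizing over $\tau$ loses the quantity $m_a$ entirely and produces a bound that is too large by at least $|m_a|$.

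The paper avoids this by using the bubble coefficient $t$, not the dilation, as the path parameter: it sets $\hat u_{\epsilon,t}=u_++tu_\epsilon$, renormalizes in $L^2$, shows by continuity of $\tau^-_{\epsilon,t}$ that some $t_\epsilon$ puts $\bar u_{\epsilon,t_\epsilon}$ on $\mathcal{P}_{a,-}$, and then estimates $\sup_{t}E(\bar u_{\epsilon,t})$ directly at dilation $\tau=1$. The cross terms linear in $t$ are cancelled using the Euler--Lagrange equation satisfied by $u_+$ together with the identity $\lambda a=\mu(\gamma_q-1)\|u_+\|_q^q$, which is what legitimately produces $E(u_+)+E(tu_\epsilon)$ plus controllable errors; this cancellation mechanism is absent from your outline. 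Your account of the sign competition is also off: the local term enters $E$ with a minus sign, so $-\frac{\mu}{q}t^q\|u_\epsilon\|_q^q$ is a \emph{gain} rather than a loss, and for $N\ge4$ it is precisely this term (of order $\epsilon^{N-\frac{N-2}{2}q}$, which dominates the $O(\|u_\epsilon\|_2^2)$ and $O(\epsilon^{\frac{N-2}{2}})$ error terms) that closes the strict inequality, while for $N=3$ the job is done by the nonlocal cross term $-\int(I_\alpha\ast|tu_\epsilon|^{\bar p})|tu_\epsilon|^{\bar p-1}u_+\lesssim-\epsilon^{\frac{N-2}{2}}$ made available by the convexity inequality for exponents $r\ge3$.
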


\begin{proof}
For any $\epsilon>0$, we define
\begin{equation}\label{e8.10}
u_\epsilon(x)=\varphi(x)U_\epsilon(x),
\end{equation}
where $\varphi(x) \in C_c^{\infty}(\mathbb{R}^N)$ is a cut off
function satisfying: (a) $0\leq \varphi(x)\leq 1$ for any $x\in
\mathbb{R}^N$; (b) $\varphi(x)\equiv 1$ in $B_1$; (c)
$\varphi(x)\equiv 0$ in $\mathbb{R}^N\setminus \overline{B_2}$.
Here, $B_s$ denotes the ball in $\mathbb{R}^N$ of center at origin
and radius $s$.
\begin{equation*}
U_\epsilon(x)=\frac{\left(N(N-2)\epsilon^2\right)^{\frac{N-2}{4}}}{\left(\epsilon^2+|x|^2\right)^{\frac{N-2}{2}}},
\end{equation*}
where $U_1(x)$ is the extremal function of the minimizing problem
(\ref{e3.14}). In \cite{Gao-Yang-1}, they proved that
$S_\alpha=\frac{S}{(A_\alpha(N) C_\alpha(N))^{{1}/{\bar{p}}}}$,
where $A_\alpha(N)$ is defined in (\ref{e1.3}), $C_\alpha(N)$ is in
Lemma \ref{lem HLS} and
\begin{equation*}
S:=\inf_{ u\in
D^{1,2}(\mathbb{R}^N)\setminus\{0\}}\frac{\int_{\mathbb{R}^N}|\nabla
u|^2}{\left(\int_{\mathbb{R}^N}|u|^{\frac{2N}{N-2}}\right)^{\frac{N-2}{N}}}.
\end{equation*}

By \cite{Brezis-Nirenberg 1983} (see also \cite{Willem 1996}), we
have the following estimates.
\begin{equation}\label{e7.2}
\int_{\mathbb{R}^N}|\nabla
u_\epsilon|^2=S^{\frac{N}{2}}+O(\epsilon^{N-2}),\ N\geq 3,
\end{equation}
and
\begin{equation}\label{e7.3}
\int_{\mathbb{R}^N}| u_\epsilon|^2=\left\{\begin{array}{ll}
K_2\epsilon^2+O(\epsilon^{N-2}),& N\geq 5,\\
K_2\epsilon^2|\ln \epsilon|+O(\epsilon^2),& N=4,\\
K_2\epsilon+O(\epsilon^2),& N=3,
\end{array}\right.
\end{equation}
where $K_2>0$. By direct calculation, for  $t\in (2,2^*)$, there
exists $K_1>0$ such that
\begin{equation}\label{e7.4}
\begin{split}
\int_{\mathbb{R}^N}|u_\epsilon|^t &\geq
(N(N-2))^{\frac{N-2}{4}t}\epsilon^{N-\frac{N-2}{2}t}\int_{B_{\frac{1}{\epsilon}}(0)}\frac{1}{(1+|x|^2)^{\frac{N-2}{2}t}}dx\\
&\geq \left\{\begin{array}{ll}
K_1\epsilon^{N-\frac{N-2}{2}t},& (N-2)t>N,\\
K_1\epsilon^{N-\frac{N-2}{2}t}|\ln \epsilon|,& (N-2)t=N,\\
K_1\epsilon^{\frac{N-2}{2}t},& (N-2)t<N.
\end{array}\right.
\end{split}
\end{equation}
Moreover, similarly as in \cite{Gao-Yang-2} and \cite{Gao-Yang-1},
by direct computation, we have
\begin{equation}\label{e7.5}
\int_{\mathbb{R}^N}\left(I_\alpha\ast|u_\epsilon|^{\bar{p}}\right)
|u_\epsilon|^{\bar{p}}\geq (A_\alpha(N)
C_\alpha(N))^{\frac{N}2}S_\alpha^{\frac{N+\alpha}2}
+O(\epsilon^{\frac{N+\alpha}{2}}).
\end{equation}

Let $u_+$ be a positive and radially symmetric non-increasing ground
state to (\ref{e1.5}). For $t\geq 0$, we define
\begin{equation}\label{e8.11}
\hat{u}_{\epsilon,t}=u_++tu_{\epsilon}\ \mathrm{and}\
\bar{u}_{\epsilon,t}=\left(a^{-\frac{1}{2}}\|\hat{u}_{\epsilon,t}\|_2\right)^{\frac{N-2}{2}}
\hat{u}_{\epsilon,t}\left(a^{-\frac{1}{2}}\|\hat{u}_{\epsilon,t}\|_2x\right).
\end{equation}
Then
\begin{equation*}
\int_{\mathbb{R}^N}|\bar{u}_{\epsilon,t}|^2=a,\
\int_{\mathbb{R}^N}|\nabla\bar{u}_{\epsilon,t}|^2=\int_{\mathbb{R}^N}|\nabla\hat{u}_{\epsilon,t}|^2,
\end{equation*}
\begin{equation*}
\int_{\mathbb{R}^N}(I_\alpha\ast|\bar{u}_{\epsilon,t}|^{\bar{p}})|\bar{u}_{\epsilon,t}|^{\bar{p}}=
\int_{\mathbb{R}^N}(I_\alpha\ast|\hat{u}_{\epsilon,t}|^{\bar{p}})|\hat{u}_{\epsilon,t}|^{\bar{p}},
\end{equation*}
\begin{equation*}
\int_{\mathbb{R}^N}|\bar{u}_{\epsilon,t}|^{q}=\left(a^{-\frac{1}{2}}\|\hat{u}_{\epsilon,t}\|_2\right)^{q\gamma_q-q}
\int_{\mathbb{R}^N}|\hat{u}_{\epsilon,t}|^{q}.
\end{equation*}
Since $\bar{u}_{\epsilon,t}\in S_a$, by Lemma \ref{lem6.6}, there
exists a unique $\tau_{\epsilon,t}^->0$ such that
$(\bar{u}_{\epsilon,t})_{\tau_{\epsilon,t}^-}\in \mathcal{P}_{a,-}$,
which implies that
\begin{equation}\label{e7.1}
(\tau_{\epsilon,t}^-)^{2-q\gamma_q}\|\nabla\bar{u}_{\epsilon,t}\|_2^2
=(\tau_{\epsilon,t}^-)^{2\bar{p}-q\gamma_q}
\int_{\mathbb{R}^N}(I_\alpha\ast|\bar{u}_{\epsilon,t}|^{\bar{p}})|\bar{u}_{\epsilon,t}|^{\bar{p}}
+\mu\gamma_q\|\bar{u}_{\epsilon,t}\|_q^q.
\end{equation}
Since $\bar{u}_{\epsilon,0}=u_+\in \mathcal{P}_{a,+}$, by Lemma
\ref{lem6.6}, $\tau_{\epsilon,0}^->1$. By (\ref{e7.2}), (\ref{e7.5})
and (\ref{e7.1}), $\tau_{\epsilon,t}^-\to 0$ as $t\to +\infty$
uniformly for $\epsilon>0$ sufficiently small. Since
$\tau_{\epsilon,t}^-$ is unique by Lemma \ref{lem6.6}, it is
standard to show that $\tau_{\epsilon,t}^-$ is continuous for $t\geq
0$, which implies that there exists $t_\epsilon>0$ such that
$\tau_{\epsilon,t_\epsilon}^-=1$. Consequently, $\inf_{u\in
\mathcal{P}_{a,-}}E(u)\leq \sup_{t\geq 0}E(\bar{u}_{\epsilon,t})$
for any $\epsilon$ small enough. By (\ref{e7.2})-(\ref{e7.5}), and
the expression
\begin{equation}\label{e7.7}
\begin{split}
E(\bar{u}_{\epsilon,t})&=\frac{1}{2}\|\nabla
\hat{u}_{\epsilon,t}\|_2^2-\frac{1}{2\bar{p}}
\int_{\mathbb{R}^N}(I_\alpha\ast|\hat{u}_{\epsilon,t}|^{\bar{p}})|\hat{u}_{\epsilon,t}|^{\bar{p}}
-\frac{\mu}{q}\left(a^{-\frac{1}{2}}\|\hat{u}_{\epsilon,t}\|_2\right)^{q\gamma_q-q}\|\hat{u}_{\epsilon,t}\|_q^q,
\end{split}
\end{equation}
we have  $E(\bar{u}_{\epsilon,t})\to m_a$ as $t\to 0$, and
\begin{equation*}
\begin{split}
E(\bar{u}_{\epsilon,t})&\leq  t^2 \|\nabla
u_{\epsilon}\|_2^2-\frac{1}{2\bar{p}}
t^{2\bar{p}}\int_{\mathbb{R}^N}(I_\alpha\ast|u_{\epsilon}|^{\bar{p}})|u_{\epsilon}|^{\bar{p}}
\to -\infty
\end{split}
\end{equation*}
as $t\to +\infty$  uniformly for $\epsilon>0$ sufficiently small.
Hence, there exists $t_0>0$ large enough and $\epsilon_0>0$ small
enough such that
\begin{equation*}
E(\bar{u}_{\epsilon,t})<m_a+\frac{2+\alpha}{2(N+\alpha)}S_\alpha^{\frac{N+\alpha}{2+\alpha}}
\end{equation*}
for $t<\frac{1}{t_0}$ and $t>t_0$ uniformly for
$0<\epsilon<\epsilon_0$.

Next we  estimate  $E(\bar{u}_{\epsilon,t})$ for
$\frac{1}{t_0}<t<t_0$. By using the inequalities
\begin{equation*}
(a+b)^r\geq a^r+ra^{r-1}b+b^r,\ a>0,b>0, r\geq 2,
\end{equation*}
and
\begin{equation*}
(a+b)^r\geq a^r+ra^{r-1}b+rab^{r-1}+b^r,\ a>0,b>0, r\geq 3,
\end{equation*}
we obtain that
\begin{equation}\label{e7.8}
\|\nabla \hat{u}_{\epsilon,t}\|_2^2=\|\nabla
u_+\|_2^2+2t\int_{\mathbb{R}^N}\nabla u_+\nabla u_{\epsilon}+
\|\nabla (tu_{\epsilon})\|_2^2,
\end{equation}
\begin{equation}\label{e7.9}
\|\hat{u}_{\epsilon,t}\|_q^q\geq \|u_+\|_q^q+\|tu_{\epsilon}\|_q^q
+qt\int_{\mathbb{R}^N} |u_+|^{q-1}u_{\epsilon},
\end{equation}
\begin{equation*}
\|\hat{u}_{\epsilon,t}\|_2^2=\|u_+\|_2^2+2t\int_{\mathbb{R}^N} u_+
u_{\epsilon}+ \|tu_{\epsilon}\|_2^2,
\end{equation*}
\begin{equation}\label{e*}
\left(a^{-\frac{1}{2}}\|\hat{u}_{\epsilon,t}\|_2\right)^2
=1+\frac{2t}{a}\int_{\mathbb{R}^N} u_+
u_{\epsilon}+\frac{t^2}{a}\|u_{\epsilon}\|_2^2,
\end{equation}
\begin{equation}\label{e7.10}
\begin{split}
\int_{\mathbb{R}^N}&(I_\alpha\ast|\hat{u}_{\epsilon,t}|^{\bar{p}})|\hat{u}_{\epsilon,t}|^{\bar{p}}\\
&\geq
\int_{\mathbb{R}^N}(I_\alpha\ast|u_+|^{\bar{p}})|u_+|^{\bar{p}}
+2\bar{p}t\int_{\mathbb{R}^N}(I_\alpha\ast|u_+|^{\bar{p}})|u_+|^{\bar{p}-1}u_{\epsilon}\\
&\qquad
+2\bar{p}\int_{\mathbb{R}^N}(I_\alpha\ast|tu_{\epsilon}|^{\bar{p}})|tu_{\epsilon}|^{\bar{p}-1}u_{+}
+\int_{\mathbb{R}^N}(I_\alpha\ast|tu_{\epsilon}|^{\bar{p}})|tu_{\epsilon}|^{\bar{p}}
\end{split}
\end{equation}
for $N=3$, and
\begin{equation}\label{e7.11}
\begin{split}
\int_{\mathbb{R}^N}(I_\alpha\ast|\hat{u}_{\epsilon,t}|^{\bar{p}})|\hat{u}_{\epsilon,t}|^{\bar{p}}&\geq
\int_{\mathbb{R}^N}(I_\alpha\ast|u_+|^{\bar{p}})|u_+|^{\bar{p}} +
\int_{\mathbb{R}^N}(I_\alpha\ast|tu_{\epsilon}|^{\bar{p}})|tu_{\epsilon}|^{\bar{p}}\\
&\qquad+2\bar{p}t\int_{\mathbb{R}^N}(I_\alpha\ast|u_+|^{\bar{p}})|u_+|^{\bar{p}-1}u_{\epsilon}
\end{split}
\end{equation}
for $N\geq 4$, and  $N\geq 5$, $\bar{p}\geq 2$.

By the positivity of $u_+$, we have
\begin{equation}\label{e7.6}
\begin{split}
\int_{\mathbb{R}^N}u_+u_{\epsilon}
&\thickapprox \int_{B_1}\varphi(x)U_{\epsilon}(x)\\
&\thickapprox\epsilon^{\frac{N+2}{2}}\int_0^{\frac{1}{\epsilon}}\frac{1}{(1+r^2)^{\frac{N-2}{2}}}r^{N-1}dr\\
&\thickapprox
\epsilon^{\frac{N+2}{2}}\left(\frac{1}{\epsilon}\right)^2\thickapprox
\epsilon^{\frac{N-2}{2}}.
\end{split}
\end{equation}
By (\ref{e7.3}), (\ref{e*}), (\ref{e7.6}), and the inequality
$(1+t)^a\geq 1+at$ for $t\geq 0$ and $a<0$, we obtain that
\begin{equation}\label{e7.12}
\begin{split}
\left(a^{-\frac{1}{2}}\|\hat{u}_{\epsilon,t}\|_2\right)^{q\gamma_q-q}
&=\left(1+\frac{2t}{a}\int_{\mathbb{R}^N} u_+
u_{\epsilon}+\frac{t^2}{a}\|u_{\epsilon}\|_2^2\right)^{\frac{q\gamma_q-q}{2}}\\
&\geq 1+\frac{q\gamma_q-q}{2}\left(\frac{2t}{a}\int_{\mathbb{R}^N}
u_+ u_{\epsilon}+\frac{t^2}{a}\|u_{\epsilon}\|_2^2\right).
\end{split}
\end{equation}

Case $N=3$. Noting that $u_+$ satisfies the equation
\begin{equation*}
-\Delta u_+=\lambda
u_++(I_\alpha\ast|u_+|^{\bar{p}})|u_+|^{\bar{p}-2}u_++\mu|u_+|^{q-2}u_+
\end{equation*}
with $\lambda<0$ and $\lambda a=\mu(\gamma_q-1)\|u_+\|_q^q$ (see
(\ref{e***})), and by using (\ref{e7.7}), (\ref{e7.8}),
(\ref{e7.9}), (\ref{e7.10}), (\ref{e7.12}), we obtain that
\begin{equation}\label{e7.15}
\begin{split}
E(\bar{u}_{\epsilon,t})&\leq \frac{1}{2}\|\nabla
u_+\|_2^2+t\int_{\mathbb{R}^N}\nabla u_+\nabla
u_{\epsilon}+\frac{1}{2} \|\nabla (tu_{\epsilon})\|_2^2\\
&\quad
-\frac{1}{2\bar{p}}\int_{\mathbb{R}^N}(I_\alpha\ast|u_+|^{\bar{p}})|u_+|^{\bar{p}}
-t\int_{\mathbb{R}^N}(I_\alpha\ast|u_+|^{\bar{p}})|u_+|^{\bar{p}-1}u_{\epsilon}\\
&\quad
-\int_{\mathbb{R}^N}(I_\alpha\ast|tu_{\epsilon}|^{\bar{p}})|tu_{\epsilon}|^{\bar{p}-1}u_{+}
-\frac{1}{2\bar{p}}\int_{\mathbb{R}^N}(I_\alpha\ast|tu_{\epsilon}|^{\bar{p}})|tu_{\epsilon}|^{\bar{p}}\\
&\quad -\frac{\mu}{q}\|u_+\|_q^q-\frac{\mu}{q}\|tu_{\epsilon}\|_q^q
-\mu t\int_{\mathbb{R}^N} |u_+|^{q-1}u_{\epsilon}\\
&\quad
-\frac{\mu(\gamma_q-1)}{2}\left(\frac{2t}{a}\int_{\mathbb{R}^N} u_+
u_{\epsilon}+\frac{t^2}{a}\|u_{\epsilon}\|_2^2\right)\|\hat{u}_{\epsilon,t}\|_q^q\\
&=E(u_+)+E(tu_{\epsilon})+t\lambda\int_{\mathbb{R}^N}u_+u_{\epsilon}-\frac{\mu
t(\gamma_q-1)}{a}\|\hat{u}_{\epsilon,t}\|_q^q\int_{\mathbb{R}^N} u_+ u_{\epsilon}\\
&\quad-\frac{\mu(\gamma_q-1)t^2}{2a}\|u_{\epsilon}\|_2^2\|\hat{u}_{\epsilon,t}\|_q^q
-\int_{\mathbb{R}^N}(I_\alpha\ast|tu_{\epsilon}|^{\bar{p}})|tu_{\epsilon}|^{\bar{p}-1}u_{+}\\
&=m_a+E(tu_{\epsilon})+\frac{\mu
t(1-\gamma_q)}{a}(\|\hat{u}_{\epsilon,t}\|_q^q-\|u_{+}\|_q^q)\int_{\mathbb{R}^N} u_+ u_{\epsilon}\\
&\quad+\frac{\mu(1-\gamma_q)t^2}{2a}\|u_{\epsilon}\|_2^2\|\hat{u}_{\epsilon,t}\|_q^q
-\int_{\mathbb{R}^N}(I_\alpha\ast|tu_{\epsilon}|^{\bar{p}})|tu_{\epsilon}|^{\bar{p}-1}u_{+}.
\end{split}
\end{equation}
By direct calculation, we have
\begin{equation}\label{e7.14}
\begin{split}
\int_{\mathbb{R}^N}&(I_\alpha\ast|u_{\epsilon}|^{\bar{p}})|u_{\epsilon}|^{\bar{p}-1}u_{+}\\
&\gtrsim\int_{\mathbb{R}^N}(I_\alpha\ast|u_{\epsilon}|^{\bar{p}})|u_{\epsilon}|^{\bar{p}-1}\\
&\gtrsim
\int_{B_1}\int_{B_1}\frac{|U_{\epsilon}(x)|^{\bar{p}}|U_{\epsilon}(y)|^{\bar{p}-1}}{|x-y|^{N-\alpha}}dxdy\\
&=\epsilon^{\frac{N-2}{2}}\int_{B_{\frac{1}{\epsilon}}}\int_{B_{\frac{1}{\epsilon}}}
\frac{1}{(1+|x|^2)^{\frac{N-2}{2}\bar{p}}|x-y|^{N-\alpha}(1+|y|^2)^{\frac{N-2}{2}(\bar{p}-1)}}dxdy\\
&\gtrsim \epsilon^{\frac{N-2}{2}},
\end{split}
\end{equation}
\begin{equation}\label{e7.16}
\begin{split}
\|\hat{u}_{\epsilon,t}\|_q^q-\|u_{+}\|_q^q=\|u_++tu_{\epsilon}\|_q^q-\|u_{+}\|_q^q\lesssim
\int_{\mathbb{R}^N}|u_+|^{q-1}tu_{\epsilon}+\|tu_{\epsilon}\|_q^q,
\end{split}
\end{equation}
and similarly to (\ref{e7.6}),
\begin{equation}\label{e7.13}
\begin{split}
\int_{\mathbb{R}^N}u_+^{q-1}u_{\epsilon} \lesssim
\int_{B_2}U_{\epsilon}(x) \lesssim \epsilon^{\frac{N-2}{2}}.
\end{split}
\end{equation}
By (\ref{e7.2}), (\ref{e7.3}), (\ref{e7.5}), (\ref{e7.6}),
(\ref{e7.15}), (\ref{e7.14}), (\ref{e7.16}), (\ref{e7.13}),   we
obtain
\begin{equation*}
\begin{split}
E(\bar{u}_{\epsilon,t})&\leq
m_a+\frac{t^2}{2}\left(S^{\frac{N}{2}}+O(\epsilon^{N-2})\right)
-\frac{t^{2\bar{p}}}{2\bar{p}}\left((A_\alpha(N)
C_\alpha(N))^{\frac{N}2}S_\alpha^{\frac{N+\alpha}2}
+O(\epsilon^{\frac{N+\alpha}{2}})\right)\\
&\qquad -\frac{\mu}{q}t^q\|u_{\epsilon}\|_q^q+O(\epsilon^{N-2})
+O(\epsilon^{\frac{N-2}{2}})\|u_{\epsilon}\|_q^q+O(\|u_{\epsilon}\|_2^2)-C\epsilon^{\frac{N-2}{2}}\\
&<
m_a+\frac{t^2}{2}S^{\frac{N}{2}}-\frac{t^{2\bar{p}}}{2\bar{p}}(A_\alpha(N)
C_\alpha(N))^{\frac{N}2}S_\alpha^{\frac{N+\alpha}2}\\
&\leq
m_a+\frac{2+\alpha}{2(N+\alpha)}S_\alpha^{\frac{N+\alpha}{2+\alpha}}
\end{split}
\end{equation*}
for $\frac{1}{t_0}<t<t_0$ uniformly for $\epsilon\in (0,\epsilon_0)$
small enough.

Case $N\geq 4$. Similarly to case $N=3$, by using (\ref{e7.7}),
(\ref{e7.8}), (\ref{e7.9}), (\ref{e7.11}), (\ref{e7.12}), we have
\begin{equation}\label{e7.17}
\begin{split}
E(\bar{u}_{\epsilon,t})&\leq m_a+E(tu_{\epsilon})+\frac{\mu
t(1-\gamma_q)}{a}(\|\hat{u}_{\epsilon,t}\|_q^q-\|u_{+}\|_q^q)\int_{\mathbb{R}^N} u_+ u_{\epsilon}\\
&\quad+\frac{\mu(1-\gamma_q)t^2}{2a}\|u_{\epsilon}\|_2^2\|\hat{u}_{\epsilon,t}\|_q^q.
\end{split}
\end{equation}
Thus, by using (\ref{e7.2}), (\ref{e7.3}), (\ref{e7.4}),
(\ref{e7.5}), (\ref{e7.6}), (\ref{e7.16}),  (\ref{e7.13}) and
(\ref{e7.17}),
 we obtain
\begin{equation}\label{e9.1}
\begin{split}
E(\bar{u}_{\epsilon,t})&\leq
m_a+\frac{t^2}{2}\left(S^{\frac{N}{2}}+O(\epsilon^{N-2})\right)
-\frac{t^{2\bar{p}}}{2\bar{p}}\left((A_\alpha(N)
C_\alpha(N))^{\frac{N}2}S_\alpha^{\frac{N+\alpha}2}
+O(\epsilon^{\frac{N+\alpha}{2}})\right)\\
&\qquad -\frac{\mu}{q}t^q\|u_{\epsilon}\|_q^q+O(\epsilon^{N-2})
+O(\epsilon^{\frac{N-2}{2}})\|u_{\epsilon}\|_q^q+O(\|u_{\epsilon}\|_2^2)\\
&<
m_a+\frac{t^2}{2}S^{\frac{N}{2}}-\frac{t^{2\bar{p}}}{2\bar{p}}(A_\alpha(N)
C_\alpha(N))^{\frac{N}2}S_\alpha^{\frac{N+\alpha}2}\\
&\leq
m_a+\frac{2+\alpha}{2(N+\alpha)}S_\alpha^{\frac{N+\alpha}{2+\alpha}}
\end{split}
\end{equation}
for $\frac{1}{t_0}<t<t_0$ uniformly for $\epsilon\in (0,\epsilon_0)$
small enough. The proof is complete.
\end{proof}

\begin{lemma}\label{lem7.3}
Let $N\geq 4$, $\alpha\in (0,N)$, $p=\bar{p}$, $q\in
(2,2+\frac{4}{N})$, $\mu>0$, $a>0$ and $\mu
a^{\frac{q(1-\gamma_q)}{2}}\leq
(2K)^{\frac{q\gamma_q-2\bar{p}}{2(\bar{p}-1)}}$. Then
\begin{equation*}
\inf_{u\in
\mathcal{P}_{a,-}}E(u)<m_a+\frac{2+\alpha}{2(N+\alpha)}S_\alpha^{\frac{N+\alpha}{2+\alpha}}.
\end{equation*}
\end{lemma}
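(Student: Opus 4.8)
The plan is to adapt the test-function argument of Lemma \ref{lem7.1}, replacing the bubble concentrated at the origin by one concentrated at a point that escapes to infinity; this bypasses the pointwise inequality that forced $\bar p\ge2$ there. Let $u_+$ be the positive, radially symmetric and non-increasing ground state from Theorem \ref{thm6.1}, which solves (\ref{e1.5}) with some $\lambda<0$ and $\lambda a=\mu(\gamma_q-1)\|u_+\|_q^q$ (see (\ref{e***})). For $\epsilon>0$ and points $y_\epsilon\in\mathbb R^N$ with $|y_\epsilon|\to\infty$ as $\epsilon\to0^+$ (along a polynomial rate to be fixed), set $u_\epsilon(x):=\varphi(x-y_\epsilon)U_\epsilon(x-y_\epsilon)$ with $\varphi,U_\epsilon$ as in (\ref{e8.10}); being a translate of the bubble of Lemma \ref{lem7.1}, $u_\epsilon$ still obeys (\ref{e7.2})--(\ref{e7.5}). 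Put $\hat u_{\epsilon,t}:=u_++tu_\epsilon$ and let $\bar u_{\epsilon,t}\in S_a$ be its $L^2$-rescaling, as in (\ref{e8.11}). As in Lemma \ref{lem7.1}, Lemma \ref{lem6.6} gives for each $t\ge0$ a unique $\tau_{\epsilon,t}^-$ with $(\bar u_{\epsilon,t})_{\tau_{\epsilon,t}^-}\in\mathcal P_{a,-}$; since $\tau_{\epsilon,0}^->1$, $\tau_{\epsilon,t}^-\to0$ as $t\to\infty$, and $t\mapsto\tau_{\epsilon,t}^-$ is continuous, some $t_\epsilon$ gives $\bar u_{\epsilon,t_\epsilon}\in\mathcal P_{a,-}$, so that $\inf_{\mathcal P_{a,-}}E\le\sup_{t\ge0}E(\bar u_{\epsilon,t})$. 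Since $E(\bar u_{\epsilon,t})\to m_a$ when $t\to0$ and $E(\bar u_{\epsilon,t})\to-\infty$ when $t\to\infty$, uniformly for small $\epsilon$, it suffices to bound $E(\bar u_{\epsilon,t})$ for $t$ in a fixed compact interval $[1/t_0,t_0]$.

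For such $t$, I would expand (\ref{e7.7}): the gradient and $L^2$ parts expand exactly, the $L^q$ part via $(a+b)^q\ge a^q+b^q+qa^{q-1}b$ (valid since $q>2$) and the normalization factor via $(1+s)^{(q\gamma_q-q)/2}\ge1+\tfrac{q\gamma_q-q}{2}s$ as in (\ref{e7.12}), and the nonlocal part via only the elementary bound $(a+b)^{\bar p}\ge a^{\bar p}+b^{\bar p}$ $(a,b\ge0)$, which holds for all $\bar p>1$, together with positivity of $I_\alpha\ast(\cdot)$, giving
\[
\int_{\mathbb R^N}(I_\alpha\ast|\hat u_{\epsilon,t}|^{\bar p})|\hat u_{\epsilon,t}|^{\bar p}\ \ge\ \int_{\mathbb R^N}(I_\alpha\ast|u_+|^{\bar p})|u_+|^{\bar p}+t^{2\bar p}\int_{\mathbb R^N}(I_\alpha\ast|u_\epsilon|^{\bar p})|u_\epsilon|^{\bar p}.
\]
This is precisely where the restriction $\bar p\ge2$ is dropped: no first-order term is produced here, and none will be needed. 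The only first-order cross term that then survives, $t\int_{\mathbb R^N}\nabla u_+\cdot\nabla u_\epsilon$, I rewrite by testing the equation (\ref{e1.5}) for $u_+$ against $u_\epsilon$, obtaining $t\lambda\int u_+u_\epsilon+t\int(I_\alpha\ast|u_+|^{\bar p})|u_+|^{\bar p-1}u_\epsilon+\mu t\int|u_+|^{q-1}u_\epsilon$, whose first term is negative. Collecting terms (using $\lambda a=\mu(\gamma_q-1)\|u_+\|_q^q$ and $E(u_+)=m_a$),
\[
E(\bar u_{\epsilon,t})\ \le\ m_a+\Big(\tfrac{t^2}{2}\|\nabla u_\epsilon\|_2^2-\tfrac{t^{2\bar p}}{2\bar p}\int_{\mathbb R^N}(I_\alpha\ast|u_\epsilon|^{\bar p})|u_\epsilon|^{\bar p}-\tfrac{\mu}{q}t^q\|u_\epsilon\|_q^q\Big)+R_\epsilon(t),
\]
where $R_\epsilon(t)$ collects the cross terms $\int u_+u_\epsilon$, $\int|u_+|^{q-1}u_\epsilon$, $\int(I_\alpha\ast|u_+|^{\bar p})|u_+|^{\bar p-1}u_\epsilon$ and $\int(I_\alpha\ast|u_+|^{\bar p})|u_\epsilon|^{\bar p}$, with coefficients bounded uniformly for $t\in[1/t_0,t_0]$.

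The heart of the matter is to make $R_\epsilon(t)$ negligible by pushing $y_\epsilon$ to infinity; this is where the radial monotonicity of $u_+$ is used. Since $u_+$ is radially non-increasing and in $L^2$, Lemma \ref{lem jx} gives $u_+(x)\lesssim|x|^{-N/2}$, hence $u_+\lesssim|y_\epsilon|^{-N/2}$ on $\mathrm{supp}\,u_\epsilon\subset B_2(y_\epsilon)$; together with $\int u_\epsilon\approx\epsilon^{(N-2)/2}$ this bounds the two local cross terms by $|y_\epsilon|^{-c}\epsilon^{(N-2)/2}$ with $c>0$. For the nonlocal cross terms I would split the Riesz integral according to whether the variable carrying $|u_+|^{\bar p}$ is near the origin or near $y_\epsilon$ and apply the Hardy--Littlewood--Sobolev inequality (Lemma \ref{lem HLS}) on each piece with the above decay of $u_+$; this careful computation gives $(I_\alpha\ast|u_+|^{\bar p})(x)\lesssim|y_\epsilon|^{\alpha-N\bar p/2}$ on $B_2(y_\epsilon)$ and bounds $\int(I_\alpha\ast|u_+|^{\bar p})|u_+|^{\bar p-1}u_\epsilon$ and $\int(I_\alpha\ast|u_+|^{\bar p})|u_\epsilon|^{\bar p}$ by a negative power of $|y_\epsilon|$ times $\epsilon^{(N-2)/2}$, resp.\ $\epsilon^{(N-\alpha)/2}$. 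Prescribing the growth rate of $|y_\epsilon|$ appropriately, one obtains $R_\epsilon(t)=o(\|u_\epsilon\|_q^q)=o(\epsilon^{\theta})$ uniformly for $t\in[1/t_0,t_0]$, where $\theta:=N-\tfrac{(N-2)q}{2}$ and $\|u_\epsilon\|_q^q\thickapprox\epsilon^\theta$ by (\ref{e7.4}) (note $(N-2)q>N$ since $q>2$, $N\ge4$).

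It then remains to maximize in $t$. By (\ref{e7.2}) and (\ref{e7.5}),
\[
\sup_{t\ge0}\Big(\tfrac{t^2}{2}\|\nabla u_\epsilon\|_2^2-\tfrac{t^{2\bar p}}{2\bar p}\int_{\mathbb R^N}(I_\alpha\ast|u_\epsilon|^{\bar p})|u_\epsilon|^{\bar p}\Big)\ \le\ \tfrac{2+\alpha}{2(N+\alpha)}S_\alpha^{\frac{N+\alpha}{2+\alpha}}+O\big(\epsilon^{\min\{N-2,\,(N+\alpha)/2\}}\big),
\]
while the perturbation contributes the strictly negative term $-\tfrac{\mu}{q}t^q\|u_\epsilon\|_q^q\thickapprox-\epsilon^\theta$. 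An elementary check using $q>2$, $N\ge4$, $\alpha\in(0,N)$ shows $\theta<N-2$ and $\theta<(N+\alpha)/2$, so $\epsilon^\theta$ dominates both the error above and $R_\epsilon(t)$. Hence for $\epsilon$ small enough and all $t\in[1/t_0,t_0]$ (the ranges $t<1/t_0$ and $t>t_0$ being trivial), $E(\bar u_{\epsilon,t})<m_a+\tfrac{2+\alpha}{2(N+\alpha)}S_\alpha^{(N+\alpha)/(2+\alpha)}$, and therefore
\[
\inf_{u\in\mathcal P_{a,-}}E(u)\ <\ m_a+\frac{2+\alpha}{2(N+\alpha)}S_\alpha^{\frac{N+\alpha}{2+\alpha}}.
\]
I expect the main obstacle to be the third step: because the Riesz kernel is long-range, merely translating the bubble does not decouple $u_+$ from $u_\epsilon$ in the nonlocal interaction, so one needs both the polynomial decay of $u_+$ from its radial monotonicity (Lemma \ref{lem jx}) and a careful Hardy--Littlewood--Sobolev splitting to control $\int(I_\alpha\ast|u_+|^{\bar p})|u_+|^{\bar p-1}u_\epsilon$ --- the term that in Lemma \ref{lem7.1} was cancelled via a pointwise inequality requiring $\bar p\ge2$ --- and then a rate $|y_\epsilon|\to\infty$ making all such contributions smaller than the polynomially small gain $\epsilon^\theta$.
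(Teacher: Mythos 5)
Your proposal is correct and follows essentially the same route as the paper's proof: separating the two profiles by a translation $y_\epsilon\to\infty$, using $(a+b)^{\bar p}\ge a^{\bar p}+b^{\bar p}$ on the nonlocal term to drop the $\bar p\ge2$ restriction, converting the gradient cross term via the equation for $u_+$, and controlling the resulting interaction terms through the radial decay of $u_+$ (Lemma \ref{lem jx}) together with a three-region splitting of the Riesz potential. The only difference is cosmetic --- you translate the bubble $u_\epsilon$ while the paper translates $u_+$ (see (\ref{e8.11}) in Step 2 of its proof) --- which is equivalent under the global change of variables $x\mapsto x+y_\epsilon$.
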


\begin{proof}
\textbf{Step 1}. Let $u_{+}$ and $u_{\epsilon}$ be defined  in Lemma
\ref{lem7.1}. We claim that for any $\epsilon>0$, there exists
$y_\epsilon\in \mathbb{R}^N$ such that
\begin{equation}\label{e8.1}
\int_{\mathbb{R}^N}u_{+}(x-y_{\epsilon})u_{\epsilon}(x)dx\leq
\|u_{\epsilon}\|_2^2
\end{equation}
and
\begin{equation}\label{e8.9}
\int_{\mathbb{R}^N}\nabla u_{+}(x-y_{\epsilon})\cdot \nabla
u_{\epsilon}(x)dx\leq \|u_{\epsilon}\|_2^2.
\end{equation}
Indeed, since $u_{+}$ is radial and  non-increasing, by  Lemma
\ref{lem jx}, we obtain that
\begin{equation*}
\begin{split}
\int_{\mathbb{R}^N}u_{+}(x-y)u_{\epsilon}(x)dx&\leq
\left(\frac{N}{|S^{N-1}|}\right)^{1/2}\sqrt{a}
\int_{\mathbb{R}^N}|x-y|^{-N/2}u_{\epsilon}(x)dx.
\end{split}
\end{equation*}
Noting that $supp(u_{\epsilon})\subset B_2$, and by using the
H\"{o}lder inequality,  we have,  for $|y|>10$,
\begin{equation*}
\begin{split}
\int_{\mathbb{R}^N}u_{+}(x-y)u_{\epsilon}(x)dx\lesssim
\int_{B_2}\left(\frac{|y|}{2}\right)^{-N/2}u_{\epsilon}(x)dx\lesssim
\left(\frac{|y|}{2}\right)^{-N/2}|B_2|^{1/2}\|u_{\epsilon}\|_2,
\end{split}
\end{equation*}
which combines with (\ref{e7.3}) imply that (\ref{e8.1}) holds for
$y_\epsilon$ large enough.

Noting that for any $y\in \mathbb{R}^N$, $u_+(x-y)$ is a solution to
the equation
\begin{equation*}
-\Delta u=\lambda u+(I_\alpha\ast|u|^{\bar{p}})|u|^{\bar{p}-2}u+\mu
|u|^{q-2}u
\end{equation*}
with some $\lambda<0$, we obtain that
\begin{equation}\label{e8.6}
\begin{split}
&\int_{\mathbb{R}^N}\nabla u_{+}(x-y)\cdot \nabla
u_{\epsilon}(x)dx\\
&\leq
\int_{\mathbb{R}^N}(I_\alpha\ast|u_{+}(x-y)|^{\bar{p}})|u_{+}(x-y)|^{\bar{p}-2}u_{+}(x-y)u_{\epsilon}(x)dx\\
&\qquad+\mu\int_{\mathbb{R}^N}|u_{+}(x-y)|^{q-2}u_{+}(x-y)u_{\epsilon}(x)dx.
\end{split}
\end{equation}
Similarly to the proof of (\ref{e8.1}), we have
\begin{equation}\label{e8.7}
\mu\int_{\mathbb{R}^N}|u_{+}(x-y)|^{q-2}u_{+}(x-y)u_{\epsilon}(x)dx\leq
\frac{1}{2}\|u_{\epsilon}\|_2^2
\end{equation}
for $y$ large enough. Now, for $x\in B_2$ and $|y|>100$, we
calculate
\begin{equation}\label{e8.2}
\begin{split}
&\int_{\mathbb{R}^N}\frac{1}{|x-z|^{N-\alpha}}|u_{+}(z-y)|^{\bar{p}}dz\\
&=\left(\int_{\mathbb{R}^N\backslash
B_{4|y|}(y)}+\int_{B_{4|y|}(y)\backslash
B_{\frac{|y|}{2}}(y)}+\int_{B_{\frac{|y|}{2}}(y)}\right)\frac{1}{|x-z|^{N-\alpha}}|u_{+}(z-y)|^{\bar{p}}dz\\
&:=I_1+I_2+I_3.
\end{split}
\end{equation}

It follows from $x\in B_2$, $|y|>100$ and $z\in
\mathbb{R}^N\backslash B_{4|y|}(y)$ that
\begin{equation*}
|x-z|\geq |z|-|x|\geq \frac{1}{2}|z|\geq \frac{1}{2}|y|
\end{equation*}
and
\begin{equation*}
|x-z|\geq \frac{1}{2}|z|\geq \frac{1}{4}(|z|+|y|)\geq
\frac{1}{4}|z-y|.
\end{equation*}
By using Lemma \ref{lem jx} with $t=2$, for any $\delta\in
(0,\min\{N-\alpha,N\bar{p}/2-\alpha\})$,  we have
\begin{equation}\label{e8.3}
\begin{split}
I_1&\lesssim\int_{\mathbb{R}^N\backslash
B_{4|y|}(y)}\frac{1}{|y|^{\delta}|z-y|^{N-\alpha-\delta}}|z-y|^{-\frac{N\bar{p}}{2}}\|u_+(z-y)\|_2^{\bar{p}}dz\\
&\lesssim\frac{1}{|y|^{\delta}}\int_{4|y|}^{+\infty}\frac{1}{r^{N-\alpha-\delta}}r^{-\frac{N\bar{p}}{2}}r^{N-1}dr
\lesssim \frac{1}{|y|^{\delta}}.
\end{split}
\end{equation}

For $x\in B_2$, $|y|>100$ and $z\in B_{4|y|}(y)\backslash
B_{\frac{|y|}{2}}(y)$, we obtain that $|z-x|\leq 8|y|$ and
$|z-y|\geq \frac{|y|}{2}$.  By using Lemma \ref{lem jx} with $t=2$,
we have
\begin{equation}\label{e8.4}
\begin{split}
I_2&\lesssim \int_{B_{4|y|}(y)\backslash
B_{\frac{|y|}{2}}(y)}\frac{1}{|x-z|^{N-\alpha}}\left(\frac{|y|}{2}\right)^{-\frac{N\bar{p}}{2}}\|u_+(z-y)\|_2^{\bar{p}}dz\\
&\lesssim
|y|^{-\frac{N\bar{p}}{2}}\int_{0}^{8|y|}\frac{1}{r^{N-\alpha}}r^{N-1}dr
\lesssim|y|^{\alpha-\frac{N\bar{p}}{2}}.
\end{split}
\end{equation}

For $x\in B_2$, $|y|>100$ and $z\in B_{\frac{|y|}{2}}(y)$, we obtain
that $|z-x|\geq \frac{|y|}{3}$ and  $|z-y|\leq \frac{|y|}{2}$. Then
by using Lemma \ref{lem jx} with $t=2^*$ and $\|u_+\|_{2^*}\leq C$,
we obtain that
\begin{equation}\label{e8.5}
\begin{split}
I_3&\lesssim
\int_{B_{\frac{|y|}{2}}(y)}\frac{1}{|x-z|^{N-\alpha}}|z-y|^{-N\bar{p}/{2^*}}dz\\
&\lesssim
\int_{B_{\frac{|y|}{2}}(0)}\frac{1}{\left(\frac{|y|}{3}\right)^{N-\alpha}}|z|^{-N\bar{p}/{2^*}}dz\\
&\lesssim
\frac{1}{|y|^{N-\alpha}}\int_{0}^{\frac{|y|}{2}}r^{-N\bar{p}/{2^*}}
r^{N-1}dr\lesssim |y|^{\frac{\alpha-N}{2}}.
\end{split}
\end{equation}

By using (\ref{e8.2}), (\ref{e8.3}), (\ref{e8.4}) and (\ref{e8.5}),
similarly to the proof of (\ref{e8.1}), we obtain
\begin{equation}\label{e8.8}
\begin{split}
\int_{\mathbb{R}^N}(I_\alpha\ast|u_{+}(x-y)|^{\bar{p}})|u_{+}(x-y)|^{\bar{p}-2}u_{+}(x-y)u_{\epsilon}(x)dx\leq
\frac{1}{2}\|u_{\epsilon}\|_2^2
\end{split}
\end{equation}
for $y$ large enough. In view of  (\ref{e8.6}), (\ref{e8.7}) and
(\ref{e8.8}), we complete the proof of (\ref{e8.9}).

\textbf{Step 2}. Let $y_\epsilon$ be given in Step 1 such that
(\ref{e8.1}) and (\ref{e8.9}) hold. As in  (\ref{e8.11}), we define
\begin{equation*}
\hat{u}_{\epsilon,t}=u_+(x-y_\epsilon)+tu_{\epsilon}(x)\
\mathrm{and}\
\bar{u}_{\epsilon,t}=\left(a^{-\frac{1}{2}}\|\hat{u}_{\epsilon,t}\|_2\right)^{\frac{N-2}{2}}
\hat{u}_{\epsilon,t}\left(a^{-\frac{1}{2}}\|\hat{u}_{\epsilon,t}\|_2x\right).
\end{equation*}
Similarly to the proof of  Lemma \ref{lem7.1}, there exists $t_0>0$
large enough and $\epsilon_0>0$ small enough such that
\begin{equation}\label{e8.12}
E(\bar{u}_{\epsilon,t})<m_a+\frac{2+\alpha}{2(N+\alpha)}S_\alpha^{\frac{N+\alpha}{2+\alpha}}
\end{equation}
for $t<\frac{1}{t_0}$ and $t>t_0$ uniformly for
$0<\epsilon<\epsilon_0$.

By using the inequality
\begin{equation*}
(a+b)^r\geq a^r+b^r,\ a>0,b>0, r\geq 1,
\end{equation*}
(\ref{e7.2}), (\ref{e7.3}), (\ref{e7.4}), (\ref{e7.5}), (\ref{e8.1})
and (\ref{e8.9}), similarly to (\ref{e9.1}), we obtain
\begin{equation}\label{e8.13}
\begin{split}
E(\bar{u}_{\epsilon,t})&\leq \frac{1}{2}\|\nabla
u_+\|_2^2+t\int_{\mathbb{R}^N}\nabla u_+(x-y_\epsilon)\nabla
u_{\epsilon}(x)+\frac{1}{2} \|\nabla (tu_{\epsilon})\|_2^2\\
&\quad
-\frac{1}{2\bar{p}}\int_{\mathbb{R}^N}(I_\alpha\ast|u_+|^{\bar{p}})|u_+|^{\bar{p}}
-\frac{1}{2\bar{p}}\int_{\mathbb{R}^N}(I_\alpha\ast|tu_{\epsilon}|^{\bar{p}})|tu_{\epsilon}|^{\bar{p}}\\
&\quad -\frac{\mu}{q}\|u_+\|_q^q-\frac{\mu}{q}\|tu_{\epsilon}\|_q^q\\
&\quad
-\frac{\mu(\gamma_q-1)}{2}\left(\frac{2t}{a}\int_{\mathbb{R}^N}
u_+(x-y_\epsilon)
u_{\epsilon}(x)+\frac{t^2}{a}\|u_{\epsilon}\|_2^2\right)\|\hat{u}_{\epsilon,t}\|_q^q\\
&\leq E(u_+)+E(tu_{\epsilon})+O(\|u_\epsilon\|_2^2)+O(\|u_\epsilon\|_2^2)\|\hat{u}_{\epsilon,t}\|_q^q\\
&<m_a+\frac{2+\alpha}{2(N+\alpha)}S_\alpha^{\frac{N+\alpha}{2+\alpha}}
\end{split}
\end{equation}
for $\frac{1}{t_0}<t<t_0$ uniformly for $\epsilon\in (0,\epsilon_0)$
small enough. In view of  (\ref{e8.12}) and (\ref{e8.13}), we
complete the proof.
\end{proof}

The next lemma is about the convergence of the Palais-Smale
sequence.

\begin{lemma}\label{pro1.4}
Assume  $N\geq 3$, $\alpha\in (0,N)$, $p=\bar{p}$, $q\in
(2,2+\frac{4}{N})$,  $\mu>0$, $a>0$ and $\mu
a^{\frac{q(1-\gamma_q)}{2}}\leq
(2K)^{\frac{q\gamma_q-2\bar{p}}{2(\bar{p}-1)}}$. Let $\{u_n\}\subset
S_{a,r}$ be a Palais-Smale sequence for $E|_{S_a}$ at level $c$,
with $P(u_n)\to 0$ as $n\to \infty$. If
\begin{equation*}
0<c<m_a+\frac{2+\alpha}{2(N+\alpha)}S_\alpha^{\frac{N+\alpha}{2+\alpha}},
\end{equation*}
then up a subsequence, $u_n\to u$ strongly in $H^1(\mathbb{R}^N)$,
and $u$ is a radial solution to (\ref{e1.5}) with $E(u)=c$ and some
$\lambda<0$.
\end{lemma}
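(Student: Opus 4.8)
The plan is a concentration–compactness argument for the radial Palais–Smale sequence $\{u_n\}$, in which the mass constraint $\|u_n\|_2^2=a$, the asymptotic Pohozaev identity $P(u_n)\to 0$, and the strict upper bound on the level $c$ all enter.

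\textbf{Boundedness and extraction.} First I would combine the level with the Pohozaev information: since
\[
E(u_n)-\frac{1}{2\bar{p}}P(u_n)=\frac{\bar{p}-1}{2\bar{p}}\|\nabla u_n\|_2^2-\mu\Big(\frac1q-\frac{\gamma_q}{2\bar{p}}\Big)\|u_n\|_q^q=c+o(1),
\]
and $q\gamma_q<2$, $\frac1q-\frac{\gamma_q}{2\bar{p}}>0$ (both valid because $q<2+\frac4N$), an application of Lemma \ref{lem2.5} together with $\|u_n\|_2^2=a$ shows that $\{u_n\}$ is bounded in $H^1(\mathbb{R}^N)$. Passing to a subsequence, $u_n\rightharpoonup u$ in $H_r^1(\mathbb{R}^N)$, $u_n\to u$ in $L^q(\mathbb{R}^N)$ (compactness of the radial embedding) and a.e.\ in $\mathbb{R}^N$. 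Since $\{u_n\}$ is a Palais–Smale sequence for $E|_{S_a}$, there are Lagrange multipliers $\lambda_n\in\mathbb{R}$ with $E'(u_n)-\lambda_n u_n\to 0$ in $H^{-1}(\mathbb{R}^N)$; testing with $u_n$ and using $P(u_n)\to0$ gives $\lambda_n a=\langle E'(u_n),u_n\rangle=P(u_n)-\mu(1-\gamma_q)\|u_n\|_q^q\to-\mu(1-\gamma_q)\|u\|_q^q$, so $\lambda_n\to\lambda:=-\frac{\mu(1-\gamma_q)}{a}\|u\|_q^q\le0$.

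\textbf{Non-vanishing and the limit equation.} The crucial exclusion is $u\neq 0$. If $u=0$, then $\|u_n\|_q\to0$, hence $\lambda_n\to0$ and $E(u_n)=\frac{\bar{p}-1}{2\bar{p}}\|\nabla u_n\|_2^2+o(1)$, so $\|\nabla u_n\|_2^2\to\frac{2\bar{p}c}{\bar{p}-1}>0$ (here $c>0$ is used); moreover $P(u_n)\to0$ forces $\int_{\mathbb{R}^N}(I_\alpha\ast|u_n|^{\bar{p}})|u_n|^{\bar{p}}\to\frac{2\bar{p}c}{\bar{p}-1}$. Inserting these into (\ref{e3.14}) yields $\frac{2\bar{p}c}{\bar{p}-1}\le S_\alpha^{-\bar{p}}\big(\frac{2\bar{p}c}{\bar{p}-1}\big)^{\bar{p}}$, i.e.\ $c\ge\frac{\bar{p}-1}{2\bar{p}}S_\alpha^{\frac{N+\alpha}{2+\alpha}}=\frac{2+\alpha}{2(N+\alpha)}S_\alpha^{\frac{N+\alpha}{2+\alpha}}$, which is incompatible with $c<m_a+\frac{2+\alpha}{2(N+\alpha)}S_\alpha^{\frac{N+\alpha}{2+\alpha}}$ since $m_a<0$. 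Hence $u\neq0$ and $\lambda<0$. Passing to the limit in $E'(u_n)-\lambda_n u_n\to0$ — using Lemma \ref{lem33.3} for the nonlocal term and the $L^q$-convergence for the local one — shows that $u$ solves (\ref{e1.5}) with this $\lambda$, and Lemma \ref{lem3.8} then gives $P(u)=0$.

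\textbf{Strong convergence.} Set $v_n:=u_n-u\rightharpoonup0$. Using the Brezis–Lieb splittings for $\|\nabla\cdot\|_2^2$, $\|\cdot\|_2^2$ and — exactly as in Proposition \ref{pro6.1} — for the nonlocal term, together with $\|v_n\|_q\to0$, I would subtract $\langle E'(u)-\lambda u,u\rangle=0$ from $\langle E'(u_n),u_n\rangle-\lambda_n a=o(1)$ to obtain $\|\nabla v_n\|_2^2-\lambda\|v_n\|_2^2-\int_{\mathbb{R}^N}(I_\alpha\ast|v_n|^{\bar{p}})|v_n|^{\bar{p}}\to0$, and subtract $P(u)=0$ from $P(u_n)\to0$ to obtain $\|\nabla v_n\|_2^2-\int_{\mathbb{R}^N}(I_\alpha\ast|v_n|^{\bar{p}})|v_n|^{\bar{p}}\to0$. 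Comparing the two relations gives $\lambda\lim_n\|v_n\|_2^2=0$, so $\|v_n\|_2\to0$ (because $\lambda<0$), whence $u\in S_a$. Writing $\ell:=\lim_n\|\nabla v_n\|_2^2$ (along a further subsequence), the last relation together with (\ref{e3.14}) forces $\ell\le S_\alpha^{-\bar{p}}\ell^{\bar{p}}$, hence $\ell=0$ or $\ell\ge S_\alpha^{\frac{N+\alpha}{2+\alpha}}$. Since $\|u_n\|_q^q\to\|u\|_q^q$, $\|\nabla u_n\|_2^2\to\|\nabla u\|_2^2+\ell$ and $P(u_n)\to0=P(u)$, one computes $c=\lim_n E(u_n)=E(u)+\frac{\bar{p}-1}{2\bar{p}}\ell$, while $E(u)\ge c_a^{g}=m_a$ by Theorem \ref{thm6.1}(2) ($u\in S_a$ is a solution, hence a critical point of $E|_{S_a}$). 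If $\ell\ge S_\alpha^{\frac{N+\alpha}{2+\alpha}}$ this would give $c\ge m_a+\frac{2+\alpha}{2(N+\alpha)}S_\alpha^{\frac{N+\alpha}{2+\alpha}}$, contradicting the hypothesis; therefore $\ell=0$, i.e.\ $u_n\to u$ strongly in $H^1(\mathbb{R}^N)$, $E(u)=c$, and $u$ is a radial solution of (\ref{e1.5}) with $\lambda<0$.

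\textbf{Main obstacle.} The delicate point is precisely ruling out a loss of compactness caused by a nonlocal ``bubble'' modelled on the ground state of (\ref{e1.13}), which carries exactly the energy $\frac{2+\alpha}{2(N+\alpha)}S_\alpha^{\frac{N+\alpha}{2+\alpha}}$; this is what the strict bound $c<m_a+\frac{2+\alpha}{2(N+\alpha)}S_\alpha^{\frac{N+\alpha}{2+\alpha}}$ (furnished by Lemmas \ref{lem7.1} and \ref{lem7.3}) is designed to prevent, both in the non-vanishing step and in the splitting step. The remaining work is bookkeeping: keeping the $L^q$ term and the Lagrange multiplier consistent through both the energy and the $P$-functional splittings, and invoking the nonlocal Brezis–Lieb lemma and Lemma \ref{lem33.3} to justify every passage to the limit.
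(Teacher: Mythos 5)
Your proof is correct and follows essentially the same route as the paper: boundedness via $E-\tfrac{1}{2\bar{p}}P$ and Gagliardo--Nirenberg, exclusion of vanishing by showing $u\equiv 0$ would force $c\geq \tfrac{2+\alpha}{2(N+\alpha)}S_\alpha^{\frac{N+\alpha}{2+\alpha}}$ (incompatible with the level bound since $m_a<0$), identification of the limit equation via Lemma \ref{lem33.3}, and the dichotomy $\ell=0$ or $\ell\geq S_\alpha^{\frac{N+\alpha}{2+\alpha}}$ for $\ell=\lim\|\nabla v_n\|_2^2$. The only (harmless) variation is in the last step: you extract $\|v_n\|_2\to 0$ first by comparing the tested equation with the Poho\v{z}aev splitting and then use $E(u)\geq c_a^g=m_a$, whereas the paper keeps $\|u\|_2^2\leq a$ open, uses $E(u)\geq m_{\|u\|_2^2}\geq m_a$ via Lemma \ref{lem6.4}, and recovers the $L^2$-convergence at the end by testing with $\varphi=u_n-u$.
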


\begin{proof}
The proof is divided into four steps.

\textbf{Step 1. We show  $\{u_n\}$ is bounded in
$H^1(\mathbb{R}^N)$.}  It follows from $P(u_n)=o_n(1)$ and
$E(u_n)=c+o_n(1)$ that
\begin{equation*}
E(u_n)=\left(\frac{1}{2}-\frac{1}{2\bar{p}}\right)\|\nabla
u_n\|_2^2+\left(\frac{\gamma_q}{2\bar{p}}-\frac{1}{q}\right)\mu\|
u_n\|_q^q+o_n(1).
\end{equation*}
Since $q\gamma_q<2<2\bar{p}$, by using the Gagliardo-Nirenberg
inequality, we obtain that
\begin{equation*}
\begin{split}
\left(\frac{1}{2}-\frac{1}{2\bar{p}}\right)\|\nabla u_n\|_2^2&\leq
c+\left(\frac{1}{q}-\frac{\gamma_q}{2\bar{p}}\right)\mu\|
u_n\|_q^q+o_n(1)\\
& \leq c+\left(\frac{1}{q}-\frac{\gamma_q}{2\bar{p}}\right)\mu
C_{N,q}^qa^{\frac{q}{2}(1-\gamma_q)}\|\nabla
u_n\|_2^{q\gamma_q}+o_n(1),
\end{split}
\end{equation*}
which implies that $\{\|\nabla u_n\|_2^2\}$ is bounded. Since
$\{u_n\}\subset S_a$, we obtain that $\{u_n\}$ is bounded in
$H^1(\mathbb{R}^N)$.

There exists $u\in H_{r}^1(\mathbb{R}^N)$ such that, up to a
subsequence, $u_n\rightharpoonup u$ weakly in $H^1(\mathbb{R}^N)$,
$u_n\to u$ strongly in $L^t(\mathbb{R}^N)$ with $t\in (2,2^*)$ and
$u_n\to u$ a.e. in $\mathbb{R}^N$.

\textbf{Step 2.  We claim that $u\not\equiv0$.} Suppose by
contradiction that $u\equiv 0$. By using $E(u_n)=c+o_n(1)$,
$P(u_n)=o_n(1)$, $\|u_n\|_q^{q}=o_n(1)$ and (\ref{e3.14}), we get
that
\begin{equation*}
E(u_n)=\left(\frac{1}{2}-\frac{1}{2\bar{p}}\right)\|\nabla
u_n\|_2^2+o_n(1)
\end{equation*}
and
\begin{equation}\label{e4.4}
\begin{split}
\|\nabla u_n\|_2^2&=\int_{\mathbb{R}^N}(I_\alpha\ast|u_n|^{\bar{p}})|u_n|^{\bar{p}}+o_n(1)\\
&\leq (S_\alpha^{-1}\|\nabla u_n\|_2^2)^{\bar{p}}+o_n(1).
\end{split}
\end{equation}
Since $c>0$, we obtain $\liminf_{n\to\infty}\|\nabla u_n\|_2^2>0$
and hence
$$\limsup_{n\to \infty}\|\nabla u_n\|_2^2\geq
S_\alpha^{\frac{N+\alpha}{2+\alpha}}.$$ Consequently,
\begin{equation*}
\begin{split}
c=\lim_{n\to\infty}\left\{\left(\frac{1}{2}-\frac{1}{2\bar{p}}\right)\|\nabla
u_n\|_2^2+o_n(1)\right\}\geq
\frac{2+\alpha}{2(N+\alpha)}S_\alpha^{\frac{N+\alpha}{2+\alpha}},
\end{split}
\end{equation*}
which contradicts to
\begin{equation*}
c<m_a+\frac{2+\alpha}{2(N+\alpha)}S_\alpha^{\frac{N+\alpha}{2+\alpha}}
\end{equation*}
and $m_a<0$. So $u\not\equiv0$.

\textbf{Step 3. We show $u$ is a solution to (\ref{e1.4}) with some
$\lambda<0$.} Since $\{u_n\}$ is a Palais-Smale sequence of
$E|_{S_a}$, by the Lagrange multipliers rule, there exists
$\lambda_n$ such that
\begin{equation}\label{e3.10}
\int_{\mathbb{R}^N}\left(\nabla u_n\cdot \nabla \varphi-\lambda_n
u_n\varphi-(I_\alpha\ast|u_n|^{\bar{p}})|u_n|^{\bar{p}-2}u_n\varphi-\mu|u_n|^{q-2}u_n\varphi\right)=o_n(1)\|\varphi\|_{H^1}
\end{equation}
for every $\varphi\in H^1(\mathbb{R}^N)$.  The choice $\varphi=u_n$
provides
\begin{equation}\label{e3.12}
\lambda_na=\|\nabla
u_n\|_2^2-\int_{\mathbb{R}^N}(I_\alpha\ast|u_n|^{\bar{p}})|u_n|^{\bar{p}}-\mu
\|u_n\|_q^{q}+o_n(1)
\end{equation}
and the boundedness of $\{u_n\}$ in $H^1(\mathbb{R}^N)$  implies
that $\lambda_n$ is bounded as well; thus, up to a subsequence
$\lambda_n\to \lambda\in \mathbb{R}$. Furthermore, by using
$P(u_n)=o_n(1)$, (\ref{e3.12}), $\mu>0$, $\gamma_q\in(0,1)$ and
$u_n\rightharpoonup u$ weakly in $H^1(\mathbb{R}^N)$, we obtain that
\begin{equation*}
\begin{split}
-\lambda_na&=\mu(1-\gamma_q) \|u_n\|_q^{q}+o_n(1)
\end{split}
\end{equation*}
and then
\begin{equation*}
-\lambda a\geq \mu(1-\gamma_q)\|u\|_q^{q}>0,
\end{equation*}
which implies that $\lambda<0$. By using (\ref{e3.10}) and Lemma
\ref{lem33.3}, we get that
\begin{equation}\label{e4.3}
\begin{split}
&\int_{\mathbb{R}^N}\left(\nabla u\cdot \nabla \varphi-\lambda
u\varphi-(I_\alpha\ast|u|^{\bar{p}})|u|^{\bar{p}-2}u\varphi
-\mu|u|^{q-2}u\varphi\right)\\
&=\lim_{n\to\infty}\int_{\mathbb{R}^N}\left(\nabla u_n\cdot \nabla
\varphi-\lambda_n
u_n\varphi-(I_\alpha\ast|u_n|^{\bar{p}})|u_n|^{\bar{p}-2}u_n\varphi
-\mu|u_n|^{q-2}u_n\varphi\right)\\
&=\lim_{n\to\infty}o_n(1)\|\varphi\|_{H^1}=0,
\end{split}
\end{equation}
which implies that $u$ satisfies the equation
\begin{equation}\label{e3.13}
-\Delta u=\lambda
u+(I_\alpha\ast|u|^{\bar{p}})|u|^{\bar{p}-2}u+\mu|u|^{q-2}u.
\end{equation}
Thus, $P(u)=0$ by Lemma \ref{lem3.8}.

\textbf{Step 4. We show $u_n\to u$ strongly in $H^1(\mathbb{R}^N)$.}
Set $v_n:=u_n-u$. Then we have
\begin{equation}\label{e1.31}
\|u_n\|_2^2=\|u\|_2^2+\|v_n\|_2^2+o_n(1),\ \|\nabla
u_n\|_2^2=\|\nabla u\|_2^2+\|\nabla v_n\|_2^2+o_n(1),
\end{equation}
\begin{equation}\label{e1.32}
\|u_n\|_q^q=\|u\|_q^q+\|v_n\|_q^q+o_n(1)=\|u\|_q^q+o_n(1)
\end{equation}
and
\begin{equation*}
\int_{\mathbb{R}^N}(I_\alpha\ast
|u_n|^{\bar{p}})|u_n|^{\bar{p}}=\int_{\mathbb{R}^N}(I_\alpha\ast
|u|^{\bar{p}})|u|^{\bar{p}}+\int_{\mathbb{R}^N}(I_\alpha\ast
|v_n|^{\bar{p}})|v_n|^{\bar{p}}+o_n(1),
\end{equation*}
which combines with $P(u_n)=o_n(1)$ and $P(u)=0$ give that
\begin{equation}\label{e4.5}
\|\nabla
v_n\|_2^2=\int_{\mathbb{R}^N}(I_\alpha\ast|v_n|^{\bar{p}})|v_n|^{\bar{p}}+o_n(1).
\end{equation}
Similarly to (\ref{e4.4}), we infer that
\begin{equation*}
\limsup_{n\to\infty}\|\nabla v_n\|_2^2\geq
S_\alpha^{\frac{N+\alpha}{2+\alpha}}\ \ \mathrm{or}\ \
\liminf_{n\to\infty}\|\nabla v_n\|_2^2=0.
\end{equation*}
If $\limsup_{n\to\infty}\|\nabla v_n\|_2^2\geq
S_\alpha^{\frac{N+\alpha}{2+\alpha}}$, then by using the fact that
$u$ satisfies (\ref{e3.13}), $\|u\|_2^2\leq a$, (\ref{e1.31}),
(\ref{e1.32}), and Lemma \ref{lem6.4}, we obtain that
\begin{equation*}
\begin{split}
E(u_n)&=\left(\frac{1}{2}-\frac{1}{2\bar{p}}\right)\|\nabla
u_n\|_2^2+\left(\frac{\gamma_q}{2\bar{p}}-\frac{1}{q}\right)\mu\|
u_n\|_q^q+o_n(1)\\
&=\left(\frac{1}{2}-\frac{1}{2\bar{p}}\right)\|\nabla
u\|_2^2+\left(\frac{\gamma_q}{2\bar{p}}-\frac{1}{q}\right)\mu\|
u\|^q+\left(\frac{1}{2}-\frac{1}{2\bar{p}}\right)\|\nabla
v_n\|_2^2+o_n(1)\\
&\geq m_{\|u\|_2^2}+\frac{2+\alpha}{2(N+\alpha)}S_\alpha^{\frac{N+\alpha}{2+\alpha}}+o_n(1)\\
&\geq
m_{a}+\frac{2+\alpha}{2(N+\alpha)}S_\alpha^{\frac{N+\alpha}{2+\alpha}}+o_n(1),
\end{split}
\end{equation*}
which contradicts $E(u_n)=c+o_n(1)$ and
$c<m_{a}+\frac{2+\alpha}{2(N+\alpha)}S_\alpha^{\frac{N+\alpha}{2+\alpha}}$.
Thus, $$\liminf_{n\to\infty}\|\nabla v_n\|_2^2=0$$ holds. So up to a
subsequence, $\nabla u_n\to \nabla u$ in $L^2(\mathbb{R}^N)$.
Choosing $\varphi=u_n-u$ in (\ref{e3.10}) and (\ref{e4.3}), and
subtracting, we obtain that
\begin{equation*}
\int_{\mathbb{R}^N}(|\nabla (u_n-u)|^2-\lambda|u_n-u|^2)\to 0.
\end{equation*}
Since $\lambda<0$, we get that $u_n\to u$ strongly in
$H^1(\mathbb{R}^N)$. The proof is complete.
\end{proof}

\textbf{Proof of Theorem \ref{thm6.3}}. It is a direct result of
Lemmas \ref{pro1.2}, \ref{pro1.3},  \ref{lem7.23}, \ref{lem7.21},
\ref{lem7.1}, \ref{lem7.3} and \ref{pro1.4}.

\section{Positivity, symmetry and exponential decay of solution $u$\\ to (\ref{e1.5}) with  $E(u)=\inf_{\mathcal{P}_{a,-}}E(v)$}
\setcounter{section}{5} \setcounter{equation}{0}

In this section, we prove Theorem \ref{thm1.3}. For future use, we
first give the following result.

\begin{lemma}\label{lem3.3}
Let $N\geq 3$, $\alpha\in (0,N)$, $p=\bar{p}$, $q\in
(2,2+\frac{4}{N})$,  $\mu>0$, $a>0$ and $\mu
a^{\frac{q(1-\gamma_q)}{2}}\leq
(2K)^{\frac{q\gamma_q-2\bar{p}}{2(\bar{p}-1)}}$.  If  $u\in
\mathcal{P}_{a,-}$ such that $E(u)=\inf_{\mathcal{P}_{a,-}}E(v)$,
then  $u$ satisfies the
 equation (\ref{e1.5}) with some $\lambda<0$.
\end{lemma}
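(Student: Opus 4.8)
The plan is to show that the Poho\v{z}aev set $\mathcal{P}_a$ is a natural constraint at $u$, so that a minimizer of $E$ over $\mathcal{P}_{a,-}$ is automatically a free critical point of $E|_{S_a}$, and then to read off the sign of the multiplier; this follows the scheme of \cite{Soave JDE} and \cite{Jeanjean-Le}. First I would record the elementary structural facts: since $u\in\mathcal{P}_{a,-}$ we have $P(u)=0$ and, by the very definition of $\mathcal{P}_{a,-}$, $E(u)>0$; because $E(u_{\tau_u^+})<0$ by Lemma~\ref{lem6.6}(1), the critical point of $\Psi_u$ at which $u$ sits must be $\tau_u^-=1$, and hence, by Lemma~\ref{lem6.6}(3), $\Psi_u''(1)=\Psi_u''(\tau_u^-)<0$. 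Moreover, since $E$ is continuous and $E(u)>0$, in a neighbourhood of $u$ the set $\mathcal{P}_{a,-}$ coincides with $\{v\in S_a:P(v)=0\}$, so $u$ minimizes $E$ over the latter set near $u$.

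Next I would run the Lagrange multiplier argument on the manifold $S_a$. The key computation uses the dilation direction $w:=\frac{d}{d\tau}\big|_{\tau=1}u_\tau$; since $\|u_\tau\|_2=\|u\|_2$ for all $\tau$, this $w$ is tangent to $S_a$ at $u$, and for any $C^1$ functional $G$ on $H^1(\mathbb{R}^N)$ one has $\langle G'(u),w\rangle=\frac{d}{d\tau}\big|_{\tau=1}G(u_\tau)$. Applying this with $G=P$ and using the identity $P(u_\tau)=\tau\Psi_u'(\tau)$ gives $\langle P'(u),w\rangle=\frac{d}{d\tau}\big|_{\tau=1}\big(\tau\Psi_u'(\tau)\big)=\Psi_u'(1)+\Psi_u''(1)=\Psi_u''(1)<0$, because $\Psi_u'(1)=P(u)=0$. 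In particular $(P|_{S_a})'(u)\neq 0$, so $\{v\in S_a:P(v)=0\}$ is a $C^1$-submanifold of $S_a$ of codimension one near $u$, and Lagrange's rule yields $\nu\in\mathbb{R}$ with $(E|_{S_a})'(u)=\nu\,(P|_{S_a})'(u)$. Pairing this identity with $w$ and using $\langle E'(u),w\rangle=\frac{d}{d\tau}\big|_{\tau=1}E(u_\tau)=\Psi_u'(1)=P(u)=0$ gives $0=\nu\,\Psi_u''(1)$, whence $\nu=0$ because $\Psi_u''(1)<0$. Therefore $(E|_{S_a})'(u)=0$, i.e.\ $u$ is a critical point of $E$ on $S_a$ and solves (\ref{e1.5}) with some Lagrange multiplier $\lambda\in\mathbb{R}$. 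To make the dilation computations fully rigorous (note that $w$ need not lie in $H^1(\mathbb{R}^N)$) I would, exactly as in the proof of Lemma~\ref{pro1.2}, carry them out through the $C^1$ auxiliary functional $\tilde E(\tau,v):=E(v_\tau)=\Psi_v(\tau)$ on $\mathbb{R}_+\times H^1(\mathbb{R}^N)$, for which $\mathcal{P}_a$ corresponds to $\partial_\tau\tilde E(1,\cdot)=0$.

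Finally, the sign of $\lambda$ is obtained as in (\ref{e***}): testing (\ref{e1.5}) against $u$ gives $\|\nabla u\|_2^2-\int_{\mathbb{R}^N}(I_\alpha\ast|u|^{\bar{p}})|u|^{\bar{p}}-\mu\|u\|_q^q=\lambda a$, and subtracting $P(u)=0$ yields $\lambda a=\mu(\gamma_q-1)\|u\|_q^q$. Since $2<q<2+\frac4N$ forces $0<\gamma_q<1$, while $\mu>0$ and $u\in S_a$ (so $\|u\|_q>0$), we conclude $\lambda<0$.

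The only genuinely non-routine ingredient is the strict non-degeneracy $\Psi_u''(\tau_u^-)<0$ from Lemma~\ref{lem6.6}(3): it is exactly this that makes $\mathcal{P}_a$ a natural constraint and forces $\nu=0$. Everything else is standard bookkeeping with the dilation flow and the $C^1$-regularity of $E$ and $P$ on $H^1(\mathbb{R}^N)$ (for the nonlocal term this rests on the Hardy--Littlewood--Sobolev inequality of Lemma~\ref{lem HLS} together with $\bar{p}$ lying at the admissible endpoint), so I do not anticipate any serious obstacle beyond the careful handling of the scaling direction via $\tilde E$.
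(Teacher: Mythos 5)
Your argument is correct and follows essentially the same route as the paper: a Lagrange multiplier rule with the two constraints $S_a$ and $P=0$, elimination of the Poho\v{z}aev multiplier by pairing with the scaling direction, and then the identity $\lambda a=\mu(\gamma_q-1)\|u\|_q^q$ to conclude $\lambda<0$. The only (harmless) difference is in how the extra multiplier is killed: you invoke the nondegeneracy $\Psi_u''(\tau_u^-)<0$ from Lemma \ref{lem6.6}(3), whereas the paper assumes $\eta\neq0$, combines the resulting identity with $P(u)=0$ to compute $E(u)=\frac{(\bar{p}-1)(q\gamma_q-2)}{2\bar{p}q\gamma_q}\|\nabla u\|_2^2<0$ and contradicts $\inf_{\mathcal{P}_{a,-}}E\geq 0$; the two are really the same test, since the bracket the paper sets to zero equals $\Psi_u''(1)+\Psi_u'(1)=\Psi_u''(1)$.
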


\begin{proof}
By the Lagrange multipliers rule, there exist $\lambda$ and $\eta$
such that $u$ satisfies
\begin{equation}\label{e3.34}
\begin{split}
-\Delta u-&(I_\alpha\ast|u|^{\bar{p}})|u|^{\bar{p}-2}u-\mu|u|^{q-2}u\\
&\qquad=\lambda u+\eta[-2\Delta
u-2\bar{p}(I_\alpha\ast|u|^{\bar{p}})|u|^{\bar{p}-2}u-\mu
q\gamma_q|u|^{q-2}u],
\end{split}
\end{equation}
or equivalently,
\begin{equation*}
-(1-2\eta)\Delta u=\lambda u+(1-\eta
2\bar{p})(I_\alpha\ast|u|^{\bar{p}})|u|^{\bar{p}-2}u+\mu(1-\eta
q\gamma_q) |u|^{q-2}u.
\end{equation*}

 Next we show $\eta=0$. Similarly to the definition of $P(u)$ (see Lemma \ref{lem3.8}), we obtain
\begin{equation*}
(1-2\eta)\|\nabla u\|_2^2-(1-\eta 2\bar{p})
\int_{\mathbb{R}^N}(I_\alpha\ast|u|^{\bar{p}})|u|^{\bar{p}}-\mu(1-\eta
q\gamma_q)\gamma_q \|u\|_q^{q}=0,
\end{equation*}
which combines with $P(u)=0$ give that
\begin{equation*}
\eta\left(2\|\nabla
u\|_2^2-2\bar{p}\int_{\mathbb{R}^N}(I_\alpha\ast|u|^{\bar{p}})|u|^{\bar{p}}-\mu
q\gamma_q^2\|u\|_q^{q}\right)=0.
\end{equation*}
If $\eta\neq0$, then
\begin{equation*}
2\|\nabla
u\|_2^2-2\bar{p}\int_{\mathbb{R}^N}(I_\alpha\ast|u|^{\bar{p}})|u|^{\bar{p}}-\mu
q\gamma_q^2\|u\|_q^{q}=0,
\end{equation*}
which combines with $P(u)=0$ give that
\begin{equation*}
\begin{cases}
\mu\gamma_q(2\bar{p}-q\gamma_q)\|u\|_q^q=(2\bar{p}-2)\|\nabla u\|_2^2,   \\
(q\gamma_q-2\bar{p})\int_{\mathbb{R}^N}(I_\alpha\ast|u|^{\bar{p}})|u|^{\bar{p}}=(q\gamma_q-2)\|\nabla
u\|_2^2.
\end{cases}
\end{equation*}
Hence,
\begin{equation*}
E(u)=\frac{(\bar{p}-1)(q\gamma_q-2)}{2\bar{p}q\gamma_q}\|\nabla
u\|_2^2<0,
\end{equation*}
which contradicts to $\inf_{\mathcal{P}_{a,-}}E(v)\geq 0$, see Lemma
\ref{lem6.6}. So $\eta=0$.

It follows from (\ref{e3.34}) with $\eta=0$, $P(u)=0$, $0<
\gamma_q<1$ and $\mu>0$ that
\begin{equation*}
\begin{split}
\lambda a=\|\nabla u\|_2^2-
\int_{\mathbb{R}^N}(I_\alpha\ast|u|^{\bar{p}})|u|^{\bar{p}}-\mu\|u\|_q^{q}=\mu
(\gamma_q-1) \|u\|_q^{q}<0,
\end{split}
\end{equation*}
which implies $\lambda<0$. The proof is complete.
\end{proof}

Nextly, we study the positivity of the solution $u$ to (\ref{e1.5})
with $E(u)=\inf_{\mathcal{P}_{a,-}}E(v)$. By Lemma \ref{lem3.3}, it
is enough to prove the following result.

\begin{proposition}\label{lem4.5}
Let $N\geq 3$, $\alpha\in (0,N)$, $p=\bar{p}$, $q\in
(2,2+\frac{4}{N})$,  $\mu>0$, $a>0$ and $\mu
a^{\frac{q(1-\gamma_q)}{2}}\leq
(2K)^{\frac{q\gamma_q-2\bar{p}}{2(\bar{p}-1)}}$.  If $u \in
\mathcal{P}_{a,-}$  such that $E(u)=\inf_{\mathcal{P}_{a,-}}E(v)$,
then $|u|_{\tau_{|u|}^-}\in \mathcal{P}_{a,-}$ and
$E(|u|_{\tau_{|u|}^-})=\inf_{\mathcal{P}_{a,-}}E(v)$. Moreover,
$|u|_{\tau_{|u|}^-}>0$ in $\mathbb{R}^N$.
\end{proposition}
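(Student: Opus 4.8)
The plan is to combine a symmetrization/rearrangement argument with the maximum principle, following the strategy already outlined in the introduction (the "theories of polarization" remark). First I would show that passing from $u$ to its modulus $|u|$ does not increase the energy along the fiber map. Since $u$ solves \eqref{e1.5}, we have $|u|\in S_a$, $\|\nabla |u|\|_2\le \|\nabla u\|_2$, $\||u|\|_q=\|u\|_q$, and the nonlocal term satisfies $\int(I_\alpha\ast||u||^{\bar p})||u||^{\bar p}=\int(I_\alpha\ast|u|^{\bar p})|u|^{\bar p}$ (it only sees $|u|$). Hence $\Psi_{|u|}(\tau)\le \Psi_u(\tau)$ for every $\tau>0$. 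By Lemma \ref{lem6.6} there is a unique $\tau_{|u|}^->0$ with $(|u|)_{\tau_{|u|}^-}\in\mathcal{P}_{a,-}$, so
\begin{equation*}
\inf_{\mathcal{P}_{a,-}}E(v)\le E\big(|u|_{\tau_{|u|}^-}\big)=\Psi_{|u|}(\tau_{|u|}^-)\le \Psi_u(\tau_{|u|}^-)\le \max_{\tau>0}\Psi_u(\tau)=\Psi_u(1)=E(u)=\inf_{\mathcal{P}_{a,-}}E(v),
\end{equation*}
where the step $\Psi_u(\tau_{|u|}^-)\le\Psi_u(1)$ uses that $\tau_u^-=1$ (because $u\in\mathcal{P}_{a,-}$) is the global maximum of $\Psi_u$. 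Therefore all inequalities are equalities, $|u|_{\tau_{|u|}^-}\in\mathcal{P}_{a,-}$ and $E(|u|_{\tau_{|u|}^-})=\inf_{\mathcal{P}_{a,-}}E(v)$.

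Next I would apply Lemma \ref{lem3.3} to the minimizer $w:=|u|_{\tau_{|u|}^-}$: it satisfies \eqref{e1.5} with some $\lambda<0$, i.e.
\begin{equation*}
-\Delta w=\lambda w+(I_\alpha\ast|w|^{\bar p})|w|^{\bar p-2}w+\mu|w|^{q-2}w.
\end{equation*}
By construction $w\ge 0$ (it is a rescaling of $|u|\ge0$), and $w\not\equiv 0$ since $\|w\|_2^2=a>0$. Rewriting the equation as $-\Delta w + (-\lambda)w = (I_\alpha\ast w^{\bar p})w^{\bar p-1}+\mu w^{q-1}\ge 0$ with $-\lambda>0$, the potential $V:=(I_\alpha\ast w^{\bar p})w^{\bar p-2}+\mu w^{q-2}$ (which may be singular where $w=0$ when $\bar p<2$ or $q<2$, but is nonnegative and in $L^1_{loc}$ away from the zero set) is nonnegative, so $w$ is a nonnegative nontrivial supersolution of $-\Delta + (-\lambda)$. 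By the strong maximum principle (Harnack inequality for $-\Delta w + (-\lambda)w\ge 0$, $w\ge 0$), $w>0$ in $\mathbb{R}^N$. To handle the possible singularity of the zero-order term cleanly, I would instead argue directly: $w$ satisfies $-\Delta w\ge \lambda w = -|\lambda| w$, hence $-\Delta w + |\lambda| w\ge 0$, and this inequality has no singularity; a nonnegative nontrivial solution of this differential inequality cannot vanish at any point by the strong maximum principle, giving $w>0$.

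The main obstacle is the regularity and the handling of the nonlocal and possibly-singular lower-order terms when invoking the maximum principle: one must make sure $w\in C^2$ or at least $C^1$ near any prospective zero, or phrase the positivity purely via the clean inequality $-\Delta w+|\lambda|w\ge0$ so that classical/weak Harnack suffices without controlling $(I_\alpha\ast w^{\bar p})w^{\bar p-2}$ pointwise. Standard elliptic regularity bootstrapping (using Remark \ref{rek1.31} to place $I_\alpha\ast w^{\bar p}$ in suitable $L^s$ spaces, together with $w\in L^{2^*}$) gives enough regularity of $w$ for the argument to go through; the nonlocal term is harmless here because it appears with a favorable sign and is locally bounded once $w$ is known to be, say, continuous. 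I would finish by noting that $w=|u|_{\tau_{|u|}^-}$ is exactly the claimed minimizer, completing the proof.
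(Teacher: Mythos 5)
Your proposal is correct and follows essentially the same route as the paper: the comparison $\Psi_{|u|}(\tau)\leq\Psi_u(\tau)$ together with Lemma \ref{lem6.6} and the fact that $\tau_u^-=1$ yields the chain of equalities, Lemma \ref{lem3.3} supplies the Euler--Lagrange equation with $\lambda<0$, and positivity follows from the strong maximum principle applied to $-\Delta w+|\lambda|w\geq 0$. The only cosmetic difference is that the paper obtains the continuity of $w$ needed for the maximum principle by citing a regularity theorem from \cite{Li-Ma 2020} rather than by the bootstrap you sketch.
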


\begin{proof}
It follows from $\|\nabla |u|\|_2^2\leq \|\nabla u\|_2^2$ that
$\Psi_{|u|}(\tau)\leq \Psi_{u}(\tau)$ for any $\tau>0$. By Lemma
\ref{lem6.6}, we have
\begin{equation*}
E(|u|_{\tau_{|u|}^-})=\Psi_{|u|}(\tau_{|u|}^-)\leq
\Psi_{u}(\tau_{|u|}^-)\leq \Psi_{u}(\tau_{u}^-)=E(u).
\end{equation*}
Since $|u|_{\tau_{|u|}^-}\in \mathcal{P}_{a,-}$, we obtain that
$E(|u|_{\tau_{|u|}^-})=\inf_{\mathcal{P}_{a,-}}E(v)$. By Lemma
\ref{lem3.3}, there exists $\lambda<0$ such that
$|u|_{\tau_{|u|}^-}$ satisfies the equation
\begin{equation*}
-\Delta u=\lambda u+(I_\alpha\ast
|u|^{\bar{p}})|u|^{\bar{p}-2}u+\mu|u|^{q-2}u.
\end{equation*}
Since $|u|_{\tau_{|u|}^-}$ is continuous by Theorem 2.1 in
\cite{Li-Ma 2020}, the strong maximum principle implies  that
$|u|_{\tau_{|u|}^-}>0$ in $\mathbb{R}^N$.
\end{proof}

Nextly, we study the radial symmetry of the solution $u$ to
(\ref{e1.5}) with $E(u)=\inf_{\mathcal{P}_{a,-}}E(v)$. We follow the
arguments of \cite{Moroz-Schaftingen 2015} which rely on
polarization. So we first recall some theories of polarization
(\cite{{Brock-Solynin 2000},{Moroz-Schaftingen JFA 2013},{Van
Schaftingen-Willem 2008}}).

Assume that $H\subset \mathbb{R}^N$ is a closed half-space and that
$\sigma_H$ is the reflection with respect to $\partial H$. The
polarization $u^H : \mathbb{R}^N \to \mathbb{R}$ of $u :
\mathbb{R}^N \to \mathbb{R}$ is defined for $x\in \mathbb{R}^N$ by
\begin{equation*}
u^H(x)=\left\{
\begin{array}{ll}
\max\{u(x),u(\sigma_H(x))\}, &\ \mathrm{if}\ x\in H,\\
\min\{u(x),u(\sigma_H(x))\}, &\ \mathrm{if}\ x\not\in H.
\end{array}
\right.
\end{equation*}

\begin{lemma}\label{lem4.1}
(Polarization and Dirichlet integrals, Lemma 5.3 in
\cite{Brock-Solynin 2000}). Let $H\subset \mathbb{R}^N$ be a closed
half-space. If $u\in H^1(\mathbb{R}^N)$, then $u^H\in
H^1(\mathbb{R}^N)$ and
\begin{equation*}
\int_{\mathbb{R}^N}|\nabla u^H|^2=\int_{\mathbb{R}^N}|\nabla u|^2.
\end{equation*}
\end{lemma}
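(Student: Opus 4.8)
The plan is to reduce the identity to the elementary pointwise rule for the gradients of the maximum and minimum of two Sobolev functions, exploiting that the reflection $\sigma_H$ is an affine isometry of $\mathbb{R}^N$. First I would split the integral over $H$ and its complement,
\[
\int_{\mathbb{R}^N}|\nabla u^H|^2 = \int_H |\nabla u^H|^2 + \int_{\mathbb{R}^N\setminus H}|\nabla u^H|^2,
\]
and change variables by $y=\sigma_H(x)$ in the second term; since $\sigma_H$ is orthogonal and measure preserving, the chain rule gives $\int_{\mathbb{R}^N\setminus H}|\nabla u^H|^2 = \int_H |\nabla (u^H\circ\sigma_H)|^2$. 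Writing $v:=u|_H$ and $w:=(u\circ\sigma_H)|_H$, both in $H^1(H)$, the definition of $u^H$ yields $u^H|_H=\max\{v,w\}$ and $(u^H\circ\sigma_H)|_H=\min\{v,w\}$, so the claim becomes
\[
\int_H\bigl(|\nabla\max\{v,w\}|^2+|\nabla\min\{v,w\}|^2\bigr) = \int_H\bigl(|\nabla v|^2+|\nabla w|^2\bigr),
\]
whose right-hand side equals $\int_H|\nabla u|^2 + \int_{\mathbb{R}^N\setminus H}|\nabla u|^2 = \int_{\mathbb{R}^N}|\nabla u|^2$ after undoing the change of variables.

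The second step is the pointwise identity on $H$. By the standard truncation results for Sobolev functions, $\max\{v,w\},\min\{v,w\}\in H^1(H)$ with
\[
\nabla\max\{v,w\} = \chi_{\{v\ge w\}}\nabla v + \chi_{\{v<w\}}\nabla w,\qquad \nabla\min\{v,w\} = \chi_{\{v\ge w\}}\nabla w + \chi_{\{v<w\}}\nabla v
\]
almost everywhere, where one also uses that $\nabla v=\nabla w$ a.e.\ on the coincidence set $\{v=w\}$. Squaring and adding, no cross terms survive because the two indicator sets are disjoint, and one obtains $|\nabla\max\{v,w\}|^2+|\nabla\min\{v,w\}|^2 = |\nabla v|^2+|\nabla w|^2$ a.e.\ on $H$; integrating gives the displayed identity, hence the equality of Dirichlet integrals.

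Finally, to conclude $u^H\in H^1(\mathbb{R}^N)$ I would check that the two $H^1$ pieces — $\max\{v,w\}$ on $H$ and its $\sigma_H$-reflection $\min\{v,w\}$ on $\mathbb{R}^N\setminus H$ — glue across $\partial H$ without a jump: since $\sigma_H$ fixes $\partial H$, the trace of $w=u\circ\sigma_H$ equals that of $v$ on $\partial H$, so $\operatorname{tr}\max\{v,w\}=\operatorname{tr}v=\operatorname{tr}\min\{v,w\}$ there, the boundary traces of the two pieces agree, and the union lies in $H^1(\mathbb{R}^N)$. I expect the only genuinely delicate point to be the pointwise gradient formulas for $\max$ and $\min$ together with the behaviour on $\{v=w\}$; the rest is bookkeeping with the isometry $\sigma_H$. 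As an alternative one could prove the identity first for $u\in C_c^\infty(\mathbb{R}^N)$ and pass to the limit using the $H^1$-continuity of $u\mapsto u^H$, but the direct argument above avoids invoking that continuity.
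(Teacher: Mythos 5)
Your argument is correct. Note first that the paper does not prove this lemma at all: it is quoted verbatim as Lemma~5.3 of Brock--Solynin and used as a black box, so there is no ``paper proof'' to compare against. Your reduction is the standard one and it works: reflecting the integral over $\mathbb{R}^N\setminus H$ back onto $H$ via the isometry $\sigma_H$, identifying $u^H|_H=\max\{v,w\}$ and $(u^H\circ\sigma_H)|_H=\min\{v,w\}$ with $v=u|_H$, $w=(u\circ\sigma_H)|_H$, and then invoking the truncation formulas $\nabla\max\{v,w\}=\chi_{\{v\ge w\}}\nabla v+\chi_{\{v<w\}}\nabla w$ (together with $\nabla v=\nabla w$ a.e.\ on $\{v=w\}$, which follows from $\nabla(v-w)=0$ a.e.\ on the zero set of $v-w$) gives the pointwise identity $|\nabla\max\{v,w\}|^2+|\nabla\min\{v,w\}|^2=|\nabla v|^2+|\nabla w|^2$, hence equality of the Dirichlet integrals. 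The gluing across $\partial H$ via matching traces is also fine, since $\sigma_H$ fixes $\partial H$ pointwise so $\operatorname{tr}v=\operatorname{tr}w$ there. Two trivial points you leave implicit: the lemma tacitly concerns real-valued $u$ (polarization is not defined for complex-valued functions, and in the paper's application $u$ is a positive solution), and to conclude $u^H\in H^1$ one should also record that $\int|u^H|^2=\int|u|^2$, which is immediate because $\{u^H(x),u^H(\sigma_H(x))\}=\{u(x),u(\sigma_H(x))\}$ for a.e.\ $x$. Neither affects the validity of your proof.
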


\begin{lemma}\label{lem4.2}
(Polarization and nonlocal integrals, Lemma 5.3 in
\cite{Moroz-Schaftingen JFA 2013}). Let $\alpha\in(0,N)$, $u\in
L^{\frac{2N}{N+\alpha}} (\mathbb{R}^N)$ and $H\subset \mathbb{R}^N$
be a closed half-space. If $u\geq 0$, then
\begin{equation*}
\int_{\mathbb{R}^N}\int_{\mathbb{R}^N}\frac{u(x)u(y)}{|x-y|^{N-\alpha}}dxdy
\leq
\int_{\mathbb{R}^N}\int_{\mathbb{R}^N}\frac{u^H(x)u^H(y)}{|x-y|^{N-\alpha}}dxdy,
\end{equation*}
with equality if and only if either $u^H = u$ or $u^H = u\circ
\sigma_H$.
\end{lemma}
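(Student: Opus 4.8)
The plan is to prove the lemma by folding the double integral along the hyperplane $\partial H$, reducing it to an integral over $H\times H$ on which the assertion becomes an elementary pointwise inequality. Write $\sigma:=\sigma_H$, $\bar x:=\sigma(x)$, and let $D:=\sigma(H)$ be the opposite half-space, so that $\mathbb{R}^N=H\cup D$ up to the null set $\partial H$. Since $\sigma$ is an isometry, the kernel $k(x,y):=|x-y|^{-(N-\alpha)}$ satisfies $k(\bar x,\bar y)=k(x,y)$ and $k(\bar x,y)=k(x,\bar y)$; moreover, because $\partial H$ is the perpendicular bisector of the segment $[y,\bar y]$, for $x,y\in H$ one has $|x-y|\le|x-\bar y|$, with equality only when $x\in\partial H$ or $y\in\partial H$. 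Setting $a:=k(x,y)=k(\bar x,\bar y)$ and $b:=k(x,\bar y)=k(\bar x,y)$ for $x,y\in H$, this gives $a\ge b>0$, with $a>b$ off a set of measure zero. The hypotheses $u\in L^{2N/(N+\alpha)}(\mathbb{R}^N)$ and $u\ge0$, together with Lemma~\ref{lem HLS} and the identity $\|u^H\|_{L^{2N/(N+\alpha)}}=\|u\|_{L^{2N/(N+\alpha)}}$ (polarization merely permutes the values of $u$ on each pair $\{x,\bar x\}$), make both double integrals finite and all integrands below nonnegative, which legitimizes the splitting and the changes of variables.

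First I would split $\mathbb{R}^N\times\mathbb{R}^N$ into the four regions $H\times H$, $H\times D$, $D\times H$, $D\times D$ and, substituting $x\mapsto\bar x$ and/or $y\mapsto\bar y$ in the last three pieces, arrive at
\begin{equation*}
\int_{\mathbb{R}^N}\!\int_{\mathbb{R}^N}\frac{u(x)u(y)}{|x-y|^{N-\alpha}}\,dx\,dy=\int_H\!\int_H\Big[a\big(u(x)u(y)+u(\bar x)u(\bar y)\big)+b\big(u(x)u(\bar y)+u(\bar x)u(y)\big)\Big]dx\,dy,
\end{equation*}
together with the same identity for $u^H$ in place of $u$. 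Fixing $x,y\in H$ and setting $s=u(x)$, $t=u(\bar x)$, $s'=u(y)$, $t'=u(\bar y)$, one has $u^H(x)=\max\{s,t\}$, $u^H(\bar x)=\min\{s,t\}$ and likewise at $y$, so the lemma reduces to the scalar inequality
\begin{equation*}
a\big(\max\{s,t\}\max\{s',t'\}+\min\{s,t\}\min\{s',t'\}\big)+b\big(\max\{s,t\}\min\{s',t'\}+\min\{s,t\}\max\{s',t'\}\big)\ge a(ss'+tt')+b(st'+ts').
\end{equation*}
Assuming $s\ge t$ without loss of generality (the expression is invariant under $(s,t,s',t')\mapsto(t,s,t',s')$): if $s'\ge t'$ the two sides coincide, while if $s'<t'$ the left-hand side equals $a(st'+ts')+b(ss'+tt')$, so the difference (left minus right) is $(a-b)(s-t)(t'-s')\ge0$. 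Integrating over $H\times H$ proves the inequality.

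For the equality statement, equality of the two integrals forces the nonnegative pointwise integrand difference to vanish for a.e.\ $(x,y)\in H\times H$. Since $a>b$ for a.e.\ $(x,y)$ there, the computation above shows that this holds iff $\big(u(x)-u(\bar x)\big)\big(u(y)-u(\bar y)\big)\ge0$ for a.e.\ $x,y\in H$. Putting $w:=u-u\circ\sigma$ on $H$, the sets $\{w>0\}$ and $\{w<0\}$ then cannot both have positive measure (otherwise, by Fubini, $w(x)w(y)<0$ on a set of positive measure), so either $w\ge0$ a.e.\ on $H$, i.e.\ $u^H=u$, or $w\le0$ a.e.\ on $H$, i.e.\ $u^H=u\circ\sigma_H$; the converse implications are immediate, since each makes the two integrals literally the same after an isometric change of variables.

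The only genuinely delicate points are the bookkeeping in the four-region decomposition and the observation that $a>b$ off a null set, which is what prevents the strict positivity of the integrand difference on $\{(u(x)-u(\bar x))(u(y)-u(\bar y))<0\}$ from being washed out by the degeneracy $a=b$ on $\partial H$. Granting these, the pointwise inequality and its equality analysis are a short case check. One should also make sure to invoke the Hardy--Littlewood--Sobolev bound of Lemma~\ref{lem HLS} at the outset, so that subtracting the two integral identities — and hence passing from the scalar inequality to the claimed equality characterization — is meaningful.
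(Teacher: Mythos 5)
Your proof is correct. The paper does not prove this lemma at all — it is quoted verbatim from Lemma 5.3 of the cited Moroz–Van Schaftingen reference — and your folding argument (reduce to $H\times H$, use $|x-y|\le|x-\bar y|$ for $x,y\in H$ with equality only on the null set $\partial H$, then the elementary rearrangement inequality and a Fubini argument for the equality case) is precisely the standard proof of that result, with all the relevant technical points (integrability via Lemma \ref{lem HLS}, strictness of $a>b$ off a null set) handled correctly.
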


\begin{lemma}\label{lem4.4}
(Symmetry and polarization, Proposition 3.15 in \cite{Van
Schaftingen-Willem 2008}, Lemma 5.4 in \cite{Moroz-Schaftingen JFA
2013}). Assume that $u\in L^2(\mathbb{R}^N)$ is nonnegative. There
exist $x_0\in \mathbb{R}^N$ and a non-increasing function $v :
(0,\infty) \to \mathbb{R}$ such that for almost every $x\in
\mathbb{R}^N$, $u(x)=v(|x-x_0|)$ if and only if for every closed
half-space $H\subset \mathbb{R}^N$, $u^H = u$ or $u^H = u\circ
\sigma_H$.
\end{lemma}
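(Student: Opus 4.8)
The plan is to treat the two implications separately, the nontrivial one being that the polarization dichotomy forces radial symmetry. The forward implication is elementary: if $u(x)=v(|x-x_0|)$ with $v$ non-increasing and $H$ is a closed half-space, then when $x_0\in H$ every $x\in H$ lies (weakly) on the $x_0$-side of the bisecting hyperplane $\partial H$, so $|x-x_0|\le|x-\sigma_H(x_0)|=|\sigma_H(x)-x_0|$, whence $u(x)\ge u(\sigma_H(x))$ for all $x\in H$ and therefore $u^H=u$; when $x_0\notin H$ the same computation gives $u^H=u\circ\sigma_H$, and $u^H=u$ trivially if $x_0\in\partial H$.

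Conversely, suppose $u^H\in\{u,u\circ\sigma_H\}$ for every closed half-space $H$; we may assume $u\not\equiv0$. The external input is the Brock--Solynin approximation theorem \cite{Brock-Solynin 2000} (see also \cite{Van Schaftingen-Willem 2008}): there is a sequence $\{H_n\}$ of closed half-spaces for which the iterated polarizations $u^{H_1H_2\cdots H_n}$ converge in $L^2(\mathbb{R}^N)$ to the Schwartz rearrangement $u^*$ of $u$, centered at the origin. First I would note that the dichotomy is invariant under isometries of $\mathbb{R}^N$: for an isometry $g$ one has the conjugation identity $(w\circ g)^{g^{-1}(H)}=w^H\circ g$, together with $\sigma_{g(H)}=g\circ\sigma_H\circ g^{-1}$, from which $w\circ g$ inherits the dichotomy from $w$. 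An induction on $n$ then shows each iterate is a rigid motion of $u$, namely $u^{H_1\cdots H_n}=u\circ\sigma_n$ with $\sigma_n$ an isometry (a product of at most $n$ reflections): applying the dichotomy for $u\circ\sigma_{n-1}$ to the half-space $H_n$ yields $u^{H_1\cdots H_n}=u\circ\sigma_{n-1}$ or $u\circ(\sigma_{n-1}\circ\sigma_{H_n})$.

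It then remains to pass to the limit. I would fix $t_0>0$ with $0<|\{u^*>t_0\}|<\infty$ and with $t_0$ not a jump value of the distribution function of $u^*$, so that $\{u^*>t_0\}=B(0,\rho)$ for some $\rho\in(0,\infty)$; the $L^2$ convergence $u\circ\sigma_n\to u^*$ then gives $\sigma_n^{-1}(\{u>t_0\})=\{u\circ\sigma_n>t_0\}\to B(0,\rho)$ in measure. Applying the measure-preserving maps $\sigma_n$, the fixed finite-measure set $\{u>t_0\}$ overlaps $B(\sigma_n(0),\rho)$ in measure bounded away from zero for all large $n$, which forces $\{\sigma_n(0)\}$ to stay in a bounded set; combined with compactness of the orthogonal group $O(N)$, a subsequence of $\{\sigma_n\}$ converges to an isometry $\sigma_\infty$. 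Since the isometry group acts continuously on $L^2(\mathbb{R}^N)$, passing to the limit gives $u\circ\sigma_\infty=u^*$, i.e.\ $u=u^*\circ\sigma_\infty^{-1}$, which is radially symmetric and non-increasing about $x_0:=\sigma_\infty(0)$.

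The main obstacle is this final compactness step: a priori the symmetrizing sequence of isometries could escape to infinity, and the point of working with a single super-level set of finite positive measure (rather than with $u$ directly) is exactly to prevent this. The two remaining points that require care, although routine, are the precise statement of the Brock--Solynin approximation — in particular the family from which the half-spaces $H_n$ are drawn — and the verification of the isometry-conjugation identity for the polarization operation.
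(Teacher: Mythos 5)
The paper offers no proof of Lemma \ref{lem4.4}: it is imported verbatim as Proposition 3.15 of \cite{Van Schaftingen-Willem 2008} and Lemma 5.4 of \cite{Moroz-Schaftingen JFA 2013}, so there is no in-paper argument to compare yours against; what you have written is, in substance, a correct reconstruction of the standard proof from those references. The forward implication via the inequality $|x-x_0|\le|\sigma_H(x)-x_0|$ for $x,x_0\in H$ is right, including the degenerate case $x_0\in\partial H$. For the converse, your three ingredients --- the conjugation identity $(w\circ g)^{g^{-1}(H)}=w^H\circ g$, the induction showing each iterated polarization $u^{H_1\cdots H_n}$ equals $u\circ\sigma_n$ for an isometry $\sigma_n$, and the Brock--Solynin/Van Schaftingen approximation of the Schwarz rearrangement by iterated polarizations --- are exactly the mechanism used there, and your compactness step (controlling the translation parts $\sigma_n(0)$ through the overlap of the finite-measure set $\{u>t_0\}$ with $B(\sigma_n(0),\rho)$, then invoking compactness of $O(N)$ and continuity of the isometry action on $L^2(\mathbb{R}^N)$) correctly rules out escape to infinity and closes the argument. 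The two points you flag as needing care are indeed the only ones to nail down: quote the approximation theorem in the form ``there is a sequence of closed half-spaces, each containing the origin, whose iterated polarizations of $u$ converge to $u^*$ in $L^2$'' (the restriction to half-spaces through the origin is harmless, since your induction and the dichotomy hypothesis apply to arbitrary closed half-spaces $\sigma_{n-1}(H_n)$), and justify that $L^2$-convergence gives $|\{u\circ\sigma_n>t_0\}\,\triangle\,\{u^*>t_0\}|\to 0$ at a level $t_0$ with $|\{u^*=t_0\}|=0$, e.g.\ by Chebyshev together with generalized dominated convergence. Both are routine, and the proof is sound.
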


Now we are ready to prove the radial symmetry  result.

\begin{proposition}\label{lem4.6}
Let $N\geq 3$, $\alpha\in (0,N)$, $p=\bar{p}$, $q\in
(2,2+\frac{4}{N})$,  $\mu>0$, $a>0$ and $\mu
a^{\frac{q(1-\gamma_q)}{2}}\leq
(2K)^{\frac{q\gamma_q-2\bar{p}}{2(\bar{p}-1)}}$.  If $u$ is a
positive solution to (\ref{e1.5}) with
$E(u)=\inf_{\mathcal{P}_{a,-}}E(v)$, then there exist $x_0\in
\mathbb{R}^N$ and a non-increasing positive function $v : (0,\infty)
\to \mathbb{R}$ such that $u(x)=v(|x-x_0|)$ for almost every $x\in
\mathbb{R}^N$.
\end{proposition}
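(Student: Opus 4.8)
The plan is to follow the polarization strategy of \cite{Moroz-Schaftingen 2015}, taking advantage of the fact that $E(u)=\inf_{\mathcal{P}_{a,-}}E(v)=M(a)$ is a mountain pass level (Lemma \ref{pro1.3}) and that the fiber‑map analysis of Lemma \ref{lem6.6} is available for \emph{every} element of $S_a$.

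First I would record how polarization acts on the three terms of $\Psi_u$. Fix a closed half-space $H\subset\mathbb{R}^N$, let $\sigma_H$ be the reflection across $\partial H$, and let $u^H$ be the polarization of $u$. Since polarization only redistributes the unordered pair $\{u(x),u(\sigma_H(x))\}$, one has $\|u^H\|_t=\|u\|_t$ for every $t\in[1,\infty)$, so in particular $u^H\in S_a$; by Lemma \ref{lem4.1}, $\|\nabla u^H\|_2=\|\nabla u\|_2$; and since $u>0$ and $s\mapsto s^{\bar p}$ is increasing, $(u^H)^{\bar p}=(u^{\bar p})^H$, so applying Lemma \ref{lem4.2} to $u^{\bar p}\in L^{2N/(N+\alpha)}(\mathbb{R}^N)$ (note $\bar p\cdot\frac{2N}{N+\alpha}=2^*$ and $u\in H^1\hookrightarrow L^{2^*}$) gives
\begin{equation*}
\int_{\mathbb{R}^N}(I_\alpha\ast|u|^{\bar p})|u|^{\bar p}\le \int_{\mathbb{R}^N}(I_\alpha\ast|u^H|^{\bar p})|u^H|^{\bar p},
\end{equation*}
with equality if and only if $u^H=u$ or $u^H=u\circ\sigma_H$. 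Substituting these three facts into (\ref{e6.6}) yields $\Psi_{u^H}(\tau)\le\Psi_u(\tau)$ for all $\tau>0$, and for a fixed $\tau$ equality holds there precisely when the two nonlocal integrals coincide.

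Next I would squeeze the mountain pass value between the two fiber maps. Applying Lemma \ref{lem6.6} to $u^H\in S_a$, the curve $g(t):=(u^H)_{t+\tau_{u^H}^+}$, $t\ge 0$, satisfies $g(0)=(u^H)_{\tau_{u^H}^+}\in V_a\cap E_0$ and $E(g(t))=\Psi_{u^H}(t+\tau_{u^H}^+)\to-\infty$ as $t\to\infty$, hence $g\in\Gamma(a)$; since $\tau_{u^H}^-\ge\tau_{u^H}^+$ is the global maximum point of $\Psi_{u^H}$, and since $u\in\mathcal{P}_{a,-}$ forces $\tau_u^-=1$,
\begin{equation*}
M(a)\le\max_{t\ge 0}E(g(t))=\Psi_{u^H}(\tau_{u^H}^-)\le\Psi_u(\tau_{u^H}^-)\le\Psi_u(\tau_u^-)=\Psi_u(1)=E(u)=M(a).
\end{equation*}
Thus every inequality is an equality; in particular $\Psi_{u^H}(\tau_{u^H}^-)=\Psi_u(\tau_{u^H}^-)$, so by the preceding paragraph the two nonlocal integrals are equal and the equality case of Lemma \ref{lem4.2} gives $u^H=u$ or $u^H=u\circ\sigma_H$. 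As $H$ was an arbitrary closed half-space, Lemma \ref{lem4.4} then produces $x_0\in\mathbb{R}^N$ and a non-increasing $v:(0,\infty)\to\mathbb{R}$ with $u(x)=v(|x-x_0|)$ for a.e.\ $x$; positivity of $v$ is immediate from $u>0$.

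I expect the main obstacle to be the middle step: one must compare the \emph{entire} fiber maps $\Psi_{u^H}$ and $\Psi_u$ and trap $M(a)$ between them, rather than trying to argue directly that $u^H$ (or one of its dilations) is itself a minimizer over $\mathcal{P}_{a,-}$ — the latter is awkward because it is not clear a priori that $(u^H)_{\tau_{u^H}^-}$ lands inside $\mathcal{P}_{a,-}$ rather than on its boundary; routing the argument through an admissible path in $\Gamma(a)$ avoids this entirely. All remaining ingredients (the polarization identities and the structure of $\Psi_u$) are already supplied by Lemmas \ref{lem4.1}, \ref{lem4.2}, \ref{lem4.4}, \ref{lem6.6} and \ref{pro1.3}.
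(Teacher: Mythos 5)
Your proof is correct and follows essentially the same route as the paper: polarization of $u$ combined with the mountain-pass characterization $\inf_{\mathcal{P}_{a,-}}E(v)=M(a)$ from Lemma \ref{pro1.3}, with equality in Lemma \ref{lem4.2} forcing $u^H=u$ or $u^H=u\circ\sigma_H$ and Lemma \ref{lem4.4} concluding. The only cosmetic difference is that the paper polarizes the whole path $t\mapsto u_{t+\tau_1}$ and pins the maximum of the polarized path at $u^H$ using the strict inequality $E(g_u(t))<E(u)$ off the peak, whereas you run the canonical path through $u^H$ itself and sandwich $M(a)$ via $\Psi_{u^H}(\tau_{u^H}^-)\le\Psi_u(\tau_{u^H}^-)\le\Psi_u(\tau_u^-)=E(u)=M(a)$; the two bookkeepings are equivalent.
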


\begin{proof}
By Lemmas \ref{lem3.8} and \ref{lem6.6}, $u\in \mathcal{P}_{a,-}$.
Let $\Gamma(a)$
 be defined in (\ref{e6.8}), $\tau_1\geq 0$ be small enough  such that $u_{\tau_1}\in
 V_a$ and $E(u_{\tau_1})<0$. Then $g_u(t)=u_{t+\tau_1}\in \Gamma(a)$,
 $g_u(\tau_{u}^--\tau_1)=u_{\tau_{u}^-}$, $g_u(t)\geq 0$ for every
 $t\geq 0$, $E(g_u(t))<E(u_{\tau_{u}^-})=E(u)=\inf_{v\in\mathcal{P}_{a,-}}E(v)$
 for any
$t\in \left([0,\infty)\setminus \{\tau_{u}^--\tau_1\}\right)$.

For every closed half-space $H$ define the path $g_u^H : [0, \infty)
\to  S_a$ by $g_u^H(t) = (g_u(t))^H$. By Lemma \ref{lem4.1} and
$\|u^H\|_r=\|u\|_r$ with $r\in [1,\infty)$, we have $g_u^H \in C([0,
\infty),S_a)$. By Lemmas \ref{lem4.1} and \ref{lem4.2}, we obtain
that  $g_u^H(0)\in V_a$ and $E(g_u^H(t))\leq E(g_u(t))$ for every
$t\in [0,\infty)$ and thus $g_u^H \in \Gamma(a)$. Hence,
\begin{equation*}
\max_{t\in[0,\infty)}E(g_u^H(t))\geq
\inf_{v\in\mathcal{P}_{a,-}}E(v).
\end{equation*}
Since for every $t\in \left([0,\infty)\setminus
\{\tau_{u}^--\tau_1\}\right)$,
\begin{equation*}
E(g_u^H(t))\leq E(g_u(t))<E(u)=\inf_{v\in\mathcal{P}_{a,-}}E(v),
\end{equation*}
we deduce that
\begin{equation*}
E\left(g_u^H\left( \tau_{u}^--\tau_1 \right)\right)
=E(u^{H})=\inf_{v\in\mathcal{P}_{a,-}}E(v).
\end{equation*}
Hence $E(u^H)=E(u)$, which implies that
\begin{equation*}
\int_{\mathbb{R}^N}\left(I_\alpha\ast
\left|u^H\right|^{\bar{p}}\right)\left|u^H\right|^{\bar{p}}=\int_{\mathbb{R}^N}(I_\alpha\ast
|u|^{\bar{p}})|u|^{\bar{p}}.
\end{equation*}
By Lemma \ref{lem4.2}, we have $u^H=u$ or $u^H=u\circ \sigma_H$. By
Lemma \ref{lem4.4}, we complete the proof.
\end{proof}

\begin{proposition}\label{pro1.5}
Let $N\geq 3$, $\alpha\in (0,N)$, $p=\bar{p}$,
$q\in(2,2+\frac{4}{N})$,  $\mu>0$, $a>0$, $\mu
a^{\frac{q(1-\gamma_q)}{2}}\leq
(2K)^{\frac{q\gamma_q-2\bar{p}}{2(\bar{p}-1)}}$ and $\alpha\geq N-4$
(i.e., $\bar{p}\geq 2$). If $u$ is a positive solution to
(\ref{e1.5}) with $E(u)=\inf_{\mathcal{P}_{a,-}}E(v)$, then $u$ has
exponential decay at infinity:
\begin{equation*}
u(x)\leq C e^{-\delta |x|},\ \ |x|\geq r_0,
\end{equation*}
for some $C>0$, $\delta>0$ and  $r_0>0$.
\end{proposition}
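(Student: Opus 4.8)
The plan is to recast the equation for $u$ as a linear Schr\"odinger inequality $-\Delta u+\tfrac{|\lambda|}{2}u\le 0$ outside a large ball and then to dominate $u$ by an exponentially decaying supersolution, following \cite{Berestycki-Lions 1983}. First I would collect the available information: by Proposition \ref{lem4.6}, after a translation $u=v(|\cdot|)$ with $v$ non-increasing; by elliptic regularity (cf.\ Theorem 2.1 in \cite{Li-Ma 2020}) $u$ is continuous, and since $u\in L^2$, Lemma \ref{lem jx} with $t=2$ gives $u(x)\le C|x|^{-N/2}\|u\|_2\to 0$ as $|x|\to\infty$, so $u$ is bounded as well; by Lemmas \ref{lem3.8} and \ref{lem6.6}, $u\in\mathcal{P}_{a,-}$, and by Lemma \ref{lem3.3}, $u$ solves (\ref{e1.5}) with some $\lambda<0$. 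Writing
\[
-\Delta u=\big(\lambda+W(x)\big)u,\qquad W(x):=\big(I_\alpha\ast|u|^{\bar{p}}\big)(x)\,|u(x)|^{\bar{p}-2}+\mu|u(x)|^{q-2}\ge 0,
\]
everything reduces to showing $W(x)\to 0$ as $|x|\to\infty$.

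This is the heart of the argument and is where $\alpha\ge N-4$ (i.e.\ $\bar{p}\ge 2$) is used. Since then $\bar{p}\in[2,2^*]$, the Sobolev embedding gives $u\in L^{\bar{p}}(\mathbb{R}^N)$, the factor $|u(x)|^{\bar{p}-2}$ is bounded (as $u$ is), and $|u(x)|^{q-2}\to 0$ because $q>2$ and $u(x)\to 0$; so it remains to prove $(I_\alpha\ast|u|^{\bar{p}})(x)\to 0$. For $|x|>2$ I would split this integral over $\{|y|\le|x|/2\}$, $\{|y|>|x|/2,\ |x-y|\le 1\}$ and $\{|y|>|x|/2,\ |x-y|>1\}$, bounding the three pieces respectively by $A_\alpha(N)(|x|/2)^{\alpha-N}\|u\|_{\bar{p}}^{\bar{p}}$ (since $|x-y|\ge|x|/2$ on the first set), by $A_\alpha(N)\,v(|x|-1)^{\bar{p}}\!\int_{|z|\le 1}|z|^{\alpha-N}dz$ (since $u(y)\le v(|x|-1)$ there), and by $A_\alpha(N)\!\int_{|y|>|x|/2}|u(y)|^{\bar{p}}dy$ (since $|x-y|^{\alpha-N}\le 1$); each tends to $0$ as $|x|\to\infty$ by $\alpha<N$, $v(r)\to 0$, and $u\in L^{\bar{p}}$. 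Hence $W(x)\to 0$. If $\bar{p}<2$ the term $|u|^{\bar{p}-2}u$ is singular precisely where $u\to 0$, so this route collapses; this is why the hypothesis cannot simply be dropped here.

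For the comparison, fix $R_0>0$ with $W(x)\le|\lambda|/2$ for $|x|\ge R_0$, so that $-\Delta u+\tfrac{|\lambda|}{2}u\le 0$ on $\{|x|\ge R_0\}$. Taking $\delta=\sqrt{|\lambda|/2}$ and using $\Delta e^{-\delta|x|}=\big(\delta^2-(N-1)\delta/|x|\big)e^{-\delta|x|}$, the function $\psi(x):=\big(\max_{|x|=R_0}u\big)\,e^{\delta(R_0-|x|)}$ satisfies $-\Delta\psi+\tfrac{|\lambda|}{2}\psi\ge 0$ for $|x|>0$, $\psi\ge u$ on $\{|x|=R_0\}$, and $\psi,u\to 0$ at infinity. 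Applying the maximum principle to $u-\psi$ on the exterior domain $\{|x|>R_0\}$ (valid since $u-\psi$ vanishes at infinity, as in \cite{Berestycki-Lions 1983}) gives $u\le\psi$ there, that is $u(x)\le C e^{-\delta|x|}$ for $|x|\ge R_0$ with $C=\big(\max_{|x|=R_0}u\big)e^{\delta R_0}$.

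The step I expect to be the main obstacle is the uniform-in-$|x|$ decay of the nonlocal factor $(I_\alpha\ast|u|^{\bar{p}})(x)$ and its interplay with the factor $|u|^{\bar{p}-2}$ (which is only harmless thanks to $\bar{p}\ge 2$); once $W(x)\to 0$ is established, the exterior maximum principle is routine.
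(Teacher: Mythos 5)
Your proposal is correct and follows essentially the same route as the paper: establish that the nonlocal coefficient $(I_\alpha\ast|u|^{\bar p})$ decays at infinity by splitting the convolution into regions and exploiting the radial non-increasing profile (the paper does this via the estimate (\ref{e8.2}) with Lemma \ref{lem jx}), absorb $|u|^{\bar p-2}$ using $\bar p\ge 2$, and then conclude by the Berestycki--Lions comparison argument, which you write out explicitly where the paper simply cites Lemma 2 of \cite{Berestycki-Lions 1983}.
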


\begin{proof}
By Lemmas \ref{lem3.8} and \ref{lem3.3}, there exists $\lambda<0$
such that $u$ satisfies the equation
\begin{equation}\label{e8.14}
-\Delta u=\lambda
u+(I_\alpha\ast|u|^{\bar{p}})|u|^{\bar{p}-2}u+\mu|u|^{q-2}u.
\end{equation}
By Proposition \ref{lem4.6}, there exist $x_0\in \mathbb{R}^N$ and a
non-increasing positive function $v : (0,\infty) \to \mathbb{R}$
such that $u(x)=v(|x-x_0|)$ for almost every $x\in \mathbb{R}^N$.
Hence, $w:=u(x+x_0)$ is a positive and radially non-increasing
solution to (\ref{e8.14}). Similarly to the estimate of
(\ref{e8.2}), there exists $r_0>0$ such that
\begin{equation*}
(I_\alpha\ast|w|^{\bar{p}})(x)=C\int_{\mathbb{R}^N}\frac{|w(x-z)|^{\bar{p}}}{|z|^{N-\alpha}}dz\leq
-\frac{\lambda}{2}
\end{equation*}
for $|x|>r_0$. Hence, if $\bar{p}>2$, there exists $C>0$ such that
$w$ satisfies
\begin{equation*}
-\Delta w\leq \lambda w +C w^{\bar{p}-2}w+\mu  w^{q-2}w,\ \ |x|\geq
r_0,
\end{equation*}
and  if $\bar{p}=2$, $w$ satisfies
\begin{equation*}
-\Delta w\leq \frac{\lambda}{2} w +\mu  w^{q-2}w,\ \ \ \ |x|\geq
r_0.
\end{equation*}
Now, repeating word by word the proof of Lemma 2 in
\cite{Berestycki-Lions 1983}, we can show that $w$ decays
exponentially at infinity. The proof is complete.
\end{proof}

\textbf{Proof of Theorem \ref{thm1.3}}. By Proposition \ref{lem4.5},
$w:=|u|_{\tau_{|u|}^-}\in \mathcal{P}_{a,-}$ is a positive solution
to (\ref{e1.5}) with $E(w)=\inf_{v\in \mathcal{P}_{a,-}}E(v)$.
Hence, $w$ has exponential decay at infinity by Proposition
\ref{pro1.5}, and by Proposition  \ref{lem4.6}, there exist $x_0\in
\mathbb{R}^N$ and a non-increasing positive function $v : (0,\infty)
\to \mathbb{R}$ such that $w=v(|x-x_0|)$ for almost every $x\in
\mathbb{R}^N$. The proof is complete by using the fact
$|u|(x)=(\tau_{|u|}^-)^{-N/2}w\left(\frac{x}{\tau_{|u|}^-}\right)$.

\section{Dynamical studies to the equation (\ref{e1.2})}

\setcounter{section}{6} \setcounter{equation}{0}

In this section, we first study the local existence, global
existence, and the finite time blow up to the Cauchy problem
(\ref{e1.2}), and then study the stability and instability of the
standing waves obtained in Sections 3 and 4.

\subsection{Local existence}

In this subsection, we consider the local existence to the Cauchy
problem
\begin{equation}\label{e5.1}
\begin{cases}
i\partial_t\varphi+\Delta
\varphi+(I_{\alpha}\ast|\varphi|^{\bar{p}})|\varphi|^{\bar{p}-2}\varphi+\mu|\varphi|^{q-2}\varphi,
\quad (t,x)\in
\mathbb{R}\times\mathbb{R}^{N},\\
\varphi(0,x)=\varphi_0(x)\in H^1(\mathbb{R}^N),\quad x\in
\mathbb{R}^{N}.
\end{cases}
\end{equation}

\begin{definition}\label{def5.1}
Let $N\geq 3$. The pair $(p, r)$ is said to be Schr\"{o}dinger
admissible, for short $(p, r)\in S$, if
\begin{equation*}
\frac{2}{p}+\frac{N}{r}=\frac{N}{2},\ p, r\in [2,\infty].
\end{equation*}
\end{definition}

Define
\begin{equation}\label{e5.2}
(p_1,r_1):=\left(\frac{2(N+\alpha)}{N-2}=2\bar{p},\frac{2N(N+\alpha)}{N\alpha+4+N^2-2N}\right)
\end{equation}
and
\begin{equation}\label{e5.3}
(p_2,r_2):=\left(\frac{4q}{(q-2)(N-2)},\frac{Nq}{q+N-2}\right).
\end{equation}
Then  $(p_1,r_1)$, $(p_2,r_2)\in S$ by direct calculation. For such
defined admissible pairs, we define the spaces
$Y_T:=Y_{p_1,r_1,T}\cap Y_{p_2,r_2,T}$ and $X_T:=X_{p_1,r_1,T}\cap
X_{p_2,r_2,T}$ equipped with the following norms:
\begin{equation}\label{e5.4}
\|\psi\|_{Y_T}=\|\psi\|_{Y_{p_1,r_1,T}}+\|\psi\|_{Y_{p_2,r_2,T}}\
\mathrm{and}\
\|\psi\|_{X_T}=\|\psi\|_{X_{p_1,r_1,T}}+\|\psi\|_{X_{p_2,r_2,T}},
\end{equation}
where, for any $p,\, r\in (1,\infty)$,
\begin{equation*}
\|\psi(t,x)\|_{Y_{p,r,T}}:=\left(\int_{0}^{T}\|\psi(t,\cdot)\|_r^pdt\right)^{1/p}
\end{equation*}
and
\begin{equation*}
\|\psi(t,x)\|_{X_{p,r,T}}:=\left(\int_{0}^{T}\|\psi(t,\cdot)\|_{W^{1,r}}^pdt\right)^{1/p}.
\end{equation*}

\begin{definition}\label{def5.2}
Let $T>0$. We say that $\varphi(t,x)$ is an integral solution of the
Cauchy problem (\ref{e5.1}) on the time interval $[0,T]$ if
$\varphi\in C([0,T],H^1(\mathbb{R}^N))\cap X_{T}$ and
$\varphi(t,x)=e^{it\Delta}\varphi_0(x)-i\int_{0}^{t}e^{i(t-s)\Delta}g(\varphi(s,x))ds$
for all $t\in(0,T)$, where $g(\varphi):=g_1(\varphi)+g_2(\varphi)$,
$g_1(\varphi):=(I_{\alpha}\ast|\varphi|^{\bar{p}})|\varphi|^{\bar{p}-2}\varphi$
and $g_2(\varphi):=\mu|\varphi|^{q-2}\varphi$.
\end{definition}

Let us recall Strichartz's estimates that will be useful in the
sequel (see for example (\cite{Cazenave 2003}, Theorem 2.3.3 and
Remark 2.3.8) and \cite{Keel-Tao 1998} for the endpoint estimates).

\begin{lemma}\label{lem5.1}
Let $N\geq 3$, $(p,r)$ and $(\tilde{p},\tilde{r})\in S$. Then there
exists a constant $C>0$ such that for any $T>0$, the following
properties hold:

(i) For any $u\in L^2(\mathbb{R}^N)$, the function $t\mapsto
e^{it\Delta}u$ belongs to $Y_{p,r,T}\cap C([0,T],L^2(\mathbb{R}^N))$
and $\|e^{it\Delta}u\|_{Y_{p,r,T}}\leq C\|u\|_2$.

(ii) Let $F(t,x)\in Y_{\tilde{p}',\tilde{r}',T}$, where we use a
prime to denote conjugate indices. Then the function
\begin{equation*}
t\mapsto \Phi_F(t,x):=\int_{0}^te^{i(t-s)\Delta}F(s,x)ds
\end{equation*}
belongs to $Y_{p,r,T}\cap C([0,T ],L^2(\mathbb{R}^N))$ and
$\|\Phi_F\|_{Y_{p,r,T}}\leq C\|F\|_{Y_{\tilde{p}',\tilde{r}',T}}$.

(iii) For every $u \in H^1(\mathbb{R}^N)$, the function $t\mapsto
e^{it\Delta}u$ belongs to $X_{p,r,T} \cap
C([0,T],H^1(\mathbb{R}^N))$ and $\|e^{it\Delta}u\|_{X_{p,r,T}}\leq
C\|u\|_{H^1}$.
\end{lemma}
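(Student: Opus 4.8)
The plan is to deduce the three assertions from the classical Strichartz estimates for the free Schr\"{o}dinger group, so the argument is essentially a reduction to known results. First I would recall the two elementary ingredients: the $L^2$ isometry $\|e^{it\Delta}u\|_2=\|u\|_2$ and the dispersive bound $\|e^{it\Delta}u\|_\infty\leq C|t|^{-N/2}\|u\|_1$; interpolating between them yields $\|e^{it\Delta}u\|_r\leq C|t|^{-N(1/2-1/r)}\|u\|_{r'}$ for every $r\in[2,\infty]$.

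For (i) and (ii) I would then invoke the $TT^{*}$ method: the operator $F\mapsto\int e^{i(t-s)\Delta}F(s)\,ds$ maps $Y_{\tilde{p}',\tilde{r}',T}$ into $Y_{p,r,T}$ for all Schr\"{o}dinger-admissible pairs $(p,r)$ and $(\tilde{p},\tilde{r})$, and $e^{it\Delta}$ maps $L^2(\mathbb{R}^N)$ into $Y_{p,r,T}$. For non-endpoint pairs this is the standard Hardy--Littlewood--Sobolev/$TT^{*}$ computation as in (\cite{Cazenave 2003}, Theorem 2.3.3 and Remark 2.3.8), while the endpoint pair $(p,r)=(2,2N/(N-2))$, admissible since $N\geq 3$, is covered by \cite{Keel-Tao 1998}. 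The inhomogeneous statement in (ii) with mismatched admissible pairs follows either directly from the Keel--Tao bilinear estimate or, for non-endpoint exponents, by combining the homogeneous estimate with the Christ--Kiselev lemma; the continuity in $C([0,T],L^2(\mathbb{R}^N))$ is then obtained by a density argument once the space-time bounds are available.

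Finally, (iii) follows from (i) by commuting derivatives with the propagator. Since the Fourier multiplier symbol $e^{-it|\xi|^2}$ commutes with multiplication by $\xi$, one has $\nabla e^{it\Delta}u=e^{it\Delta}\nabla u$; applying (i) to $u$ and to each component $\partial_j u\in L^2(\mathbb{R}^N)$ and summing the resulting bounds in $Y_{p,r,T}$ gives $\|e^{it\Delta}u\|_{X_{p,r,T}}\leq C\|u\|_{H^1}$, together with $e^{it\Delta}u\in C([0,T],H^1(\mathbb{R}^N))$. The only genuinely delicate point is the endpoint Strichartz estimate, which is precisely why \cite{Keel-Tao 1998} is cited alongside \cite{Cazenave 2003}; everything else is routine, so no further details are needed.
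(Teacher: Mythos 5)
Your proposal is correct and is exactly the standard argument behind the classical Strichartz estimates that the paper itself does not prove but simply recalls by citing (\cite{Cazenave 2003}, Theorem 2.3.3 and Remark 2.3.8) and \cite{Keel-Tao 1998} for the endpoint; your reduction via the dispersive bound, the $TT^{*}$ method, Keel--Tao at the endpoint, and commuting $\nabla$ with $e^{it\Delta}$ for part (iii) is precisely the content of those references. Nothing further is needed.
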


\begin{lemma}\label{lem5.2}
Let $N\geq 3$, $\alpha\in (0,N)$, $p=\bar{p}$,  $\alpha\geq N-4$
(i.e., $\bar{p}\geq 2$), $\alpha< N-2$, $(p_1,r_1)$ be defined in
(\ref{e5.2}) and $g_1(\varphi)$ be defined in Definition
\ref{def5.2}. Then for every $(\tilde{p},\tilde{r})\in S$ there
exists a constant $C > 0$ such that for every $T > 0$,
\begin{equation}\label{e5.10}
\left\|\int_{0}^{t}e^{i(t-s)\Delta}[\nabla
g_1(\varphi(s))]ds\right\|_{Y_{\tilde{p},\tilde{r},T}}\leq C\|\nabla
\varphi\|_{Y_{p_1,r_1,T}}^{2\bar{p}-1}
\end{equation}
and
\begin{equation}\label{e5.11}
\begin{split}
&\left\|\int_{0}^{t}e^{i(t-s)\Delta}[g_1(\varphi(s))-g_1(\psi(s))]ds\right\|_{Y_{\tilde{p},\tilde{r},T}}\\
&\qquad \qquad \leq C(\|\nabla
\varphi\|_{Y_{p_1,r_1,T}}^{2\bar{p}-2}+\|\nabla
\psi\|_{Y_{p_1,r_1,T}}^{2\bar{p}-2})\|\varphi-\psi\|_{Y_{p_1,r_1,T}}.
\end{split}
\end{equation}
\end{lemma}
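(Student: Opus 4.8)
The plan is to treat both (\ref{e5.10}) and (\ref{e5.11}) by combining the dual (retarded) Strichartz estimate of Lemma \ref{lem5.1}(ii), applied with the admissible pair $(p_1,r_1)$ of (\ref{e5.2}) playing the role of the source pair, with a pointwise (Leibniz) bound on the nonlocal nonlinearity, followed by H\"older's inequality in the space variable, the Hardy--Littlewood--Sobolev inequality, the Sobolev inequality $\|\phi\|_{L^{r_1^*}}\lesssim\|\nabla\phi\|_{L^{r_1}}$ with $\frac{1}{r_1^*}=\frac{1}{r_1}-\frac{1}{N}$ (valid since $r_1<N$, which holds for all $N\geq3$, $\alpha>0$), and finally H\"older's inequality in time.

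By Lemma \ref{lem5.1}(ii) it suffices to prove $\|\nabla g_1(\varphi)\|_{Y_{p_1',r_1',T}}\lesssim\|\nabla\varphi\|_{Y_{p_1,r_1,T}}^{2\bar{p}-1}$, together with the analogous bound for $g_1(\varphi)-g_1(\psi)$ in place of $\nabla g_1(\varphi)$. Since $\bar{p}\geq2$ — this is precisely where $\alpha\geq N-4$ is used, as $z\mapsto|z|^{\bar{p}-2}z$ would otherwise fail to be $C^1$ — the Leibniz rule gives
\[
|\nabla g_1(\varphi)|\lesssim\bigl(I_\alpha\ast(|\varphi|^{\bar{p}-1}|\nabla\varphi|)\bigr)|\varphi|^{\bar{p}-1}+\bigl(I_\alpha\ast|\varphi|^{\bar{p}}\bigr)|\varphi|^{\bar{p}-2}|\nabla\varphi|.
\]
Fixing $t$ and taking the $L^{r_1'}$ norm of each term, one estimates by H\"older, placing the factor carrying the derivative in $L^{r_1}$ and each remaining factor of $\varphi$ in $L^{r_1^*}$, and treating the convolution by Hardy--Littlewood--Sobolev. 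A direct check of exponents shows $\|\nabla g_1(\varphi)(t)\|_{L^{r_1'}}\lesssim\|\nabla\varphi(t)\|_{L^{r_1}}\|\varphi(t)\|_{L^{r_1^*}}^{2\bar{p}-2}$; in the second term the convolution acts on a function in $L^{2N/(N-2+\alpha)}$, and the requirement that this exponent lie in the admissible Hardy--Littlewood--Sobolev range $(1,N/\alpha)$ is exactly the hypothesis $\alpha<N-2$ (the first term's input lies in $L^{2N/(N+\alpha)}$, which is always admissible). Applying the Sobolev inequality to each of the $2\bar{p}-2$ remaining factors gives $\|\nabla g_1(\varphi)(t)\|_{L^{r_1'}}\lesssim\|\nabla\varphi(t)\|_{L^{r_1}}^{2\bar{p}-1}$.

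Integrating in $t$ over $[0,T]$ and using H\"older in time with the identity $\frac{1}{p_1'}=\frac{2\bar{p}-1}{p_1}$ (which holds because $p_1=2\bar{p}$) yields $\|\nabla g_1(\varphi)\|_{Y_{p_1',r_1',T}}\lesssim\|\nabla\varphi\|_{Y_{p_1,r_1,T}}^{2\bar{p}-1}$ with no power of $T$ lost, hence (\ref{e5.10}). For (\ref{e5.11}) one starts instead from the telescoping identity $g_1(\varphi)-g_1(\psi)=\bigl(I_\alpha\ast(|\varphi|^{\bar{p}}-|\psi|^{\bar{p}})\bigr)|\varphi|^{\bar{p}-2}\varphi+\bigl(I_\alpha\ast|\psi|^{\bar{p}}\bigr)\bigl(|\varphi|^{\bar{p}-2}\varphi-|\psi|^{\bar{p}-2}\psi\bigr)$, combined with $\bigl||\varphi|^{\bar{p}}-|\psi|^{\bar{p}}\bigr|\lesssim(|\varphi|^{\bar{p}-1}+|\psi|^{\bar{p}-1})|\varphi-\psi|$ and $\bigl||\varphi|^{\bar{p}-2}\varphi-|\psi|^{\bar{p}-2}\psi\bigr|\lesssim(|\varphi|^{\bar{p}-2}+|\psi|^{\bar{p}-2})|\varphi-\psi|$ (again using $\bar{p}\geq2$); placing $|\varphi-\psi|$ in $L^{r_1}$ — carrying no derivative, hence appearing with the $Y$-norm on the right-hand side — and the remaining $2\bar{p}-2$ factors of $\varphi,\psi$ in $L^{r_1^*}$, the same Hardy--Littlewood--Sobolev / Sobolev / H\"older scheme, now using $\frac{1}{p_1'}=\frac{2\bar{p}-2}{p_1}+\frac{1}{p_1}$ in time, produces the asserted bound (the mixed $\varphi$--$\psi$ terms being absorbed into $\|\nabla\varphi\|^{2\bar{p}-2}+\|\nabla\psi\|^{2\bar{p}-2}$).

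The only genuinely delicate point throughout is the exponent bookkeeping: one must verify that the three (respectively four) H\"older exponents coming from the convolution and the product sum precisely to $1/r_1'$ and that every intermediate exponent lies in the ranges required by Hardy--Littlewood--Sobolev and by the Sobolev embedding. This is exactly where the specific choice of $(p_1,r_1)$ in (\ref{e5.2}) and the standing restrictions $N-4\leq\alpha<N-2$ are consumed, and it is the step deserving the most care.
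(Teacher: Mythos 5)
Your proposal is correct and follows essentially the same route as the paper: the same Leibniz-type pointwise bound splitting $\nabla g_1$ into the two terms $(I_\alpha\ast|\varphi|^{\bar p})|\varphi|^{\bar p-2}\nabla\varphi$ and $(I_\alpha\ast(|\varphi|^{\bar p-1}\nabla\varphi))|\varphi|^{\bar p-2}\varphi$, the same telescoping for $g_1(\varphi)-g_1(\psi)$, the same H\"older/Hardy--Littlewood--Sobolev/Sobolev chain at fixed time followed by integration in $t$ and the dual Strichartz estimate, and the same identification of where $\bar p\ge 2$ and $\alpha<N-2$ enter (the latter exactly as the admissibility of the exponent $2N/(N-2+\alpha)$ for the Riesz convolution). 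The only difference is that the paper writes out the auxiliary exponents $a_1,q_1,a_2,q_2$ explicitly and treats $\bar p=2$ as a separate case with $q_1=\infty$, whereas you leave the exponent bookkeeping as a "direct check"; the identities you do state are the correct ones.
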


\begin{proof}
By using
\begin{equation*}
|\nabla (|\varphi|^{\bar{p}})|\lesssim |\varphi|^{\bar{p}-1}|\nabla
\varphi|\ \ \mathrm{and}\ \
|\nabla(|\varphi|^{\bar{p}-2}\varphi)|\lesssim
|\varphi|^{\bar{p}-2}|\nabla \varphi|,
\end{equation*}
we obtain that
\begin{equation}\label{e5.9}
\begin{split}
|\nabla g_1(\varphi)|&\lesssim
\left|(I_\alpha\ast|\varphi|^{\bar{p}})|\varphi|^{\bar{p}-2}\nabla\varphi\right|
+\left|(I_\alpha\ast[|\varphi|^{\bar{p}-1}\nabla
\varphi])|\varphi|^{\bar{p}-2}\varphi\right|\\
&:=I_1+I_2.
\end{split}
\end{equation}
By using
\begin{equation*}
\left||\varphi|^{\bar{p}}-|\psi|^{\bar{p}}\right|\lesssim\left(|\varphi|+|\psi|\right)^{\bar{p}-1}|\varphi-\psi|
\end{equation*}
and
\begin{equation*}
\left||\varphi|^{\bar{p}-2}\varphi-|\psi|^{\bar{p}-2}\psi\right|\lesssim\left(|\varphi|^{\bar{p}-2}+|\psi|^{\bar{p}-2}\right)|\varphi-\psi|,
\end{equation*}
we obtain that
\begin{equation*}
\begin{split}
|g_1(\varphi)-g_1(\psi)|
&\leq \left|(I_\alpha\ast|\varphi|^{\bar{p}})(|\varphi|^{\bar{p}-2}\varphi-|\psi|^{\bar{p}-2}\psi)\right|\\
&\qquad\qquad \qquad +\left|(I_\alpha\ast|\varphi|^{\bar{p}}-I_\alpha\ast|\psi|^{\bar{p}})|\psi|^{\bar{p}-2}\psi\right|\\
&\lesssim
\left|(I_\alpha\ast|\varphi|^{\bar{p}})[\left(|\varphi|^{\bar{p}-2}+|\psi|^{\bar{p}-2}\right)|\varphi-\psi|]\right|\\
&\qquad\qquad \qquad +\left|(I_\alpha\ast
[\left(|\varphi|+|\psi|\right)^{\bar{p}-1}|\varphi-\psi|])|\psi|^{\bar{p}-2}\psi\right|\\
&:=I_3+I_4.
\end{split}
\end{equation*}

Case $\bar{p}>2$. Set
\begin{equation*}
a_1=\frac{2N}{N-2-\alpha}\ \ \mathrm{and}\ \
q_1=\frac{2N(N+\alpha)}{(\alpha+4-N)(N-2+\alpha)},
\end{equation*}
then
\begin{equation*}
\frac{Na_1}{N+\alpha a_1}\in \left(1,\frac{N}{\alpha}\right),
\frac{Na_1}{N+\alpha
a_1}\bar{p}=(\bar{p}-2)q_1=r_1^*:=\frac{Nr_1}{N-r_1},
\frac{1}{r_1'}=\frac{1}{a_1}+\frac{1}{q_1}+\frac{1}{r_1}.
\end{equation*}
By using the  H\"{o}lder inequality, the Hardy-Littlewood-Sobolev
inequality and the Sobolev embedding
$W^{1,r_1}(\mathbb{R}^N)\hookrightarrow L^{r_1^*}(\mathbb{R}^N)$, we
have
\begin{equation}\label{e5.7}
\begin{split}
\|I_1\|_{r_1'}
&\leq\|I_\alpha\ast|\varphi|^{\bar{p}}\|_{a_1}\||\varphi|^{\bar{p}-2}\|_{q_1}\|\nabla \varphi\|_{r_1}\\
&\lesssim \||\varphi|^{\bar{p}}\|_{\frac{Na_1}{N+\alpha a_1}}\|\varphi\|_{(\bar{p}-2)q_1}^{\bar{p}-2}\|\nabla \varphi\|_{r_1}\\
&=\|\varphi\|_{\frac{Na_1}{N+\alpha a_1}\bar{p}}^{\bar{p}}\|\varphi\|_{(\bar{p}-2)q_1}^{\bar{p}-2}\|\nabla \varphi\|_{r_1}\\
&\lesssim \|\nabla \varphi\|_{r_1}^{2\bar{p}-1}.
\end{split}
\end{equation}
Set
\begin{equation*}
a_2=\frac{2N}{N-\alpha}\ \ \mathrm{and}\ \
q_2=\frac{2N(N+\alpha)}{(\alpha+2)(N-2+\alpha)},
\end{equation*}
then
\begin{equation*}
\frac{1}{r_1'}=\frac{1}{a_2}+\frac{1}{q_2},\ \frac{Na_2}{N+\alpha
a_2}\in \left(1,\frac{N}{\alpha}\right),\
\frac{1}{\frac{Na_2}{N+\alpha a_2}}=\frac{1}{r_1}+\frac{1}{q_2}\ \
\mathrm{and}\ \ (\bar{p}-1)q_2=r_1^*.
\end{equation*}
By using the  H\"{o}lder inequality, the Hardy-Littlewood-Sobolev
inequality and the Sobolev embedding
$W^{1,r_1}(\mathbb{R}^N)\hookrightarrow L^{r_1^*}(\mathbb{R}^N)$, we
have
\begin{equation}\label{e5.8}
\begin{split}
\|I_2\|_{r_1'}
&\leq\|I_\alpha\ast(|\varphi|^{\bar{p}-1}\nabla \varphi)\|_{a_2}\||\varphi|^{\bar{p}-2}\varphi\|_{q_2}\\
&\lesssim \||\varphi|^{\bar{p}-1}\nabla \varphi\|_{\frac{Na_2}{N+\alpha a_2}}\|\varphi\|_{(\bar{p}-1)q_2}^{\bar{p}-1}\\
&\lesssim \||\varphi|^{\bar{p}-1}\|_{q_2}\|\nabla
\varphi\|_{r_1}\|\varphi\|_{(\bar{p}-1)q_2}^{\bar{p}-1}\\
&=\|\varphi\|_{(\bar{p}-1)q_2}^{\bar{p}-1}\|\varphi\|_{(\bar{p}-1)q_2}^{\bar{p}-1}\|\nabla
\varphi\|_{r_1}\\
&\lesssim \|\nabla \varphi\|_{r_1}^{2\bar{p}-1}.
\end{split}
\end{equation}
By using (\ref{e5.9}), (\ref{e5.7}) and  (\ref{e5.8}), we have
\begin{equation*}
\begin{split}
\|\nabla
g_1(\varphi)\|_{Y_{p_1',r_1',T}}&=\left(\int_{0}^{T}\|\nabla
g_1(\varphi(t)) \|_{r_1'}^{p_1'}dt\right)^{\frac{1}{p_1'}}\\
&\lesssim \left(\int_{0}^{T}\|\nabla
\varphi(t) \|_{r_1}^{(2\bar{p}-1)p_1'}dt\right)^{\frac{1}{p_1'}}\\
&=\|\nabla \varphi\|_{Y_{p_1,r_1,T}}^{2\bar{p}-1}.
\end{split}
\end{equation*}
Hence, by Lemma \ref{lem5.1} (ii), we obtain that
\begin{equation*}
\left\|\int_{0}^{t}e^{i(t-s)\Delta}[\nabla
g_1(\varphi(s))]ds\right\|_{Y_{\tilde{p},\tilde{r},T}}\lesssim\|\nabla
g_1(\varphi)\|_{Y_{p_1',r_1',T}}\lesssim\|\nabla
\varphi\|_{Y_{p_1,r_1,T}}^{2\bar{p}-1};
\end{equation*}
that is, (\ref{e5.10}) holds.

Similarly to (\ref{e5.7}) and (\ref{e5.8}), we obtain that
\begin{equation}\label{e5.12}
\|I_3\|_{r_1'}\lesssim
\left(\|\varphi\|_{r_1^*}+\|\psi\|_{r_1^*}\right)^{2\bar{p}-2}\|\varphi-\psi\|_{r_1}
\end{equation}
and
\begin{equation}\label{e5.13}
\|I_4\|_{r_1'}\lesssim
\left(\|\varphi\|_{r_1^*}+\|\psi\|_{r_1^*}\right)^{2\bar{p}-2}\|\varphi-\psi\|_{r_1}.
\end{equation}
By using the H\"{o}lder inequality, we have
\begin{equation*}
\begin{split}
&\|I_3\|_{Y_{p_1',r_1',T}}\\
&\lesssim
\left(\int_{0}^{T}(\|\varphi(t)\|_{r_1^*}
+\|\psi(t)\|_{r_1^*})^{(2\bar{p}-2)p_1'}\|\varphi(t)-\psi(t)\|_{r_1}^{p_1'}dt\right)^{\frac{1}{p_1'}}\\
& \lesssim
\left(\int_0^T(\|\varphi(t)\|_{r_1^*}+\|\psi(t)\|_{r_1^*})^{p_1}dt\right)^{\frac{(2\bar{p}-2)p_1'}{p_1}\frac{1}{p_1'}}
\left(\int_{0}^T\|\varphi(t)-\psi(t)\|_{r_1}^{p_1}dt\right)^{\frac{p_1'}{p_1}\frac{1}{p_1'}}\\
&\lesssim (\|\nabla \varphi\|_{Y_{p_1,r_1,T}}+\|\nabla
\psi\|_{Y_{p_1,r_1,T}})^{2\bar{p}-2}\|\varphi(t)-\psi(t)\|_{Y_{p_1,r_1,T}}.
\end{split}
\end{equation*}
Hence, by Lemma \ref{lem5.1} (ii), we obtain  (\ref{e5.11}) holds.\\

Case $\bar{p}=2$. Similarly to case $\bar{p}>2$, just in the
estimate of $\|I_1\|_{r_1'}$ and $\|I_3\|_{r_1'}$ by choosing
$q_1=\infty$ and $a_1=\frac{2N}{N-2-\alpha}$, we have (\ref{e5.7}),
(\ref{e5.8}), (\ref{e5.12}) and (\ref{e5.13}) hold and then
(\ref{e5.10}) and (\ref{e5.11}) hold. The proof is complete.
\end{proof}

The following lemma is cited from \cite{JEANJEAN-JENDREJ}.

\begin{lemma}\label{lem5.3}
Let $N\geq 3$, $q\in(2,2^*)$,  $(p_2,r_2)$ be defined in
(\ref{e5.3}) and $g_2(\varphi)$ be defined in Definition
\ref{def5.2}. Then for every $(\tilde{p},\tilde{r})\in S$ there
exists a constant $C > 0$ such that for every $T > 0$,
\begin{equation*}
\left\|\int_{0}^{t}e^{i(t-s)\Delta}[\nabla
g_2(\varphi(s))]ds\right\|_{Y_{\tilde{p},\tilde{r},T}}\leq
CT^{\frac{(N-2)(2^*-q)}{4}}\|\nabla \varphi\|_{Y_{p_2,r_2,T}}^{q-1}
\end{equation*}
and
\begin{equation*}
\begin{split}
&\left\|\int_{0}^{t}e^{i(t-s)\Delta}[g_2(\varphi(s))-g_2(\psi(s))]ds\right\|_{Y_{\tilde{p},\tilde{r},T}}\\
&\qquad \qquad \leq CT^{\frac{(N-2)(2^*-q)}{4}}(\|\nabla
\varphi\|_{Y_{p_2,r_2,T}}^{q-2}+\|\nabla
\psi\|_{Y_{p_2,r_2,T}}^{q-2})\|\varphi-\psi\|_{Y_{p_2,r_2,T}}.
\end{split}
\end{equation*}
\end{lemma}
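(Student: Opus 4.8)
The plan is to follow verbatim the scheme of the proof of Lemma \ref{lem5.2}, replacing the admissible pair $(p_1,r_1)$ by $(p_2,r_2)$ and deleting the Riesz--potential factor, so that all the estimates reduce to those for the pure power nonlinearity $|\varphi|^{q-2}\varphi$; the one genuinely new feature is that, because $q$ is subcritical ($q<2^*$), a positive power of $T$ will be produced by a H\"older estimate in the time variable. Concretely, I would first invoke the inhomogeneous Strichartz estimate (Lemma \ref{lem5.1}(ii)) with the admissible pair $(p_2,r_2)\in S$: for every $(\tilde p,\tilde r)\in S$,
\[
\Big\|\int_{0}^{t}e^{i(t-s)\Delta}[\nabla g_2(\varphi(s))]\,ds\Big\|_{Y_{\tilde p,\tilde r,T}}\lesssim \|\nabla g_2(\varphi)\|_{Y_{p_2',r_2',T}},
\]
and the same estimate with $\nabla g_2(\varphi)$ replaced by $g_2(\varphi)-g_2(\psi)$. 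Hence it suffices to bound the $Y_{p_2',r_2',T}$--norms on the right, for which I would use the pointwise inequalities $|\nabla g_2(\varphi)|\lesssim |\varphi|^{q-2}|\nabla\varphi|$ and $|g_2(\varphi)-g_2(\psi)|\lesssim(|\varphi|+|\psi|)^{q-2}|\varphi-\psi|$, valid since $q>2$.

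For the spatial part, note that $N\ge 3$ forces $1<r_2<N$, so the Sobolev inequality $\|\varphi\|_{L^{r_2^*}}\lesssim\|\nabla\varphi\|_{L^{r_2}}$ holds with $r_2^*=\tfrac{Nr_2}{N-r_2}=\tfrac{Nq}{N-2}$, and a direct computation gives $\tfrac1{r_2'}=\tfrac{q-2}{r_2^*}+\tfrac1{r_2}$. Therefore H\"older in space followed by this Sobolev inequality gives, for a.e.\ $t$,
\[
\|\nabla g_2(\varphi(t))\|_{r_2'}\lesssim\|\varphi(t)\|_{r_2^*}^{q-2}\|\nabla\varphi(t)\|_{r_2}\lesssim\|\nabla\varphi(t)\|_{r_2}^{q-1},
\]
and likewise $\|g_2(\varphi(t))-g_2(\psi(t))\|_{r_2'}\lesssim(\|\nabla\varphi(t)\|_{r_2}+\|\nabla\psi(t)\|_{r_2})^{q-2}\|\varphi(t)-\psi(t)\|_{r_2}$, this time keeping the last factor without a gradient, as dictated by the right-hand side of the lemma.

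The power of $T$ is then extracted by H\"older in the time variable. Since $q\in(2,2^*)$ and $2^*=2+\tfrac4{N-2}$, one has $q<p_2$, i.e.\ $(q-1)p_2'<p_2$, so H\"older on $(0,T)$ with exponent $\tfrac{p_2}{(q-1)p_2'}>1$ yields
\[
\Big(\int_0^T\|\nabla\varphi(t)\|_{r_2}^{(q-1)p_2'}\,dt\Big)^{1/p_2'}\le T^{\frac1{p_2'}-\frac{q-1}{p_2}}\,\|\nabla\varphi\|_{Y_{p_2,r_2,T}}^{q-1},
\]
and, for the difference, the analogous three--exponent H\"older (putting the two copies of $\|\nabla\varphi(t)\|_{r_2}+\|\nabla\psi(t)\|_{r_2}$ and the factor $\|\varphi(t)-\psi(t)\|_{r_2}$ in $L^{p_2}_t$, with the constant $1$ in the remaining slot) gives the same $T$--power times $(\|\nabla\varphi\|_{Y_{p_2,r_2,T}}^{q-2}+\|\nabla\psi\|_{Y_{p_2,r_2,T}}^{q-2})\|\varphi-\psi\|_{Y_{p_2,r_2,T}}$. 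One finally checks, from the definition of $p_2$, that $\tfrac1{p_2'}-\tfrac{q-1}{p_2}=1-\tfrac{q}{p_2}=1-\tfrac{(q-2)(N-2)}4=\tfrac{(N-2)(2^*-q)}4$, and combines the three steps to conclude. The only point that needs care is the bookkeeping of the H\"older exponents in space and time; the hypothesis $q\in(2,2^*)$ makes all of them admissible, and the rest of the argument is the local — hence easier — counterpart of Lemma \ref{lem5.2}.
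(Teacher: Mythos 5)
Your proof is correct, and all the exponent bookkeeping checks out: $(p_2,r_2)$ is admissible, $r_2^*=\tfrac{Nq}{N-2}$, $\tfrac{1}{r_2'}=\tfrac{q-2}{r_2^*}+\tfrac{1}{r_2}$, the condition $(q-1)p_2'<p_2$ is exactly $q<2^*$, and $1-\tfrac{q}{p_2}=\tfrac{(N-2)(2^*-q)}{4}$. One point of comparison with the source: the paper does not prove this lemma at all --- it states ``The following lemma is cited from \cite{JEANJEAN-JENDREJ}'' --- so you have supplied an argument where the author supplies only a reference. Your route (Strichartz via Lemma \ref{lem5.1}(ii) with the dual pair $(p_2',r_2')$, the pointwise bounds $|\nabla g_2(\varphi)|\lesssim|\varphi|^{q-2}|\nabla\varphi|$ and $|g_2(\varphi)-g_2(\psi)|\lesssim(|\varphi|+|\psi|)^{q-2}|\varphi-\psi|$, H\"older in space plus Sobolev, then H\"older in time to extract the factor $T^{\frac{(N-2)(2^*-q)}{4}}$) is the standard one used in the cited reference and deliberately parallels the paper's own proof of the companion Lemma \ref{lem5.2}, where no $T$--power is available precisely because $\bar p$ is critical. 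Nothing is missing; the only step worth stating explicitly in a write-up is that $\nabla$ commutes with $e^{i(t-s)\Delta}$ so that the gradient estimate follows from the $L^2$-level Strichartz inequality, which is implicit in how the statement is phrased.
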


Similarly to the proof of Lemma 3.7 in \cite{JEANJEAN-JENDREJ}, we
have the following result.
\begin{lemma}\label{lem5.4}
For all $R,T > 0$ the metric space $(B_{R,T} ,d)$ is complete, where
\begin{equation*}
B_{R,T}:=\{u\in X_T:\|u\|_{X_T}\leq R\}\ \ \mathrm{and}\ \
d(u,v):=\|u-v\|_{Y_T}.
\end{equation*}
\end{lemma}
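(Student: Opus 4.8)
The plan is to exploit the fact that $d$ is the metric induced by the \emph{weaker} norm $\|\cdot\|_{Y_T}$ on the bounded ball $B_{R,T}$ of the \emph{stronger and reflexive} space $X_T$; this is precisely the setting in which a closed ball of a Banach space, equipped with a coarser metric, is complete. First I would record the elementary structural facts. By \eqref{e5.4} and the definitions of the norms, $Y_T=L^{p_1}((0,T);L^{r_1}(\mathbb{R}^N))\cap L^{p_2}((0,T);L^{r_2}(\mathbb{R}^N))$ and $X_T=L^{p_1}((0,T);W^{1,r_1}(\mathbb{R}^N))\cap L^{p_2}((0,T);W^{1,r_2}(\mathbb{R}^N))$, each endowed with the sum of the two factor norms, so both are Banach spaces. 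Moreover, from \eqref{e5.2}--\eqref{e5.3} one checks that $p_1=2\bar{p}>2$ and $p_2=4q/((q-2)(N-2))>2$ are finite; since $(p_i,r_i)\in S$ this forces $r_i\in(2,2^*)$, so all four exponents lie in $(1,\infty)$ and hence $X_T$, a finite intersection of reflexive Bochner--Lebesgue spaces, is reflexive. Finally, since $\|v\|_r\le\|v\|_{W^{1,r}}$ one has $\|u\|_{Y_T}\le\|u\|_{X_T}$ for every $u\in X_T$, i.e. the inclusion $\iota\colon X_T\hookrightarrow Y_T$ is a bounded linear map.

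The argument then runs as follows. Given a sequence $\{u_n\}\subset B_{R,T}$ that is Cauchy for $d$, it is by definition Cauchy in the Banach space $Y_T$, so $u_n\to u$ strongly in $Y_T$ for some $u\in Y_T$; it remains to show $u\in B_{R,T}$. Since $\|u_n\|_{X_T}\le R$ for all $n$ and $X_T$ is reflexive, after passing to a subsequence I may assume $u_{n_k}\rightharpoonup w$ weakly in $X_T$ for some $w\in X_T$, and weak lower semicontinuity of the norm gives $\|w\|_{X_T}\le\liminf_{k\to\infty}\|u_{n_k}\|_{X_T}\le R$. Because $\iota$ is bounded and linear, it is weak--weak continuous, so $u_{n_k}\rightharpoonup w$ weakly in $Y_T$ as well; on the other hand $u_{n_k}\to u$ strongly, hence weakly, in $Y_T$. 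By uniqueness of weak limits $w=u$, whence $u\in X_T$ with $\|u\|_{X_T}\le R$, that is $u\in B_{R,T}$. Since also $d(u_n,u)=\|u_n-u\|_{Y_T}\to 0$, the Cauchy sequence converges in $(B_{R,T},d)$, and completeness follows.

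There is no substantial obstacle here: the proof is a routine assembly of standard functional-analytic facts (completeness of Bochner--Lebesgue spaces, reflexivity of $L^p((0,T);L^r(\mathbb{R}^N))$ for $1<p,r<\infty$, weak lower semicontinuity of norms, uniqueness of weak limits). The only point deserving a line of verification --- and the reason the statement is not the triviality ``a closed ball of a Banach space is complete'' --- is that $d$ is strictly weaker than $\|\cdot\|_{X_T}$, so reflexivity of $X_T$ must be invoked; for that one needs the explicit admissible pairs in \eqref{e5.2}--\eqref{e5.3} to have all exponents finite and away from the endpoints, which is immediate from the formulas. This mirrors the proof of Lemma~3.7 in \cite{JEANJEAN-JENDREJ}.
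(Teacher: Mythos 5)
Your proof is correct and follows essentially the same route as the paper, which simply defers to Lemma 3.7 of \cite{JEANJEAN-JENDREJ}: a $d$-Cauchy sequence converges strongly in $Y_T$, while boundedness in the reflexive space $X_T$ plus weak lower semicontinuity and uniqueness of weak limits places the limit back in $B_{R,T}$. The one point worth a line of justification, namely that the explicit pairs in (\ref{e5.2})--(\ref{e5.3}) have all exponents in $(1,\infty)$ so that $X_T$ is indeed reflexive, is checked correctly.
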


Now, we are ready to prove the following local existence result.

\begin{proposition}\label{pro5.1}
There exists $\gamma_0>0$ such that if $\varphi_0\in
H^1(\mathbb{R}^N)$ and $T\in(0,1]$ satisfy
\begin{equation}\label{e5.5}
\|e^{it\Delta}\varphi_0\|_{X_T}\leq \gamma_0,
\end{equation}
then there exists a unique integral solution $\varphi(t,x)$ to
(\ref{e5.1}) on the time interval $[0,T]$. Moreover $\varphi(t,x)\in
X_{p,r,T}$ for every $(p, r)\in S$ and satisfies the following
conservation laws:
\begin{equation}\label{e5.6}
E(\varphi(t))=E(\varphi_0),\ \|\varphi(t)\|_2=\|\varphi_0\|_2,\ \
\mathrm{for\ all}\ t\in[0,T].
\end{equation}
\end{proposition}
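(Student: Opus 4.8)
The plan is to solve the integral equation of Definition \ref{def5.2} by a Kato-type contraction mapping argument: the fixed point is carried out in the strong space $X_T$, but distances are measured in the weaker space $Y_T$, so that the completeness provided by Lemma \ref{lem5.4} applies and the nonlinear estimates of Lemmas \ref{lem5.2} and \ref{lem5.3} can be used with their natural exponents. Concretely, for $R>0$ and $T\in(0,1]$ to be fixed, I introduce
$$\Phi(\varphi)(t):=e^{it\Delta}\varphi_0-i\int_0^t e^{i(t-s)\Delta}g(\varphi(s))\,ds,\qquad g=g_1+g_2,$$
and first show $\Phi$ maps $B_{R,T}$ into itself. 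Since $\nabla$ commutes with $e^{i(t-s)\Delta}$, the $X_{p_i,r_i,T}$-norm of the Duhamel term is bounded by the $Y_{p_i,r_i,T}$-norms of $\int_0^t e^{i(t-s)\Delta}g(\varphi)\,ds$ and of $\int_0^t e^{i(t-s)\Delta}\nabla g(\varphi)\,ds$; the former is controlled via (\ref{e5.11}) with $\psi=0$ (using $g_1(0)=0$) and the analogous estimate of Lemma \ref{lem5.3} with $\psi=0$, the latter via (\ref{e5.10}) and the gradient estimate of Lemma \ref{lem5.3}. Together with hypothesis (\ref{e5.5}) for the free evolution and $T\le 1$, this gives
$$\|\Phi(\varphi)\|_{X_T}\le \gamma_0+C\|\varphi\|_{X_T}^{2\bar{p}-1}+CT^{\frac{(N-2)(2^*-q)}{4}}\|\varphi\|_{X_T}^{q-1}\le \gamma_0+CR^{2\bar{p}-1}+CR^{q-1}.$$
Taking $R=2\gamma_0$ and then $\gamma_0$ small enough that $C(2\gamma_0)^{2\bar{p}-1}+C(2\gamma_0)^{q-1}\le\gamma_0$ — possible since $2\bar{p}-1>1$ and $q-1>1$ — yields $\|\Phi(\varphi)\|_{X_T}\le R$.

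Next, for $\varphi,\psi\in B_{R,T}$, inequality (\ref{e5.11}) and the difference estimate of Lemma \ref{lem5.3} give
$$d(\Phi(\varphi),\Phi(\psi))=\Big\|\int_0^t e^{i(t-s)\Delta}\big(g(\varphi)-g(\psi)\big)\,ds\Big\|_{Y_T}\le \big(CR^{2\bar{p}-2}+CT^{\frac{(N-2)(2^*-q)}{4}}R^{q-2}\big)\,d(\varphi,\psi).$$
Because $2\bar{p}-2>0$ and $q-2>0$, shrinking $\gamma_0$ (hence $R$) and, if needed, $T$ makes the prefactor at most $\tfrac12$, so $\Phi$ is a contraction on the complete metric space $(B_{R,T},d)$ of Lemma \ref{lem5.4}. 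Banach's fixed point theorem produces a unique $\varphi\in B_{R,T}$ solving the integral equation; membership $\varphi\in C([0,T],H^1(\mathbb{R}^N))$ follows from the $C([0,T],H^1)$-statements in Lemma \ref{lem5.1}, and uniqueness among all integral solutions follows by applying the same contraction estimate on a possibly smaller interval (on which any integral solution lies in a small ball, by dominated convergence for its $X$-norm) together with a standard continuation argument. Running Lemma \ref{lem5.1}, (\ref{e5.10}), (\ref{e5.11}) and Lemma \ref{lem5.3} once more with an arbitrary admissible pair $(p,r)$ on the left-hand side gives $\varphi\in X_{p,r,T}$ for every $(p,r)\in S$.

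For the conservation laws (\ref{e5.6}), the formal identities $\tfrac{d}{dt}\|\varphi(t)\|_2^2=0$ and $\tfrac{d}{dt}E(\varphi(t))=0$ come from pairing the equation with $\bar\varphi$ (imaginary part) and with $\partial_t\bar\varphi$ (real part); these computations are not licit directly at the $H^1$ level for the Choquard nonlinearity, so I would first establish them for regularized initial data $\varphi_0^n\in H^2(\mathbb{R}^N)$ (or with a mollified/truncated nonlinearity), where the solution has enough regularity, and then pass to the limit using the continuous dependence in $X_T\cap C([0,T],H^1)$ that the contraction scheme furnishes. I expect the main obstacle to be precisely this last step — rigorously justifying the conservation laws — together with the bookkeeping required to close the nonlinear estimates simultaneously in the $X_T$ ball and the $Y_T$ metric; the fact that the relevant exponents satisfy $2\bar{p}-1,\,q-1>1$ (self-mapping) and $2\bar{p}-2,\,q-2>0$ (contraction) is exactly what makes the argument work, and is why the hypotheses $\bar{p}\ge 2$ and $\alpha<N-2$ (needed for Lemma \ref{lem5.2}) enter.
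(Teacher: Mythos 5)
Your proposal is correct and takes essentially the same route as the paper: the paper's proof of Proposition \ref{pro5.1} simply defers to the Kato-type contraction argument of Proposition 3.3 in \cite{JEANJEAN-JENDREJ} (a fixed point in the complete metric space $(B_{R,T},d)$ of Lemma \ref{lem5.4}, closed via the Strichartz and nonlinear estimates of Lemmas \ref{lem5.1}--\ref{lem5.3}) and to the regularization scheme of \cite{Ozawa 2006} for the conservation laws, which is exactly what you have spelled out in detail.
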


\begin{proof}
By modifying the proof of Proposition 3.3 in
\cite{JEANJEAN-JENDREJ}, we can show that there exists a unique
integral solution $\varphi(t,x)$ to (\ref{e5.1}) on the time
interval $[0,T]$ and $\varphi(t,x)\in X_{p,r,T}$ for every $(p,
r)\in S$. The proofs of the conservation laws (\ref{e5.6}) follow
the proofs of Propositions 1 and  2 in \cite{Ozawa 2006}, that can
be repeated mutatis mutandis in the context of (\ref{e5.1}).
\end{proof}

\subsection{Orbital stability}

\smallskip

Now we prove  Theorem \ref{thm6.2}.

\textbf{Proof of Theorem \ref{thm6.2}}. Since in the context of
(\ref{e5.1}), we have the local existence result (Proposition
\ref{pro5.1}),  the proof  of Theorem \ref{thm6.2} can be done by
repeating word by word Section 4 in \cite{JEANJEAN-JENDREJ} and we
omit it.

\subsection{Orbital instability}

In this subsection, we prove Theorem \ref{thm1.4}. For this aim, we
first give the following result.

\begin{lemma}\label{lem blow up}
Assume $N\geq 3$, $\alpha\in (0,N)$, $p=\bar{p}$, $q\in
(2,2+\frac{4}{N})$, $\alpha\geq N-4$ (i.e.,$\bar{p}\geq 2$) and
$\alpha< N-2$. Let $u\in S_a$ be such that
$E(u)<\inf_{\mathcal{P}_{a,-}}E(v)$ and let $\tau_u^-$ be the unique
global maximum point of $\Psi_u(\tau)$ determined in Lemma
\ref{lem6.6}. If $\tau_u^-<1$ and $|x|u\in L^2(\mathbb{R}^N)$, then
the solution $\psi(t,x)$ of (\ref{e1.2}) with initial value $u$
blows up in finite time.
\end{lemma}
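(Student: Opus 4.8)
The plan is to run a Glassey--type virial argument. By the local theory of Subsection~6.1 (Proposition~\ref{pro5.1}), the datum $u$ generates a maximal solution $\psi\in C([0,T_{\max}),H^1(\mathbb{R}^N))$ of \eqref{e1.2} satisfying the conservation laws $E(\psi(t))=E(u)$, $\|\psi(t)\|_2^2=a$ on $[0,T_{\max})$ and the blow-up alternative; the role of $\bar p\ge2$ (and $\alpha<N-2$) is precisely to make the nonlocal nonlinearity regular enough for this, see Lemma~\ref{lem5.2} and Remark~\ref{rmk1.1}. Since $|x|u\in L^2(\mathbb{R}^N)$, a standard truncation/persistence argument (working with $V_R(t)=\int\varphi_R|\psi(t)|^2$ and letting $R\to\infty$) gives $|x|\psi(t)\in L^2(\mathbb{R}^N)$ for $t<T_{\max}$, that $V(t):=\int_{\mathbb{R}^N}|x|^2|\psi(t,x)|^2\,dx$ is $C^2$, and the virial identity
\begin{equation*}
V''(t)=8\,P(\psi(t)),\qquad t\in[0,T_{\max}).
\end{equation*}
The coefficients collapse to exactly $P$ because $p=\bar p$ forces $N\bar p-N-\alpha=2\bar p$.

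Next I would show that the set $\mathcal{A}:=\{v\in S_a:\ E(v)<M(a),\ \tau_v^-<1\}$, with $M(a)=\inf_{\mathcal{P}_{a,-}}E$ (cf. Lemma~\ref{pro1.3}), is invariant under the flow. By hypothesis $u\in\mathcal A$, and the conservation laws give $\psi(t)\in S_a$ with $E(\psi(t))=E(u)<M(a)$; hence it suffices to prove $\tau_{\psi(t)}^-<1$ for all $t<T_{\max}$. The map $v\mapsto\tau_v^-$ is $C^1$ on $S_a$ by Lemma~\ref{lem6.6}(3) and $t\mapsto\psi(t)$ is $H^1$-continuous, so $t\mapsto\tau_{\psi(t)}^-$ is continuous with $\tau_{\psi(0)}^-<1$. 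If it ever reached the value $1$ at some $t_0$, then $\psi(t_0)=(\psi(t_0))_{\tau_{\psi(t_0)}^-}\in\mathcal{P}_a$ with $E(\psi(t_0))\ge\inf_{\partial V_a}E\ge0$ by Lemma~\ref{lem6.6}(2); but $E(\psi(t_0))=E(u)<M(a)$ rules out $\psi(t_0)\in\mathcal{P}_{a,-}$, and since $\mathcal{P}_a=\mathcal{P}_{a,+}\cup\mathcal{P}_{a,-}$ this forces $\psi(t_0)\in\mathcal{P}_{a,+}$, i.e. $E(\psi(t_0))<0$, a contradiction. Thus $\tau_{\psi(t)}^-<1$ throughout, and since $\Psi_{\psi(t)}'<0$ on $(\tau_{\psi(t)}^-,\infty)$ we get $P(\psi(t))=\Psi_{\psi(t)}'(1)<0$ for all $t<T_{\max}$.

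The decisive step is to upgrade $P(\psi(t))<0$ to a \emph{uniform} bound $P(\psi(t))\le-\beta<0$, which I would obtain from the estimate
\begin{equation*}
P(v)\le c\,(E(v)-M(a))\qquad\text{for all }v\in\mathcal A,
\end{equation*}
for a fixed $c>0$ (one may take $c=1$). Given $v\in\mathcal A$, put $\sigma:=1/\tau_v^->1$ and $w:=v_{\tau_v^-}$, so $w\in\mathcal{P}_{a,-}$, hence $P(w)=0$ and, since $E(w)>0$, $(\bar p-1)\int(I_\alpha\ast|w|^{\bar p})|w|^{\bar p}>\tfrac{\bar p(2-q\gamma_q)}{q}\,\mu\|w\|_q^q$; moreover $v=w_\sigma$, $E(v)=\Psi_w(\sigma)$, $E(v)-M(a)\le\Psi_w(\sigma)-\Psi_w(1)$ and $P(v)=\sigma\Psi_w'(\sigma)$. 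Using $P(w)=0$ to eliminate $\|\nabla w\|_2^2$, both sides of the desired inequality become explicit combinations of $\int(I_\alpha\ast|w|^{\bar p})|w|^{\bar p}$, $\|w\|_q^q$ and elementary powers of $\sigma$, and the inequality reduces to a one-variable inequality for $\sigma>1$ that can be checked using $\bar p>1$, $q\gamma_q<2<2\bar p$ and the displayed relation between $\int(I_\alpha\ast|w|^{\bar p})|w|^{\bar p}$ and $\|w\|_q^q$ (this is the fiber-map computation in the spirit of Soave and Bellazzini--Jeanjean). Granting it, and recalling $M(a)>0$ by Lemmas~\ref{lem7.23}--\ref{lem7.21}, we conclude $P(\psi(t))\le c(E(u)-M(a))=:-\beta<0$ for all $t<T_{\max}$.

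Finally, $V''(t)\le-8\beta$ on $[0,T_{\max})$, so $V(t)\le V(0)+V'(0)t-4\beta t^2$; as the right-hand side tends to $-\infty$ while $V(t)\ge0$, the solution cannot be global, so $T_{\max}<\infty$ and by the blow-up alternative $\psi$ blows up in finite time. The main obstacle is the quantitative estimate $P(v)\le c(E(v)-M(a))$ on $\mathcal A$: the virial identity and the invariance argument are routine once Lemma~\ref{lem6.6} and the local theory are available, and the structural hypotheses $\bar p\ge2$ and $\alpha<N-2$ enter only through the local well-posedness (Lemma~\ref{lem5.2}), not through the dynamical core of the proof.
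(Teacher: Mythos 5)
Your architecture coincides with the paper's: local theory from Proposition \ref{pro5.1}, conservation laws, flow-invariance of the set $\{v\in S_a: E(v)<M(a),\ \tau_v^-<1\}$, a uniform negative upper bound on $P(\psi(t))$ coming from an estimate of the form $P(v)\le c\,(E(v)-M(a))$, and the virial/concavity conclusion. The invariance and virial steps are sound (the paper runs the invariance through the continuity of $t\mapsto P(\psi(t))$ and the claim that $P(\psi(t_1))\le-\delta$ forces $\tau_{\psi(t_1)}^-<1$, rather than through the dichotomy $\mathcal{P}_a=\mathcal{P}_{a,+}\cup\mathcal{P}_{a,-}$; both versions work, yours using in addition that $\inf_{\mathcal{P}_{a,-}}E>0$ from Lemmas \ref{lem7.23} and \ref{lem7.21}).

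The genuine gap is the decisive estimate $P(v)\le c\,(E(v)-M(a))$: you reduce it to a one-variable inequality in $\sigma=1/\tau_v^-$ and assert that it ``can be checked'' from $\bar p>1$ and $q\gamma_q<2<2\bar p$, but that check is exactly the nontrivial content and is not a consequence of those exponent comparisons alone. Indeed, setting $G(\sigma):=\sigma\Psi_w'(\sigma)-\Psi_w(\sigma)+\Psi_w(1)$ one has $G(1)=P(w)=0$ and $G'(\sigma)=\sigma\Psi_w''(\sigma)$, so your one-variable inequality is literally equivalent to $\Psi_w''\le 0$ on $[\tau_w^-,\infty)$; after eliminating $\|\nabla w\|_2^2$ via $P(w)=0$, the $\|w\|_q^q$-contribution to $\Psi_w''$ has the unfavorable sign and must be absorbed into the $\bar p$-term using second-order information at $\tau_w^-$ (Lemma \ref{lem6.6}(3)) or your relation from $E(w)>0$, together with a monotonicity argument in $\sigma$ -- none of which is written out. (There is also a direction slip: what your chain needs is $\Psi_w(\sigma)-\Psi_w(1)\le E(v)-M(a)$, which holds because $\Psi_w(1)=E(w)\ge M(a)$, not the reverse as displayed.) The paper obtains the estimate with $c=1$ much more directly, by expanding $\Psi_u$ at $\tau=1$ with Lagrange remainder $\Psi_u''(\xi)(\tau_u^--1)^2$ at some $\xi\in(\tau_u^-,1)$ and discarding it by the same concavity:
\begin{equation*}
\inf_{\mathcal{P}_{a,-}}E(v)\le\Psi_u(\tau_u^-)\le\Psi_u(1)+\Psi_u'(1)(\tau_u^--1)=E(u)+P(u)(\tau_u^--1)\le E(u)-P(u),
\end{equation*}
using $P(u)<0$ and $\tau_u^->0$ in the last step. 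You should either adopt this short route or actually carry out the concavity verification on which your reduction silently rests; as written, the step you yourself flag as the main obstacle is not overcome.
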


\begin{proof}
We claim that
\begin{equation}\label{e8.15}
\mathrm{if}\ u\in S_a\ \mathrm{and}\ \tau_u^-\in (0,1),\
\mathrm{then}\ P(u)\leq E(u)-\inf_{\mathcal{P}_{a,-}}E(v).
\end{equation}
Indeed, by using the equality
\begin{equation*}
\Psi_u(\tau_u^-)=\Psi_u(1)+\Psi_u'(1)(\tau_u^--1)+\Psi_u''(\xi)(\tau_u^--1)^2,\
\ \mathrm{for\ some}\ \xi\in (\tau_u^-,1),
\end{equation*}
and noting that $P(u)<P(u_{\tau_u^-})=0$, $\Psi_u''(\xi)<0$ for
$\xi>\tau_u^-$, $\Psi_u'(1)=P(u)$ and $\Psi_u(1)=E(u)$, we obtain
that
\begin{equation*}
\inf_{\mathcal{P}_{a,-}}E(v)\leq \Psi_u(\tau_u^-)\leq E(u)-P(u),
\end{equation*}
which implies that (\ref{e8.15}) holds.

Now, let us consider the solution $\psi(t,x)$ with initial value
$u$. By Proposition \ref{pro5.1}, $\psi(t,x)\in C([0,T_{max}),
H^1(\mathbb{R}^N))$, where $T_{max}\in (0, +\infty]$ is the maximal
lifespan of $\psi(t,x)$. Since by assumption $\tau_u^-<1$, and the
map $u\mapsto \tau_u^-$ is continuous, we deduce that
$\tau_{\psi(t)}^-<1$ as well for $t$ small, say $t\in [0, t_1)$. By
(\ref{e8.15}), the assumption $E(u)<\inf_{\mathcal{P}_{a,-}}E(v)$,
and the conservation laws of mass and energy, we obtain that for
$t\in [0, t_1)$,
\begin{equation*}
P(\psi(t))\leq
E(\psi(t))-\inf_{\mathcal{P}_{a,-}}E(v)=E(u)-\inf_{\mathcal{P}_{a,-}}E(v)<-\delta.
\end{equation*}
Hence, $P(\psi(t_1))\leq -\delta$ and  then $\tau_{\psi(t_1)}^-<1$.
Hence, by continuity, the above argument yields
\begin{equation*}
P(\psi(t))\leq-\delta,\ \mathrm{for\ any}\ t\in
[0,T_{\mathrm{max}}).
\end{equation*}
To obtain a contradiction we recall that, since $|x|u\in
L^2(\mathbb{R}^N)$ by assumption, by the virial identity (see
Proposition 6.5.1 in \cite{Cazenave 2003}), the function
\begin{equation*}
\Phi(t):=\int_{\mathbb{R}^N}|x|^2|\psi(t,x)|^2dx
\end{equation*}
is of class $C^2$, with $\Phi''(t)=8 P(\psi(t))\leq -8\delta$ for
every $t\in [0,T_{\mathrm{max}})$. Therefore
\begin{equation*}
0\leq \Phi(t)\leq \Phi(0)+\Phi'(0)t-4\delta t^2\ \ \mathrm{for\
every}\ t\in [0,T_{\mathrm{max}}).
\end{equation*}
Since the right hand side becomes negative for $t$ large, this
yields  an upper bound on $T_{\mathrm{max}}$, which in turn implies
finite time blow up.
\end{proof}

\textbf{Proof of Theorem \ref{thm1.4}}. By Lemmas \ref{lem3.8} and
\ref{lem3.3}, $u$ satisfies (\ref{e1.5}) with some $\lambda<0$.
Next, we prove the strong instability of $e^{-i\lambda t}u(x)$. For
$s>1$, let $u_s:=s^{N/2}u(sx)$ and $\psi_s(t,x)$ be the solution to
(\ref{e1.2}) with initial value $u_s$. We have $u_s\to u$ strongly
in $H^1(\mathbb{R}^N)$ as $s\to 1^+$, and hence it is sufficiently
to prove that $\psi_s$ blows up in finite time. Let $\tau_{u_s}^-$
be defined by Lemma \ref{lem6.6}. Clearly $\tau_{u_s}^-=s^{-1}<1$,
and by the definition of $\tau_{u_s}^-$,
\begin{equation*}
E(u_s)<E((u_s)_{\tau_{u_s}^-})=E(u)=\inf_{\mathcal{P}_{a,-}}E(v).
\end{equation*}
By Theorem \ref{thm1.3} (3), $|x|u_s\in L^2(\mathbb{R}^N)$. Hence,
by Lemma \ref{lem blow up}, $\psi_s$ blows up in finite time. The
proof is complete.

\bigskip

\textbf{Acknowledgements.} This work is supported by the National
Natural Science Foundation of China (No. 12001403).



\end{document}